\title{Domain Theory in Constructive and Predicative Univalent Foundations}
\titlerunning{Domain Theory in Constructive and Predicative UF}
\author{Tom de Jong}
{University of Birmingham, United Kingdom
\and \url{https://www.cs.bham.ac.uk/~txd880}}
{t.dejong@pgr.bham.ac.uk}
{https://orcid.org/0000-0003-1585-3172}
{}
\author{Mart{\'{i}}n H{\"{o}}tzel Escard{\'{o}}}
{University of Birmingham, United Kingdom
\and \url{https://www.cs.bham.ac.uk/~mhe}}
{m.escardo@cs.bham.ac.uk}
{https://orcid.org/0000-0002-4091-6334}
{}
\authorrunning{T. de {Jong} and M.\,H. {Escard{\'{o}}}}
\keywords{domain theory, constructivity, predicativity, univalent foundations}
\theoremstyle{definition}
\newtheorem{construction}[theorem]{Construction}
\Crefname{construction}{Construction}{Constructions}
\DeclarePairedDelimiter{\paren}{(}{)}
\newcommand{\colonequiv}{\mathrel{\vcentcolon\mspace{-1.2mu}\equiv}}
\newcommand{\id}{\textup{\textsf{id}}}
\DeclareMathOperator{\Id}{\textup{\textsf{Id}}}
\DeclareMathOperator{\fst}{\textup{\textsf{pr}}_1}
\newcommand{\emptyt}{\mathbf{0}}
\newcommand{\unitt}{\mathbf{1}}
\newcommand{\Two}{\mathbf{2}}
\newcommand{\One}{\mathbf{1}}
\newcommand{\natt}{\mathbf{N}}
\DeclareMathOperator{\inl}{\textup{\textsf{inl}}}
\DeclareMathOperator{\inr}{\textup{\textsf{inr}}}
\DeclareMathOperator{\isprop}{\textup{\textsf{is-prop}}}
\newcommand{\hassize}{\mathrel{\textup{\textsf{has-size}}}}
\DeclarePairedDelimiter{\squash}{\|}{\|}
\DeclarePairedDelimiter{\tosquash}{|}{|}
\DeclareMathOperator{\Fin}{\textup{\textsf{Fin}}}
\newcommand{\locsmall}{\textup{\textsf{small}}}
\newcommand{\emptyisprop}{\textup{\textsf{\(\emptyt\)-is-prop}}}
\newcommand{\unitisprop}{\textup{\textsf{\(\unitt\)-is-prop}}}
\newcommand{\uniquefromempty}{\textup{\textsf{unique-from-\(\emptyt\)}}}
\newcommand{\appendixproof}[1]{}
\newcommand{\journalversion}[1]{}
\newcommand{\Omit}[1]{}
\newcommand{\U}{\mathcal U}
\newcommand{\V}{\mathcal V}
\newcommand{\W}{\mathcal W}
\newcommand{\T}{\mathcal T}
\newcommand{\DCPO}[3]{\mathcal {#1}\textup{-\textsf{DCPO}}_{\mathcal {#2,#3}}}
\newcommand{\DCPOnum}[3]{\U_{#1}\textup{-\textsf{DCPO}}_{\U_{#2},\U_{#3}}}
\newcommand{\order}{\sqsubseteq}
\newcommand{\below}{\order}
\DeclareMathOperator{\lifting}{\mathcal L}
\newcommand{\isdefined}{\mathpunct{\downarrow}}
\DeclareMathOperator{\isdirected}{\textup{\textsf{is-directed}}}
\DeclareMathOperator{\powerset}{\mathcal P}
\DeclareMathOperator{\List}{\textup{\textsf{List}}}
\DeclareMathOperator{\Idl}{\textup{\textsf{Idl}}}
\DeclareMathOperator{\isideal}{\textup{\textsf{is-ideal}}}
\DeclareMathOperator{\principalideal}{\downarrow}
\newcommand\twoheaddownarrow{\mathrel{\rotatebox[origin=c]{270}{$\twoheadrightarrow$}}}
\DeclareMathOperator{\ddarrow}{\twoheaddownarrow}
\newcommand{\dyadiccenter}{\textup{\textsf{center}}}
\DeclareMathOperator{\dyadicleft}{\textup{\textsf{left}}}
\DeclareMathOperator{\dyadicright}{\textup{\textsf{right}}}
\begin{document}

\maketitle

\begin{abstract}
  We develop domain theory in constructive univalent foundations
  without Voevodsky's resizing axioms. In previous work in this
  direction, we constructed the Scott model of PCF and proved its
  computational adequacy, based on directed complete posets
  (dcpos). Here we further consider algebraic and continuous dcpos,
  and construct Scott's \(D_\infty\) model of the untyped
  \(\lambda\)-calculus. A common approach to deal with size issues in
  a predicative foundation is to work with \emph{information systems}
  or \emph{abstract bases} or \emph{formal topologies} rather than
  dcpos, and \emph{approximable relations} rather than Scott
  continuous functions. Here we instead accept that dcpos may be large
  and work with type universes to account for this. For instance, in
  the Scott model of PCF, the dcpos have carriers in the second
  universe \(\U_1\) and suprema of directed families with indexing
  type in the first universe~\(\U_0\). Seeing a poset as a category in
  the usual way, we can say that these dcpos are large, but locally
  small, and have small filtered colimits.  In the case of algebraic
  dcpos, in order to deal with size issues, we proceed mimicking the
  definition of accessible category. With such a definition, our
  construction of Scott's~\(D_\infty\) again gives a large, locally
  small, algebraic dcpo with small directed suprema.
\end{abstract}

\section{Introduction}

In domain theory~\cite{AbramskyJung1994} one considers posets with suitable
completeness properties, possibly generated by certain elements called
\emph{compact}, or more generally generated by a certain \emph{way-below}
relation, giving rise to algebraic and continuous domains. As is well known,
domain theory has applications to programming language
semantics~\cite{Scott1993,Scott1972,Plotkin1977}, higher-type
computability~\cite{LongleyNormann2015}, topology, topological algebra and
more~\cite{GierzEtAl1980,GierzEtAl2003}.

In this work we explore the development of domain theory from the
univalent point of view~\cite{HoTTbook,UniMath}. This means that we
work with the stratification of types as singletons, propositions,
sets, 1-groupoids, etc., and that we work with univalence. At present,
higher inductive types other than propositional truncation are not
needed. Often the only consequences of univalence needed here are
functional and propositional extensionality. An exception is the
fundamental notion \emph{has size}: if we want to know that it is a
proposition, then univalence is necessary, but this knowledge is not
needed for our purposes (\cref{impredicativity}). Full details of our
univalent type theory are given in~\cref{foundations}.

Additionally, we work constructively (we
don't assume excluded middle or choice axioms) and predicatively (we don't assume
Voevodky's resizing principles~\cite{Voevodsky2011,Voevodsky2015,UniMath}, and
so, in particular, powersets are large). Most of the work presented here has
been formalized in the proof assistant
Agda~\cite{Agda,TypeTopology,TypeTopologyFork} (see \cref{conclusion}
for details). In our predicative setting, it is extremely important to
check universe levels carefully, and the use of a proof assistant such as Agda has been
invaluable for this purpose.

In previous work in this direction~\cite{deJong2019} (extended by
Brendan Hart~\cite{Hart2020}), we constructed the Scott model of PCF
and proved its computational adequacy, based on directed complete
posets (dcpos). Here we further consider algebraic and continuous
dcpos, and construct Scott's \(D_\infty\) model of the untyped
\(\lambda\)-calculus~\cite{Scott1972}.

A common approach to deal with size issues in a predicative foundation is to
work with \emph{information systems}~\cite{Scott1982}, \emph{abstract
  bases}~\cite{AbramskyJung1994} or \emph{formal
  topologies}~\cite{Sambin1987,CoquandEtAl2003} rather than dcpos, and
\emph{approximable relations} rather than (Scott) continuous functions. Here we
instead accept that dcpos may be large and work with type universes to account
for this. For instance, in our development of the Scott model of
PCF~\cite{Scott1993,Plotkin1977}, the dcpos have carriers in the second
universe~\(\U_1\) and suprema of directed families with indexing type in the
first universe~\(\U_0\). Seeing a poset as a category in the usual way, we can
say that these dcpos are large, but locally small, and have small filtered
colimits.  In the case of algebraic dcpos, in order to deal with size issues, we
proceed mimicking the definition of accessible
category~\cite{MakkaiPare1989}. With such a definition, our construction of
Scott's \(D_\infty\) again gives a large, locally small, algebraic dcpo with
small directed suprema.

  \paragraph*{Organization}
  \emph{\cref{foundations}}: Foundations.
  \emph{\cref{impredicativity}}: (Im)predicativity.
  \emph{\cref{basic:domain:theory}}: Basic domain theory, including
  directed complete posets, continuous functions, lifting,
\(\Omega\)-completeness, exponentials, powersets as dcpos.
\emph{\cref{D:infty}}: Limit and colimits of dcpos, Scott's
\(D_\infty\).
\emph{\cref{continuous:and:algebraic:dcpos}}: Way-below relation, bases, compact
element, continuous and algebraic dcpos, ideal completion, retracts,
examples.
\emph{\cref{conclusion}}: Conclusion and
future work.

\paragraph*{Related Work}

Domain theory has been studied predicatively in the setting of
\emph{formal topology}~\cite{Sambin1987,CoquandEtAl2003} in
\cite{SambinValentiniVirgili1996,Negri1998,Negri2002,MaiettiValentini2004}
and the more recent categorical paper \cite{Kawai2020}. In this
predicative setting, one avoids size issues by working with abstract
bases or formal topologies rather than dcpos, and approximable
relations rather than Scott continuous functions. Hedberg
\cite{Hedberg1996} presented these ideas in Martin-L\"of Type Theory
and formalized them in the proof assistant ALF. A~modern formalization
in Agda based on Hedberg's work was recently carried out in
Lidell's master thesis~\cite{Lidell2020}.

Our development differs from the above line of work in that it studies
dcpos directly and uses type universes to account for the fact that
dcpos may be large.
There are two Coq formalizations of domain theory in this direction
\cite{BentonKennedyVarming2009,Dockins2014}. Both formalizations study
\(\omega\)-chain complete preorders, work with setoids, and make use of Coq's
impredicative sort~\texttt{Prop}.
Our development avoids the use of setoids thanks to the adoption
of the univalent point of view. Moreover, we work predicatively and we
work with directed sets rather than \(\omega\)-chains, as we intend
our theory to be also applicable to topology and
algebra~\cite{GierzEtAl1980,GierzEtAl2003}.

There are also constructive accounts of domain theory aimed at program
extraction \cite{BauerKavkler2009,PattinsonMohammadian2020}.
Both \cite{BauerKavkler2009} and \cite{PattinsonMohammadian2020} study
\(\omega\)-chain complete posets (\(\omega\)-cpos) and define notions
of \(\omega\)-continuity for them. Interestingly, Bauer and Kavkler
\cite{BauerKavkler2009} note that there can only be non-trivial
examples of \(\omega\)-continuous \(\omega\)-cpos when Markov's
Principle holds \cite[Proposition 6.2]{BauerKavkler2009}. This leads
the authors of \cite{PattinsonMohammadian2020} to weaken the
definition of \(\omega\)-continuous \(\omega\)-cpo by using the
double negation of existential quantification in the definition of the
way-below relation \cite[Remark 3.2]{PattinsonMohammadian2020}. In
light of this, it is interesting to observe that when we study
directed complete posets (dcpos) rather than \(\omega\)-cpos, and
continuous dcpos rather than \(\omega\)-continuous \(\omega\)-cpos, we
can avoid Markov's Principle or a weakened notion of the way-below
relation to obtain non-trivial continuous dcpos (see for instance
\Cref{powerset-is-algebraic,lifting-is-algebraic,ideal-completion-of-dyadics}).

Another approach is the field of \emph{synthetic domain
  theory}~\cite{Rosolini1986,Rosolini1987,Hyland1991,Reus1999,ReusStreicher1999}. Although
the work in this area is constructive, it is still impredicative, based on topos
logic, but more importantly it has a focus different from that of regular domain
theory: the aim is to isolate a few basic axioms and find models in
(realizability) toposes where ``every object is a domain and every morphism is
continuous''. These models often validate additional axioms, such as Markov's
Principle and countable choice, and moreover falsify excluded middle.  Our
development has a different goal, namely to develop regular domain theory
constructively and predicatively, but in a foundation compatible with excluded
middle and choice, while not relying on them or Markov's Principle or countable
choice.

\section{Foundations} \label{foundations}
We work in intensional Martin-L\"of Type Theory with type formers
\(+\)~(binary sum), \(\Pi\)~(dependent products),
\(\Sigma\)~(dependent sum), \(\Id\) (identity type),
and inductive types, including~\(\emptyt\)~(empty type),
\(\unitt\)~(type with exactly one element \(\star : \unitt\)),
\(\natt\)~(natural numbers). Moreover, we have type universes (for
which we typically write \(\U\), \(\V\), \(\W\) or \(\T\)) with the
following closure conditions.
We assume a universe \(\mathcal U_0\) and two operations: for every universe
\(\U\) a successor universe \(\U^+\) with \(\U : \U^+\), and for every two
universes \(\U\) and \(\V\) another universe \(\U \sqcup \V\) such that for any
universe \(\U\), we have \(\U_0 \sqcup \U \equiv \U\) and
\(\U \sqcup \U^+ \equiv \U^+\). Moreover, \((-)\sqcup(-)\) is idempotent,
commutative, associative, and \((-)^+\) distributes over \((-)\sqcup(-)\). We
write \(\U_1 \colonequiv \U_0^+\), \(\U_2 \colonequiv \U_1^+, \dots\) and so on.
If \(X : \U\) and \(Y : \V\), then \({X + Y} : \U \sqcup \V\) and if \(X : \U\)
and \(Y : X \to \V\), then the types \(\Sigma_{x : X} Y(x)\) and
\(\Pi_{x : X} Y(x)\) live in the universe
$\U \sqcup \V\); finally, if~\(X : \U\) and \(x,y : X\), then
\(\Id_{X}(x,y) : \U\). The type of natural numbers \(\natt\) is assumed to be in
\(\U_0\) and we postulate that we have copies \(\emptyt_{\U}\) and
\(\unitt_{\U}\) in every universe \(\U\).  All our examples go through with just
two universes \(\U_0\) and \(\U_1\), but the theory is more easily developed in
a general setting.

In general we adopt the same conventions of~\cite{HoTTbook}.  In particular, we
simply write \(x=y\) for the identity type \(\Id_{X}(x,y)\) and use \(\equiv\)
for the judgemental equality, and for dependent functions
\(f,g : \Pi_{x : X}A(x)\), we write \(f \sim g\) for the pointwise equality
\(\Pi_{x : X} f(x) = g(x)\).

Within this type theory, we adopt the univalent point of
view~\cite{HoTTbook}.
A type \(X\) is a \emph{proposition} (or \emph{truth value} or
\emph{subsingleton}) if it has at most one element, i.e.\ the type
\(\isprop(X) \colonequiv \prod_{x,y : X} x = y\) is inhabited.
A major difference between univalent foundations and other foundational systems
is that we \emph{prove} that types are propositions or properties. For~instance,
we can show (using function extensionality) that the axioms of directed complete
poset form a proposition.
  A type \(X\) is a \emph{set} if any two elements can be identified in at most
  one way, i.e.\ the type \(\prod_{x,y : X} \isprop(x = y)\) is inhabited.

We will assume two extensionality principles:
\begin{enumerate}[(i)]
\item \emph{Propositional extensionality}: if \(P\) and \(Q\) are two
  propositions, then we postulate that \(P = Q\) exactly when both \(P \to Q\)
  and \(Q \to P\) are inhabited.
\item \emph{Function extensionality}: if \(f,g : \prod_{x : X}A(x)\) are two
  (dependent) functions, then we postulate that \(f = g\) exactly when
  \(f \sim g\).
\end{enumerate}
Function extensionality has the important consequence that the propositions form
an exponential ideal, i.e.\ if \(X\) is a type and \(Y : X \to \U\) is such that
every \(Y(x)\) is a proposition, then so is \(\Pi_{x : X}Y(x)\). In light of
this, universal quantification is given by \(\Pi\)-types in our type~theory.

In Martin-L\"of Type Theory, an element of
\(\prod_{x : X}\sum_{y : Y}\phi(x,y)\), by definition, gives us a function
\(f : X \to Y\) such that \(\prod_{x : X}\phi(x,f(x))\). In some cases, we wish
to express the weaker ``for every \(x : X\), there exists some \(y : Y\) such
that \(\phi(x,y)\)'' without necessarily having an assignment of \(x\)'s to
\(y\)'s. A good example of this is when we define directed families later (see
\cref{directed-family}). This is achieved through the propositional truncation.

Given a type \(X : \U\), we postulate that we have a proposition
\(\squash*{X} : \U\) with a function \({\tosquash{-} : X \to \squash*{X}}\) such
that for every proposition \(P : \V\) in any universe \(\V\), every function
\(f : X \to P\) factors (necessarily uniquely, by function extensionality)
through \(\tosquash{-}\).
Diagrammatically,
\begin{equation*}
  \begin{tikzcd}
    X \ar[dr, "\tosquash*{-}"'] \ar[rr, "f"] & & P \\
    & \squash*{X} \ar[ur, dashed]
  \end{tikzcd}
\end{equation*}
Existential quantification \(\exists_{x : X}Y(x)\) is given by
\(\squash*{\Sigma_{x : X}Y(x)}\). One should note that if we have
\(\exists_{x : X}Y(x)\) and we are trying to prove some proposition \(P\), then
we may assume that we have \(x : X\) and \(y : Y(x)\) when constructing our
inhabitant of \(P\). Similarly, we can define disjunction as
\(P \lor Q \colonequiv \squash*{P + Q}\).

\section{Impredicativity} \label{impredicativity}
We now explain what we mean by (im)predicativity in univalent foundations.

\begin{definition}[Has size, \href{https://www.cs.bham.ac.uk/~mhe/HoTT-UF-in-Agda-Lecture-Notes/index.html\#_has-size_}{\texttt{has-size}} \textnormal{in} \cite{Escardo2020}]
  A type \(X : \U\) is said to \emph{have size} \(\V\) for some universe \(\V\)
  when we have \(Y : \V\) that is equivalent to \(X\), i.e.\
  \(X \hassize \V \colonequiv \sum_{Y : \V} Y \simeq X\).
\end{definition}
Here, the symbol \(\simeq\) refers to Voevodsky's notion of
equivalence \cite{Escardo2020,HoTTbook}. Notice that the type
\(X \hassize \V\) is a proposition if and only if the univalence axiom
holds~\cite{Escardo2020}.

\begin{definition}[Type of propositions \(\Omega_{\U}\)]
  The type of propositions in a universe \(\U\) is
  \(\Omega_{\U} \colonequiv \sum_{P : \U} \isprop(P) : \U^+\).
\end{definition}

Observe that \(\Omega_{\U}\) itself lives in the successor universe
\(\U^+\). We often think of the types in some fixed universe \(\U\) as
\emph{small} and accordingly we say that \(\Omega_{\U}\) is
\emph{large}.
Similarly, the powerset of a type \(X : \U\) is large.  Given our
predicative setup, we must pay attention to universes when considering
powersets.

  \begin{definition}[\(\V \)-powerset \(\powerset_{\V }(X)\), \(\V \)-subsets]
    Let \(\V \) be a universe and \(X : \U \) type. We~define the
    \emph{\(\V \)-powerset} \(\powerset_{\V }(X)\) as
    \(X \to \Omega_{\V} : \V^+\sqcup \U \). Its elements are called
    \emph{\(\V \)-subsets} of \(X\).
  \end{definition}
  \begin{definition}[\(\in,\subseteq\)]
    Let \(x\) be an element of a type \(X\) and let \(A\) be an
    element of \(\powerset_{\V }(X)\). We write \(x \in A\) for the
    type \(\fst\paren*{A(x)}\).  The first projection \(\fst\) is
    needed because \(A(x)\), being of type \(\Omega_\V\), is a
    pair. Given two \(\V \)-subsets \(A\) and \(B\) of \(X\), we write
    \(A \subseteq B\) for
    \(\prod_{x : X}\paren*{x \in A \to x \in B}\).
  \end{definition}
Functional and propositional extensionality imply that \(A=B\) \(\iff\) \(A \subseteq B\) and \(B \subseteq A\).

  \begin{definition}[Total type \(\mathbb T(A)\)]
    Given a \(\V \)-subset \(A\) of a type \(X\), we write \(\mathbb T(A)\) for
    the \emph{total type} \(\sum_{x : X} x \in A\).
  \end{definition}

One could ask for a \emph{resizing axiom} asserting that
\(\Omega_{\U}\) has size \(\U\), which we call \emph{the propositional
  impredicativity of \(\U\)}. A closely related axiom is
\emph{propositional resizing}, which asserts that every proposition
\(P : \U^+\) has size \(\U\). Without the addition of such resizing
axioms, the type theory is said to be \emph{predicative}.  As an
example of the use of impredicativity in mathematics, we mention that
the powerset has unions of arbitrary subsets if and only if
propositional resizing holds
\cite[\href{https://www.cs.bham.ac.uk/~mhe/HoTT-UF-in-Agda-Lecture-Notes/HoTT-UF-Agda.html\#powerset-union-existence.existence-of-unions-gives-PR}{\texttt{existence-of-unions-gives-PR}}]{Escardo2020}.

We mention that the resizing axioms are actually theorems when classical logic
is assumed. This is because if \(P \lor \lnot P\) holds for every proposition in
\(P : \U\), then the only propositions (up to equivalence) are \(\emptyt_{\U}\)
and \(\unitt_{\U}\), which have equivalent copies in \(\U_0\), and
\(\Omega_{\U}\) is equivalent to a type \(\Two_{\U} : \U\) with exactly two
elements. The existence of a computational interpretation of propositional
impredicativity axioms for univalent foundations is an open problem,
however~\cite{Uemura2019,Swan2019}.

\section{Basic Domain Theory} \label{basic:domain:theory}
Our basic ingredient is the notion of \emph{directed complete poset}
(dcpo). In set-theoretic foundations, a dcpo can be defined to be a
poset that has least upper bounds of all directed subsets. A naive
translation of this to our foundation would be to proceed as
follows. Define a poset in a universe \(\U\) to be a type \(P:\U\)
with a reflexive, transitive and antisymmetric relation
\(-\below- : P \times P \to \U\).  According to the univalent point of
view, we also require that the type \(P\) is a \emph{set} and the
values \(p \below q\) of the order relation are
\emph{subsingletons}. Then we could say that the poset \((P,\below)\)
is \emph{directed complete} if every directed family \(I \to P\) with
indexing type \(I : \U\) has a least upper bound. The problem with
this definition is that there are no interesting examples in our
constructive and predicative setting. For instance, assume that the
poset~$\Two$ with two elements \(0\below 1\) is directed complete, and
consider a proposition~\(A:\U\) and the directed family
\(A +  \One \to \Two\) that maps the left component to~\(1\) and the
right component to~\(0\). By case analysis on its hypothetical
supremum, we conclude that the negation of \(A\) is decidable. This
amounts to weak excluded middle, which is known to be equivalent to De
Morgan's Law, and doesn't belong to the realm of constructive
mathematics. To try to get an example, we may move to the poset
\(\Omega_0\) of propositions in the universe \(\U_0\), ordered by
implication. This poset does have all suprema of families
\(I \to \Omega_0\) indexed by types \(I\) in the \emph{first universe}
\(\U_0\), given by existential quantification. But if we consider a
directed family \(I \to \Omega_0\) with \(I\) in the \emph{same
  universe} as \(\Omega_0\) lives, namely the \emph{second universe}
\(\U_1\), existential quantification gives a proposition in the
\emph{second universe} \(\U_1\) and so doesn't give its supremum. In this
example, we get a poset such that
\begin{enumerate}[(i)]
\item the carrier lives in the universe \(\U_1\),
\item the order has truth values in the universe \(\U_0\), and
\item suprema of directed families indexed by types in \(\U_0\) exist.
\end{enumerate}
Regarding a poset as a category in the usual way, we have a large, but
locally small, category with small filtered colimits (directed suprema). This
is typical of all the examples we have considered so far in practice,
such as the dcpos in the Scott model of PCF and Scott's \(D_\infty\)
model of the untyped \(\lambda\)-calculus. We may say that the
predicativity restriction increases the universe usage by one.
However, for the sake of generality, we formulate our definition of
dcpo with the following universe conventions:
\begin{enumerate}[(i)]
\item the carrier lives in a universe \(\U\),
\item the order has truth values in a universe \(\T\), and
\item suprema of directed families indexed by types in a universe \(\V\) exist.
\end{enumerate}
So our notion of dcpo has three universe parameters \(\U,\V,\T\). We
will say that the dcpo is \emph{locally small} when \(\T\) is not
necessarily the same as \(\V\), but the order has truth values of
size~\(\V\). Most of the time we mention \(\V\) explicitly and leave
\(\U\) and \(\T\) to be understood from the context.

\begin{definition}[Poset]
  A \emph{poset} \((P,\sqsubseteq)\) is a set \(P : \U \) together with a
  proposition-valued binary relation \({\sqsubseteq} : {P \to P \to \T} \)
  satisfying:
  \begin{enumerate}[(i)]
  \item \emph{reflexivity}: \(\prod_{p:P} p \sqsubseteq p\);
  \item \emph{antisymmetry}:
    \(\prod_{p,q:P} p \sqsubseteq q \to q \sqsubseteq p \to p = q\);
  \item \emph{transitivity}:
    \(\prod_{p,q,r:P} p \sqsubseteq q \to q \sqsubseteq r \to p \sqsubseteq r\).
  \end{enumerate}
\end{definition}

\begin{definition}[Directed family]
  \label{directed-family}
  Let \((P,\order)\) be a poset and \(I\) any type. A family
  \(\alpha : I \to P\) is \emph{directed} if it is inhabited (i.e.\
  \(\squash*{I}\) is pointed) and
  \(\Pi_{i,j : I}\exists_{k : I} \alpha_i \order \alpha_k \times \alpha_j \order
  \alpha_k\).
\end{definition}
\begin{definition}[\(\V \)-directed complete poset, \(\V \)-dcpo]
  Let \(\V \) be a type universe. A \emph{\(\V \)-directed
    complete poset} (or \emph{\(\V \)-dcpo}, for short) is a
  poset \(\paren*{P,\order}\) such that every directed family \(I \to P\) with
  \(I : \V \) has a supremum in \(P\).
\end{definition}
We will sometimes leave the universe \(\V \) implicit, and simply speak
of ``a dcpo''. On other occasions, we need to carefully keep track of universe
levels. To this end, we make the following definition.
\begin{definition}[\(\DCPO{V}{U}{T}\)]
  Let \(\V\), \(\U\) and \(\T \) be universes. We write
  \(\DCPO{V}{U}{T}\) for the type of \(\V \)-dcpos with carrier in
  \(\U \) and order taking values in \(\T \).
\end{definition}
\begin{definition}[Pointed dcpo]
  A dcpo \(D\) is \emph{pointed} if it has a least element, which we
  will denote by \(\bot_{D}\), or simply \(\bot\).
\end{definition}
\begin{definition}[Locally small]
  A \(\V \)-dcpo \(D\) is \emph{locally small} if we have
  \({\order_\locsmall} : D \to D \to \V\) such that
  \(\prod_{x,y : D} \paren*{x \order_\locsmall y} \simeq \paren*{x \order_{D}
    y}\).
\end{definition}

\begin{example}[Powersets as pointed dcpos]
  Powersets give examples of pointed dcpos.
  The subset inclusion \(\subseteq\) makes \(\powerset_{\V}(X)\) into a poset
  and given a (not necessarily directed) family
  \(A_{(-)} : I \to \powerset_{\V }(X)\) with \(I : \V \), we may consider its
  supremum in \(\powerset_{\V }(X)\) as given by
  \(\lambda x . \exists_{i : I}\, x \in A_i\). Note that
  \(\paren*{\exists_{i : I}\, x\in A_i} : {\V }\) for every \(x : X\), so this
  is well-defined. Finally, \(\powerset_{\V }\) has a least element, the empty
  set: \(\lambda x . \emptyt_{\V }\). Thus,
  \(\powerset_{\V}(X) : \DCPO{V}{V^+\sqcup U}{V\sqcup U}\). When
  \(\V \equiv \U\) (as in \cref{powerset-is-algebraic}), we get the simpler,
  locally small \(\powerset_{\U}(X) : \DCPO{U}{U^+}{U}\).\lipicsEnd
\end{example}
Fix two \(\V \)-dcpos \(D\) and \(E\).
\begin{definition}[Continuous function]
  A function \(f : D \to E\) is \emph{(Scott) continuous} if
  it preserves directed suprema, i.e.\ if \(I : \V \) and
  \(\alpha : I \to D\) is directed, then \(f\paren*{\bigsqcup \alpha}\)
  is the supremum in \(E\) of the family \(f \circ \alpha\).
\end{definition}
\begin{lemma}\label{continuous-implies-monotone}
  If \(f : D \to E\) is continuous, then it is monotone, i.e.\
  \(x \order_{D} y\) implies \(f(x) \order_{E} f(y)\).
\end{lemma}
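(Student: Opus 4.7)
The plan is to exhibit, for each pair $x \order_D y$, a small directed family in $D$ whose supremum is $y$ and which contains $x$, then transport the inequality across $f$ using continuity.

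Concretely, given $x \order_D y$, I would take the indexing type to be a two-element copy $\Two_{\V} : \V$ (ensuring the index lies in the required universe $\V$) and define $\alpha : \Two_{\V} \to D$ by $\alpha(0) \colonequiv x$ and $\alpha(1) \colonequiv y$. I would then verify that $\alpha$ is directed in the sense of \cref{directed-family}: it is inhabited because $\Two_{\V}$ is (so $\tosquash{\Two_{\V}}$ is pointed), and for any $i,j : \Two_{\V}$ the element $\alpha(1) = y$ is an upper bound of $\alpha_i$ and $\alpha_j$ by reflexivity together with the hypothesis $x \order y$. Next I would observe that $\bigsqcup \alpha = y$: indeed $y$ is an upper bound of $\{x, y\}$, and any other upper bound $u$ satisfies $u \sqsupseteq \alpha(1) = y$, so antisymmetry forces the supremum to be $y$.

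Applying continuity of $f$ to this directed family gives $f(y) = f\paren*{\bigsqcup \alpha} = \bigsqcup (f \circ \alpha)$ in $E$. Since $f(x) = (f \circ \alpha)(0)$ is one of the elements of the family $f \circ \alpha$, it lies below the supremum, yielding $f(x) \order_E f(y)$, as required.

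The argument is essentially the classical one; the only subtlety worth flagging is predicative hygiene, namely that the indexing type must live in $\V$. Using the universe-polymorphic copy $\Two_{\V}$ postulated in \cref{foundations} handles this cleanly, so I do not anticipate a genuine obstacle. An alternative would be to index by $\unitt_{\V} + \unitt_{\V}$, which lies in $\V$ by the closure rules for $+$ and the universe-indexed unit types.
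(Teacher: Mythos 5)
Your proof is correct and takes essentially the same approach as the paper's, which indexes the family by \(\unitt + \unitt\) (your noted alternative) sending \(\inl(\star) \mapsto x\) and \(\inr(\star) \mapsto y\), observes that the supremum is \(y\), and applies continuity.
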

  \begin{proof}
  Given \(x,y : D\) with \(x \order y\), consider the directed family
  \(\unitt + \unitt \to D\) defined as \(\inl(\star) \mapsto x\) and
  \(\inr(\star) \mapsto y\). Its supremum is \(y\) and \(f\) must preserve it,
  so \(f(x) \order f(y)\).
\end{proof}
\begin{lemma}
  If \(f : D \to E\) is continuous and
  \(\alpha : I \to D\) is directed, then so is \(f \circ \alpha\).
\end{lemma}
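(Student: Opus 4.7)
The plan is to unfold the definition of directedness and verify both clauses, reducing the non-trivial one to monotonicity of $f$, which we already have from \cref{continuous-implies-monotone}.

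First I would observe that the index type $I$ for $f \circ \alpha$ is the same as for $\alpha$, so inhabitedness of $\squash*{I}$ is inherited directly from the assumption that $\alpha$ is directed. This takes care of the first conjunct of \cref{directed-family} with no work.

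Next I would address the semidirectedness condition: given $i,j : I$, I need to exhibit (in the propositionally truncated sense) some $k : I$ such that $f(\alpha_i) \order_E f(\alpha_k)$ and $f(\alpha_j) \order_E f(\alpha_k)$. Since the goal is a proposition, I may eliminate the truncation in the hypothesis: by directedness of $\alpha$ I obtain $k : I$ with $\alpha_i \order_D \alpha_k$ and $\alpha_j \order_D \alpha_k$. Applying \cref{continuous-implies-monotone} to $f$ twice then yields $f(\alpha_i) \order_E f(\alpha_k)$ and $f(\alpha_j) \order_E f(\alpha_k)$, and I return this $k$ (wrapped in $\tosquash*{-}$) as the required witness.

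There is no real obstacle here; the only subtlety worth flagging is that we do not need $f$ to preserve suprema for this result, only monotonicity, so the lemma could be stated more generally for monotone maps. The use of the preceding lemma is therefore the single substantive step, with the rest being routine manipulation of the truncation.
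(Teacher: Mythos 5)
Your proof is correct and matches the paper's approach, which is the one-line remark ``Using \cref{continuous-implies-monotone}''; you have simply spelled out the routine unfolding of the two clauses of \cref{directed-family}, the elimination of the truncation (valid because the goal is a proposition), and the two applications of monotonicity. Your closing observation that only monotonicity, not full continuity, is needed is also accurate.
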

  \begin{proof}
  Using \cref{continuous-implies-monotone}.
\end{proof}
\begin{definition}[Strict function]
  Suppose that \(D\) and \(E\) are pointed. A continuous
  function \(f : D \to E\) is \emph{strict} if
  \(f\paren*{\bot_{D}} = \bot_{E}\).
\end{definition}
\subsection{Lifting}
\begin{construction}[\(\lifting_{\V }(X)\), \(\eta_X\),
  \textnormal{cf.}~\cite{deJong2019,
    EscardoKnapp2017}]\label{construction-lifting}
  Let \(X : \U \) be a set. For any universe \(\V \), we construct
  a pointed \(\V \)-dcpo
  \(\lifting_{\V }(X) : \DCPO{V}{V^+\sqcup U}{V^+\sqcup U}\), known as
  the \emph{lifting} of \(X\). Its carrier is given by the type
  \( \sum_{P : \V }\isprop(P) \times (P \to X) \) of \emph{partial elements} of~\(X\).

  Given a partial element \(\paren*{P,i,\varphi} : \lifting_{\V }(X)\), we write
  \((P,i,\varphi) \isdefined\) for \(P\) and say that the partial element is
  defined if \(P\) holds. Moreover, we often leave the second component
  implicit, writing \((P,\varphi)\) for \((P,i,\varphi)\).

  The order is given by
  \( l \sqsubseteq_{\lifting_{\V }(X)} m \colonequiv \paren*{{l\isdefined} \to
    {l = m}} \), and it has a least element given by \((\emptyt, \emptyisprop,\)
  \(\uniquefromempty)\) where \(\emptyisprop\) is a witness that the empty type
  is a proposition and \(\uniquefromempty\) is the unique map from the empty
  type.

  Given a directed family
  \(\paren*{Q_{(-)},\varphi_{(-)}} : I \to \lifting_{\V }(X)\), its
  supremum is given by \(\paren*{\exists_{i : I} Q_i , \psi}\), where \(\psi\)
  is such that
  \[
    \begin{tikzcd}
      \sum_{i : I} Q_i \ar[dr,"\tosquash{-}"'] \ar{rr}{(i, q) \mapsto \varphi_i(q)} & & D \\
      & \exists_{i : I}Q_i \ar[ur, "\psi"']
    \end{tikzcd}
  \]
  commutes. (This is possible, because the top map is weakly constant (i.e.\ any
  of its values are equal) and \(D\) is a set~\cite[Theorem~5.4]{KrausEtAl2017}.)

  Finally, we write \(\eta_X : X \to \lifting_{\V }(X)\) for the embedding
  \(x \mapsto \paren*{\unitt, \unitisprop, \lambda u.x} \). \lipicsEnd
\end{construction}
Note that we require \(X\) to be a set, so that \(\lifting_{\V }(X)\) is
a poset, rather than an \(\infty\)-category.
In~practice, we often have \(\V \equiv \U \) (see for instance
\cref{lifting-is-algebraic}, \cref{Scott's-example-using-self-exponentiation},
or the Scott model of PCF \cite{deJong2019}), but we develop the theory for the
more general case.
We can describe the order on \(\lifting_{\V }(X)\) more explicitly, as
follows.
\begin{lemma}\label{lifting-order-alt}
  If we have elements \((P,\varphi)\) and \((Q,\psi)\) of
  \(\lifting_{\V }(X)\), then \((P,\varphi) \order (Q,\psi)\) holds if
  and only if we have \(f : P \to Q\) such that
  \(\prod_{p : P}\varphi(p) = \psi(f(p))\).
\end{lemma}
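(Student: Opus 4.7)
The strategy is to unfold the definition of \(\sqsubseteq\) on \(\lifting_\V(X)\) and translate an equality of partial elements (which is an equality in a \(\Sigma\)-type) into the more concrete datum of a function \(f : P \to Q\) together with a pointwise equation.

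For the forward direction, assume \(h : P \to (P,\varphi) = (Q,\psi)\). Given \(p : P\), the equation \(h(p)\) in the \(\Sigma\)-type \(\sum_{P : \V}\isprop(P) \times (P \to X)\) yields, by projecting onto the first component, an identification \(e_p : P = Q\); the propositionality component is automatic since \(\isprop\) is itself proposition-valued, and the third component gives, after suitable transport, the equation \(\varphi(p) = \psi\bigl(\mathrm{transport}(e_p,p)\bigr)\). I~would then define \(f : P \to Q\) by \(f(p) \colonequiv \mathrm{transport}(e_p,p)\); the required pointwise equality is exactly what the third component of \(h(p)\) provides.

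For the backward direction, assume \(f : P \to Q\) with \(\varphi(p) = \psi(f(p))\) for all \(p : P\), and assume \(p : P\) (to produce the equality \((P,\varphi) = (Q,\psi)\) demanded by \(\sqsubseteq\)). First, I~would establish \(P = Q\) by propositional extensionality: we have \(f : P \to Q\) by hypothesis, and the reverse implication \(Q \to P\) sends everything to the assumed \(p\). The propositionality components agree automatically because \(\isprop\) is proposition-valued. It remains to verify that \(\varphi\) transports along \(P = Q\) to~\(\psi\). By function extensionality it suffices to check this pointwise on \(q : Q\); since \(P\) and \(Q\) are both propositions and \(p : P\), the transported value reduces to \(\varphi(p)\), and we have
\[
\varphi(p) = \psi(f(p)) = \psi(q),
\]
where the first equality is the hypothesis and the second uses that \(Q\) is a proposition (so \(f(p) = q\)) together with applying \(\psi\).

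The only slightly delicate point is bookkeeping the \(\Sigma\)-type equality, namely translating between \((P,i,\varphi) = (Q,j,\psi)\) and its component-wise characterization under transport; everything else is routine once we exploit that \(P, Q\) are propositions and apply propositional and function extensionality as set up in Section~\ref{foundations}.
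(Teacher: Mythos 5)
The paper states \cref{lifting-order-alt} without supplying a proof, evidently treating the unfolding of \(\sqsubseteq_{\lifting_\V(X)}\) as routine, so there is no published argument to compare against. Your proof is correct: for the forward direction, from \(h : P \to (P,\varphi) = (Q,\psi)\) and \(p : P\) you extract a first-component path \(e_p : P = Q\) and define \(f(p) \colonequiv \mathrm{transport}(e_p,p)\), with the transported third component of \(h(p)\) giving exactly \(\varphi(p)=\psi(f(p))\); for the backward direction you correctly exploit that \(p : P\) is available inside the implication to get \(Q \to P\) constantly, hence \(P = Q\) by propositional extensionality, and the remaining transport identification of \(\varphi\) with \(\psi\) reduces pointwise by propositionality of \(P\) and \(Q\) together with function extensionality. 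The only small thing worth saying explicitly is that \(e_p\) depends on \(p\) in your definition of \(f\); this is harmless because \(Q\) is a proposition, so there is no coherence issue to check, but a reader might wonder.
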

Observe that this exhibits \(\lifting_{\V }(X)\) as locally small.
We will show that \(\lifting_{\V }(X)\) is the \emph{free} pointed
\(\V \)-dcpo on a set \(X\), but to do that, we first need a lemma.

\begin{lemma}\label{pointed-dcpos-have-subsingleton-joins}
  Let \(D\) be a pointed \(\V \)-dcpo. Then \(D\) has suprema of families
  indexed by propositions in \(\V \), i.e.\ if \(P : \V \) is a proposition,
  then any \(\alpha : P \to D\) has a supremum \(\bigvee \alpha\).

  Moreover, if \(E\) is another pointed \(\V \)-dcpo and \(f : D \to E\) is
  strict and continuous, then \(f\paren*{\bigvee \alpha}\) is the supremum of
  the family \(f \circ \alpha\).
\end{lemma}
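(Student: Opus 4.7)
The plan is to reduce a proposition-indexed supremum to a directed supremum by artificially adjoining the least element to the family. Given a proposition $P : \V$ and $\alpha : P \to D$, I would define a family $\beta : \unitt_{\V} + P \to D$ by $\beta(\inl(\star)) \colonequiv \bot_D$ and $\beta(\inr(p)) \colonequiv \alpha(p)$. The indexing type lies in $\V$ because $\unitt_{\V} : \V$ and $P : \V$, so $\unitt_{\V} + P : \V$, which is exactly what is needed to feed $\beta$ to the directed-completeness of $D$.

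First I would verify that $\beta$ is directed. Inhabitedness is witnessed by $\inl(\star)$. For the upper-bound condition, given $i, j : \unitt_{\V} + P$, there are essentially three cases: if at least one of $i, j$ is $\inl(\star)$, then the other index gives an upper bound since $\bot_D \order \beta(k)$ for every $k$; if $i = \inr(p)$ and $j = \inr(p')$, then the assumption that $P$ is a proposition forces $p = p'$, and either index serves as the upper bound. I then set $\bigvee \alpha \colonequiv \bigsqcup \beta$ and check the universal property: it is an upper bound for $\alpha$ because each $\alpha(p) = \beta(\inr(p)) \order \bigsqcup \beta$, and it is least because any upper bound $u$ of $\alpha$ is automatically an upper bound of $\beta$ (the only extra condition, $\bot_D \order u$, is trivial).

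For the second part, suppose $f : D \to E$ is strict and continuous. By continuity, $f\paren*{\bigsqcup \beta} = \bigsqcup (f \circ \beta)$. By strictness, $(f \circ \beta)(\inl(\star)) = f(\bot_D) = \bot_E$, and $(f \circ \beta)(\inr(p)) = f(\alpha(p))$. Thus $f \circ \beta$ is precisely the adjoined directed family associated to $f \circ \alpha$ in the pointed $\V$-dcpo $E$, and so its directed supremum coincides with $\bigvee (f \circ \alpha)$ by the first part of the lemma applied in~$E$.

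I do not anticipate a genuine obstacle here; the key conceptual point is the adjunction of $\bot_D$ to turn a proposition-indexed family into a directed one, and noticing that strictness is used in exactly one place, namely to identify the image of the adjoined $\inl(\star)$ component with $\bot_E$ so that the same adjunction trick works on the other side.
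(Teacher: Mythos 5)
Your proof is correct and follows essentially the same route as the paper's: adjoin $\bot_D$ as an $\inl(\star)$-component to turn a proposition-indexed family into a directed one, take its directed supremum, and then use continuity plus strictness to see that the image family is the analogous adjoined family in $E$. You spell out the directedness check and the role of strictness more explicitly than the paper does, but the underlying argument is the same.
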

\begin{proof}
  Let \(D\) be a pointed \(\V \)-dcpo, \(P : \V \) a
  proposition and \(\alpha : P \to D\) a function. Now define
  \(\beta : \unitt_{\V } + P \to D\) by
  \(\inl(\star) \mapsto \bot_{D}\) and \(\inr(p) \mapsto \alpha(p)
  \). Then, \(\beta\) is easily seen to be directed and so it has a well-defined
  supremum in \(D\), which is also the supremum of \(\alpha\). The
  second claim follows from the fact that \(\beta\) is directed, so continuous
  maps must preserve its supremum.
\end{proof}

\begin{lemma}\label{lifting-element-as-sup}
  Let \(X : \U \) be a set and let \((P,\varphi)\) be an arbitrary
  element of \(\lifting_{\V }(X)\). Then
  \((P,\varphi) = \bigvee_{p : P} \eta_X\paren*{\varphi(p)}\).
\end{lemma}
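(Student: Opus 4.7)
My strategy is to verify that $(P,\varphi)$ itself satisfies the universal property of the supremum $\bigvee_{p:P}\eta_X(\varphi(p))$, from which the equality follows by antisymmetry (i.e.\ uniqueness of suprema). Note first that this supremum is well-defined: since $P$ is a proposition in $\V$, \Cref{pointed-dcpos-have-subsingleton-joins} guarantees that the family $p \mapsto \eta_X(\varphi(p))$ does have a supremum in $\lifting_\V(X)$.

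For the upper bound condition, I would fix $p : P$ and show that $\eta_X(\varphi(p)) = (\unitt, \lambda u.\varphi(p)) \sqsubseteq (P,\varphi)$. Using \Cref{lifting-order-alt}, it suffices to exhibit a function $\unitt \to P$ whose composite with $\varphi$ agrees with $\lambda u.\varphi(p)$; the obvious choice $\star \mapsto p$ works, and the compatibility equation collapses to $\varphi(p) = \varphi(p)$.

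For the least upper bound condition, suppose $(Q,\psi)$ is any upper bound of the family. I would aim to show $(P,\varphi) \sqsubseteq (Q,\psi)$ by applying \Cref{lifting-order-alt} in the other direction, i.e.\ by producing a function $g : P \to Q$ with $\varphi(p) = \psi(g(p))$ for all $p : P$. For each $p : P$, the assumption $\eta_X(\varphi(p)) \sqsubseteq (Q,\psi)$ together with \Cref{lifting-order-alt} yields a function $f_p : \unitt \to Q$ satisfying $\varphi(p) = \psi(f_p(\star))$. Setting $g(p) \colonequiv f_p(\star)$ gives the required data.

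The proof is essentially bookkeeping: the only thing to keep in mind is that the relevant order is the lifting order, which is cleanest to manipulate through \Cref{lifting-order-alt} rather than directly through its official definition $l \sqsubseteq m \colonequiv (l\isdefined \to l = m)$. There is no deep obstacle; the subsingleton nature of $P$ does all the real work, since it makes the restricted family behave like a single element.
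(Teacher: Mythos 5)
Your proof is correct. Since the paper states \cref{lifting-element-as-sup} without an explicit proof, there is nothing to compare against directly, but your approach is the natural one and all the pieces fit.

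A few remarks to confirm the argument is gap-free. The extraction of $f_p$ for each $p : P$ via \cref{lifting-order-alt} is legitimate without any choice principle: the right-hand side of that lemma, $\sum_{f : \unitt \to Q}\prod_{u : \unitt} \varphi(p) = \psi(f(u))$, is itself a proposition (because $Q$ is a proposition, so $\unitt \to Q$ is a proposition, and $X$ is a set), so the biconditional yields actual data, functorially in $p$. Hence $g \colonequiv \lambda p . f_p(\star)$ is a well-typed function $P \to Q$ and the compatibility equation $\varphi(p) = \psi(g(p))$ holds pointwise. The conclusion $(P,\varphi) = \bigvee_{p:P}\eta_X(\varphi(p))$ then follows by antisymmetry, since you have shown $(P,\varphi)$ is an upper bound below every upper bound, and the already-constructed supremum from \cref{pointed-dcpos-have-subsingleton-joins} is in particular an upper bound and a lower bound of upper bounds.

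An alternative, slightly more computational route would be to unwind the construction of directed suprema in \cref{construction-lifting}: the supremum of the family $\unitt_\V + P \to \lifting_\V(X)$ used in \cref{pointed-dcpos-have-subsingleton-joins} has propositional component $\exists_{i}\,Q_i \simeq \|P\| \simeq P$ (since $P$ is already a proposition), and the associated partial map agrees with $\varphi$; one then concludes by propositional and function extensionality. Your universal-property argument sidesteps this unwinding and is arguably cleaner, at the cost of invoking the order characterization of \cref{lifting-order-alt} twice. Either works.
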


\begin{theorem}\label{lifting-is-free}
  The lifting \(\lifting_{\V }(X)\) gives the free pointed \(\V \)-dcpo on a set
  \(X\). Put precisely, if \(X : \U \) is a set, then for every pointed
  \(\V \)-dcpo \(D : \DCPO{V}{U'}{T'}\) and function \(f : X \to D\), there is a
  unique strict and continuous function
  \(\overline{f} : \lifting_{\V }(X) \to D\) such that
  \[
    \begin{tikzcd}
      X \ar[dr, "\eta_X"'] \ar[rr, "f"] & & D  \\
      & \lifting_{\V }(X) \ar[ur, dashed, "\overline{f}"']
    \end{tikzcd}
  \]
  commutes.
\end{theorem}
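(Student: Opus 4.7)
The plan is to define $\overline{f}$ directly by a supremum formula and then read off existence, commutativity, strictness, continuity, and uniqueness from the lemmas already established, especially \Cref{pointed-dcpos-have-subsingleton-joins,lifting-element-as-sup}. Concretely, I would set
\[
  \overline{f}(P,\varphi) \colonequiv \bigvee_{p : P} f(\varphi(p)),
\]
where the right-hand side is a supremum of a $P$-indexed family in $D$, which exists by \Cref{pointed-dcpos-have-subsingleton-joins} since $P : \V$ is a proposition. The triangle $\overline{f} \circ \eta_X = f$ is then immediate: $\overline{f}(\eta_X(x)) = \bigvee_{u : \unitt} f(x) = f(x)$. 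Strictness is equally direct, since $\overline{f}(\bot_{\lifting_\V(X)})$ is a supremum of an empty family, which is $\bot_D$.

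For continuity, given a directed family $(Q_i,\varphi_i) : I \to \lifting_\V(X)$, its supremum in the lifting is $(\exists_{i:I}Q_i, \psi)$ where $\psi$ is the unique map factoring $(i,q) \mapsto \varphi_i(q)$ through the propositional truncation. I~would then need to prove
\[
  \bigvee_{t : \exists_{i:I}Q_i} f(\psi(t)) \;=\; \bigsqcup_{i : I} \bigvee_{q : Q_i} f(\varphi_i(q))
\]
in $D$. The $\order$-direction is proved by showing that the right-hand side is an upper bound for each $f(\psi(t))$: given $t : \exists_i Q_i$, one wants $f(\psi(t)) \order \bigsqcup_i \bigvee_{q:Q_i} f(\varphi_i(q))$; this is a proposition, so one may truncation-eliminate $t$ to $(i,q)$, and then $f(\psi|i,q|) = f(\varphi_i(q))$, which is below $\bigvee_{q:Q_i}f(\varphi_i(q))$, hence below the directed supremum. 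The other direction is the symmetric statement that each $\bigvee_{q:Q_i} f(\varphi_i(q))$ is below the left-hand side, which holds because $|(i,q)| : \exists_i Q_i$ witnesses $f(\varphi_i(q)) = f(\psi|(i,q)|)$.

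For uniqueness, suppose $g : \lifting_\V(X) \to D$ is any strict continuous function with $g \circ \eta_X = f$. By \Cref{lifting-element-as-sup}, $(P,\varphi) = \bigvee_{p:P} \eta_X(\varphi(p))$, and by the second part of \Cref{pointed-dcpos-have-subsingleton-joins} strict continuous maps preserve suprema indexed by propositions in $\V$, so
\[
  g(P,\varphi) \;=\; \bigvee_{p:P} g(\eta_X(\varphi(p))) \;=\; \bigvee_{p:P} f(\varphi(p)) \;=\; \overline{f}(P,\varphi).
\]
The main obstacle I expect is the continuity computation: it is the only place where one genuinely needs to manipulate the propositional truncation and check that suprema of suprema behave correctly, whereas the remaining clauses are essentially forced by the formula for $\overline{f}$ combined with the two preparatory lemmas. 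A minor additional check is that $\overline{f}$ is well-defined on equal partial elements, but this is automatic since a supremum depends only on the family and $(P,\varphi)$ determines the family $p \mapsto f(\varphi(p))$ up to equivalence of its domain.
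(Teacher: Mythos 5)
Your proof takes essentially the same approach as the paper's: it defines $\overline f(P,\varphi) = \bigvee_{p:P} f(\varphi(p))$ via \cref{pointed-dcpos-have-subsingleton-joins}, reads off strictness and commutativity, and proves uniqueness from \cref{lifting-element-as-sup} together with the preservation of proposition-indexed suprema by strict continuous maps. You additionally spell out the continuity computation that the paper dismisses as ``easily seen,'' and that detail is correct (one could remark that the existence of the inner directed supremum $\bigsqcup_i \bigvee_{q:Q_i} f(\varphi_i(q))$ relies on $\overline f$ being monotone, but this follows from \cref{lifting-order-alt} and is a minor point).
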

  \begin{proof}
  We define \(\overline f : \lifting_{\V }(X) \to D\) by
  \((P,\varphi) \mapsto \bigvee_{p : P} f(\varphi(p))\), which is well-defined
  by \cref{pointed-dcpos-have-subsingleton-joins} and easily seen to be
  continuous. For uniqueness, suppose that we have
  \({g : \lifting_{\V }(X) \to D}\) strict and continuous such that
  \(g \circ \eta_X = f\). Let \((P,\varphi)\) be an arbitrary element of
  \(\lifting_{\V }(X)\). Using \cref{lifting-element-as-sup}, we have:
  \begin{align*}
    g\paren*{P,\varphi}& = g\paren*{\bigvee_{p : P}\eta_X\paren*{\varphi(p)}} \\
    & = \bigvee_{p : P}g\paren*{\eta_X\paren*{\varphi(p)}}
    && \text{(by \cref{pointed-dcpos-have-subsingleton-joins} and continuity of \(g\))} \\
    & = \bigvee_{p : P} f\paren*{\phi(p)}
    && \text{(by assumption on \(g\))} \\
    & = \overline{f}(P,\varphi)
    && \text{(by definition)},
  \end{align*}
  as desired.
\end{proof}

There is yet another way in which the lifting is a free construction. What is
noteworthy about this is that freely adding subsingleton suprema automatically
gives all directed suprema.

\begin{definition}[\(\Omega_{\V }\)-complete]
  A poset \((P,\order)\) is \emph{\(\Omega_{\V }\)-complete} if it has
  suprema for all families indexed by a proposition in \(\V \).
\end{definition}
\begin{theorem}
  \label{lifting-is-free2}
  The lifting \(\lifting_{\V }(X)\) gives the free
  \(\Omega_{\V }\)-complete poset on a set \(X\). Put precisely, if
  \(X : \U \) is a set, then for every \(\Omega_{\V }\)-complete
  poset \((P,\order)\) with \(P : \U'\) and \(\order\) taking values
  in~\(\T'\) and function \(f : X \to P\), there exists a unique
  monotone \(\overline{f} : \lifting_{\V }(X) \to P\) preserving all
  suprema indexed by propositions in \(\V \), such that
  \[
    \begin{tikzcd}
      X \ar[dr, "\eta_X"'] \ar[rr, "f"] & & P \\
      & \lifting_{\V }(X) \ar[ur,dashed,"\overline{f}"']
    \end{tikzcd}
  \]
  commutes.
  \Omit{
    \(\overline{f} \circ \eta_X = f\).
  }
\end{theorem}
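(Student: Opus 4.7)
The plan is to mimic the proof of \cref{lifting-is-free}, replacing directed suprema by proposition-indexed suprema throughout. First, observe that any \(\Omega_{\V}\)-complete poset is automatically pointed: since \(\emptyt_{\V}\) is a proposition in \(\V\), the supremum of the empty family \(\emptyt_{\V} \to P\) exists and is a least element. Hence, for any partial element \((Q,\varphi) \in \lifting_{\V}(X)\), the family \(q \mapsto f(\varphi(q))\) has a supremum in the target \(P\), so I set \(\overline{f}(Q,\varphi) \colonequiv \bigvee_{q:Q} f(\varphi(q))\).

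For the triangle, \(\overline{f}(\eta_X(x)) = \bigvee_{u:\unitt} f(x) = f(x)\), since the supremum of a constant family over a singleton is the value. For monotonicity, suppose \((Q,\varphi) \order (R,\psi)\); by \cref{lifting-order-alt} there is \(h : Q \to R\) with \(\varphi(q) = \psi(h(q))\), so each \(f(\varphi(q)) = f(\psi(h(q))) \order \bigvee_{r:R} f(\psi(r)) = \overline{f}(R,\psi)\), whence \(\overline{f}(Q,\varphi) \order \overline{f}(R,\psi)\). To verify preservation of proposition-indexed suprema, I would first compute, analogously to \cref{construction-lifting}, that the supremum in \(\lifting_{\V}(X)\) of a family \(\alpha : S \to \lifting_{\V}(X)\) indexed by a proposition \(S : \V\), with \(\alpha(s) = (Q_s,\varphi_s)\), is given by \(\paren*{\Sigma_{s : S} Q_s,\, (s,q) \mapsto \varphi_s(q)}\) — noting that \(\Sigma_{s:S} Q_s\) is itself a proposition since both \(S\) and each \(Q_s\) are — and then conclude by a Fubini-style reindexing of the resulting nested suprema.

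For uniqueness, the argument is essentially that of \cref{lifting-is-free}: given any \(g\) with the stated properties, \cref{lifting-element-as-sup} writes an arbitrary \((Q,\varphi)\) as \(\bigvee_{q:Q} \eta_X(\varphi(q))\), and then preservation of proposition-indexed suprema together with \(g \circ \eta_X = f\) yields \(g(Q,\varphi) = \bigvee_{q:Q} f(\varphi(q)) = \overline{f}(Q,\varphi)\). The only genuinely new work relative to \cref{lifting-is-free} is computing the explicit form of proposition-indexed suprema in \(\lifting_{\V}(X)\) needed for the preservation step; this mirrors \cref{construction-lifting} closely and is the main routine obstacle.
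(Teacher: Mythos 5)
Your proposal is correct and matches the paper's intended route: the paper's own proof is just ``Similar to the proof of \cref{lifting-is-free}'', and you have correctly filled in that analogy, including the key observation that an \(\Omega_{\V}\)-complete poset is automatically pointed and the explicit description of proposition-indexed suprema in \(\lifting_{\V}(X)\) as \(\paren*{\Sigma_{s:S} Q_s, (s,q) \mapsto \varphi_s(q)}\), which is indeed again a proposition since propositions are closed under \(\Sigma\).
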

  \begin{proof}
    Similar to the proof of \cref{lifting-is-free}.
\end{proof}

Finally, a variation of \cref{construction-lifting} freely adds a least element
to a dcpo.

\begin{construction}[\(\lifting'_{\V }(D)\)]
  Let \(D : \DCPO{V}{U}{T}\) be a \(\V \)-dcpo. We construct a
  \emph{pointed} {\(\V\)-dcpo}
  \(\lifting'_{\V }(D) : \DCPO{V}{V^+\sqcup U}{V\sqcup T}\). Its
  carrier is given by the type
  \( \sum_{P : \V }\isprop(P) \times (P \to D) \).

  The order is given by
  \( (P,\varphi) \sqsubseteq_{\lifting'_{\V }(D)} (Q,\psi) \colonequiv \sum_{f :
    P \to Q}\prod_{p : P}\varphi(p) \below_{D} \psi(f(p)) \) and has a least
  element \((\emptyt,\emptyisprop,\uniquefromempty)\).

  Now let
  \(\alpha \colonequiv \paren*{Q_{(-)},\varphi_{(-)}} : I \to
  \lifting'_{\V }(D)\) be a directed family. Consider
  \(\Phi : \paren*{\sum_{i : I} Q_i}\to D\) given by
  \((i,q) \mapsto \varphi_i(q)\). The supremum of \(\alpha\) is given by
  \(\paren*{\exists_{i : I} Q_i , \psi}\), where \(\psi\) takes a witness that
  \(\sum_{i : I}Q_i\) is inhabited to the directed (for which we needed
  \(\exists_{i : I} Q_i\)) supremum \(\bigsqcup \Phi\) in \(D\).

  Finally, we write
  \(\eta'_{D} : D \to \lifting'_{\V }(D)\) for
  the continuous map
  \(x \mapsto \paren*{\unitt, \unitisprop, \lambda u.x} \). \lipicsEnd
\end{construction}
\begin{theorem}
  \label{lifting-is-free3}
  The construction \(\lifting'_{\V }(D)\) gives the free pointed
  \(\V \)-dcpo on a \(\V \)-dcpo \(D\). Put precisely, if
  \(D : \DCPO{V}{U}{T}\) is a \(\V \)-dcpo, then for every
  pointed \(\V \)-dcpo \(E : \DCPO{V}{U'}{T'}\) and continuous
  function \(f : D \to E\), there is a unique strict
  continuous function
  \(\overline{f} : \lifting'_{\V }(D) \to E\) such that
  \[
    \begin{tikzcd}
      D \ar[dr, "\eta'_{D}"'] \ar[rr, "f"] & & E  \\
      & \lifting'_{\V }(D) \ar[ur, dashed, "\overline{f}"']
    \end{tikzcd}
  \]
  commutes.
  \Omit{
    \(\overline{f} \circ \eta'_D = f\).
  }
\end{theorem}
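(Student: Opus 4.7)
The plan is to mirror the proof of \cref{lifting-is-free}, now with the set $X$ replaced by the dcpo $D$ and the function $f : X \to D$ replaced by a continuous function $f : D \to E$. First I would define $\overline{f} : \lifting'_{\V}(D) \to E$ by $(P,\varphi) \mapsto \bigvee_{p : P} f(\varphi(p))$, which is well-defined because $E$ is pointed and $P$ is a proposition in $\V$, so \cref{pointed-dcpos-have-subsingleton-joins} applies to the family $f \circ \varphi : P \to E$. Strictness is immediate from the description of the bottom element (the empty join equals $\bot_E$), and the triangle commutes because $\overline{f}(\eta'_D(x)) = \bigvee_{u : \unitt} f(x) = f(x)$.

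To show that $\overline{f}$ is continuous, let $\alpha = (Q_{(-)}, \varphi_{(-)}) : I \to \lifting'_{\V}(D)$ be directed. By construction, $\bigsqcup \alpha = (\exists_{i : I} Q_i, \psi)$ where $\psi$ returns, on any witness, the directed supremum $\bigsqcup \Phi$ in $D$ for $\Phi(i,q) \colonequiv \varphi_i(q)$. Thus $\overline{f}\paren*{\bigsqcup \alpha} = \bigvee_{e : \exists_i Q_i} f\paren*{\bigsqcup \Phi}$. Using continuity of $f$ (noting that $\Phi$ is directed once $\sum_i Q_i$ is inhabited, which we have), this equals $\bigvee_{e : \exists_i Q_i} \bigsqcup_{(i,q)} f(\varphi_i(q))$; and since the inner expression does not depend on $e$ and the outer supremum is over a proposition, it further equals $\bigsqcup_{(i,q) : \sum_i Q_i} f(\varphi_i(q))$. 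A standard cofinality argument then identifies this with $\bigsqcup_{i : I} \bigvee_{q : Q_i} f(\varphi_i(q)) = \bigsqcup_{i : I} \overline{f}(\alpha_i)$.

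For uniqueness, I would first establish the direct analogue of \cref{lifting-element-as-sup}: every $(P,\varphi)$ in $\lifting'_{\V}(D)$ satisfies $(P,\varphi) = \bigvee_{p : P} \eta'_D(\varphi(p))$. This is checked directly from the explicit description of the order on $\lifting'_{\V}(D)$ (the pair $(P, \varphi)$ is easily seen to be an upper bound, and any other upper bound $(Q,\psi)$ yields by function $p \mapsto g_p(\star)$ an inequality $(P,\varphi) \order (Q,\psi)$). Then, given any strict continuous $g : \lifting'_{\V}(D) \to E$ with $g \circ \eta'_D = f$, the same calculation as in \cref{lifting-is-free}, combining \cref{pointed-dcpos-have-subsingleton-joins} with strictness and continuity of $g$, yields $g(P,\varphi) = \overline{f}(P,\varphi)$.

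The main technical obstacle I anticipate is the continuity step: because the supremum in $\lifting'_{\V}(D)$ is presented via a weakly-constant bridge map $\psi$ defined on the propositional truncation $\exists_{i : I} Q_i$ of an iterated sum, one must carefully unfold the construction and invoke the continuity of $f$ at the inner directed supremum before appealing to a Fubini-style equality of iterated suprema in $E$. Universe bookkeeping follows the same pattern as in \cref{lifting-is-free} and should not cause additional difficulty.
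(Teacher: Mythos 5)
Your proposal is correct and takes the route the paper intends—its own proof is simply ``Similar to the proof of \cref{lifting-is-free}''—replacing the set $X$ by the dcpo $D$, the map $f : X \to D$ by a continuous one, and \cref{lifting-element-as-sup} by its $\eta'_D$-analogue, with \cref{pointed-dcpos-have-subsingleton-joins} doing the same work. One small caveat in the continuity step: the identity $\bigvee_{e : \exists_i Q_i} c = c$ for $c$ not depending on $e$ holds only when $\exists_i Q_i$ is inhabited, and the term $\bigsqcup_{(i,q)} f(\varphi_i(q))$ is not even well-formed otherwise; a cleaner, case-free argument shows the two inequalities $\overline{f}(\bigsqcup\alpha) \order \bigsqcup_i \overline{f}(\alpha_i)$ and $\sqsupseteq$ separately (unpacking $e : \exists_i Q_i$ only inside the proof of a proposition), but this is a presentational matter and your structure and lemma dependencies are exactly right.
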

  \begin{proof}
  Similar to the proof of \cref{lifting-is-free}.
\end{proof}
\subsection{Exponentials}
\begin{construction}[\(E^{D}\)]
  Let \(D\) and \(E\) be two \(\V \)-dcpos. We
  construct another \(\V \)-dcpo \(E^{D}\) as follows.
  Its carrier is given by the type of continuous functions from \(D\) to \(E\).

  These functions are ordered pointwise, i.e.\ if
  \(f,g : D \to E\), then
  \[
    f \order_{E^{D}} g \colonequiv \prod_{x : D}f(x)
    \order_{E} g(x).
  \]
  Accordingly, directed suprema are also given pointwise. Explicitly, let
  \(\alpha : I \to E^{D}\) be a directed family. For every
  \(x : D\), we have the family \(\alpha_x : I \to E\) given
  by \(i \mapsto \alpha_i(x)\). This is a directed family in \(E\) and
  so we have a well-defined supremum \(\bigsqcup\alpha_x : E\) for
  every \(x : D\). The supremum of \(\alpha\) is then given by
  \(x \mapsto \bigsqcup \alpha_x\), where one should check that this assignment
  is indeed continuous.

  Finally, if \(E\) is pointed, then so is \(E^{D}\),
  because, in that case, the function \(x \mapsto \bot_{E}\) is the
  least continuous function from \(D\) to \(E\). \lipicsEnd
\end{construction}
\begin{remark}
  In general, the universe levels of \(E^{D}\) can be quite large and
  complicated. For~if \(D : \DCPO{V}{U}{T}\) and \(D : \DCPO{V}{U'}{T'}\), then
  \( E^{D} : \DCPO{V}{V^+\sqcup U\sqcup T\sqcup U'\sqcup T'}{U\sqcup T'}
  \).  Even~if \(\V = \U \equiv \T \equiv \U' \equiv \T'\), the carrier of
  \(E^{D}\) still lives in the ``large'' universe \(\V ^+\). (Actually, this
  scenario cannot happen non-trivially in a predicative setting, since
  non-trivial dcpos cannot be ``small'' \cite{deJong2020}.) Even so, as observed
  in \cite{deJong2019}, if we take
  \(\U \equiv \T \equiv \U' \equiv \T' \equiv \U_1\) and \(\V \equiv \U_0\),
  then \(D , E , E^{D}\) are all elements of \(\DCPOnum{0}{1}{1}\).
\end{remark}

\section{Scott's \texorpdfstring{\(D_\infty\)}
  {D\_\textbackslash infty}} \label{D:infty}

We now construct, predicatively, Scott's famous pointed
dcpo \(D_\infty\) which is isomorphic to its own function space
\(D_\infty^{D_\infty}\)
(\cref{isomorphic-to-self-exponential}). We follow Scott's original paper
\cite{Scott1972} rather closely, but with two differences. Firstly, we
explicitly keep track of the universe levels to make sure that our constructions
go through predicatively. Secondly, \cite{Scott1972} describes sequential
(co)limits, while we study the more general directed (co)limits
(\cref{limits-and-colimits}) and then specialize to sequential (co)limits later
(\cref{Scott's-example-using-self-exponentiation}).

\subsection{Limits and Colimits}\label{limits-and-colimits}
\begin{definition}[Deflation]
  Let \(D\) be a dcpo. An endofunction \(f : D \to D\)
  is a \emph{deflation} if \(f(x) \sqsubseteq x\) for all \(x : D\).
\end{definition}
\begin{definition}[Embedding-projection pair]
  Let \(D\) and \(E\) be two dcpos. An
  \emph{embedding-projection pair} from \(D\) to \(E\)
  consists of two continuous functions
  \(\varepsilon : D \to E\)
  (the~\emph{embedding}) and
  \(\pi : E \to D\) (the~\emph{projection})
  such~that:
  \begin{enumerate}[(i)]
  \item \(\varepsilon\) is a section of \(\pi\);
  \item \(\varepsilon \circ \pi\) is a deflation.
  \end{enumerate}
\end{definition}

For the remainder of this section, fix the following setup. Let \(\V\), \(\U \),
\(\T\) and \(\W\) be type universes. Let \((I,\order)\) be a directed preorder
with \(I : \V\) and \(\order\) taken values in \(\W\). Suppose that we have:
\begin{enumerate}[(i)]
\item for every \(i : I\), a \(\V \)-dcpo
  \(D_i : \DCPO{V}{U}{T}\);
\item for every \(i,j : I\) with \(i \sqsubseteq j\), an embedding-projection
  pair \(\paren*{\varepsilon_{i,j},\pi_{i,j}}\) from \(D_i\) to
  \(D_j\);
\end{enumerate}
such that
\begin{enumerate}[(i)]
\item for every \(i : I\), we have \(\varepsilon_{i,i} = \pi_{i,i} = \id\);
\item for every \(i,j,k : I\) with \(i \sqsubseteq j \sqsubseteq k\), we have
  \(\varepsilon_{i,k} \sim \varepsilon_{j,k} \circ \varepsilon_{i,j}\) and
  \(\pi_{i,k} \sim \pi_{i,j} \circ \pi_{j,k}\).
\end{enumerate}

\begin{construction}[\(D_\infty\)]\label{D-infty}
  Given the above inputs, we construct another \(\V \)-dcpo \break
  {\({D_\infty} : {\DCPO{V}{U \sqcup V \sqcup W}{U \sqcup T}}\)} as follows.
  Its carrier is given by the type:
  \[
    \sum_{\sigma : \prod_{i : I} D_i} \prod_{j : I , i \sqsubseteq j}
    \pi_{i,j}\paren*{\sigma_j} = \sigma_i.
  \]
  These functions are ordered pointwise, i.e.\ if
  \(\sigma,\tau : I \to D_i\), then
  \[
    \sigma \sqsubseteq_{D_\infty} \tau \colonequiv \prod_{i : I}\sigma_i
    \sqsubseteq_{D_i}\tau_i.
  \]
  Accordingly, directed suprema are also given pointwise. Explicitly, let
  \(\alpha : A \to D_\infty\) be a directed family. For every
  \(i : I\), we have the family \(A \to D_i\) given by
  \(a \mapsto \paren*{\alpha(a)}_i\), and denoted by \(\alpha_i\). One can show
  that \(\alpha_i\) is directed and so we have a well-defined supremum
  \(\bigsqcup {\alpha_i} : D_i\) for every \(i : I\). The supremum of
  \(\alpha\) is then given by the function
  \(i : I \mapsto \bigsqcup {\alpha_i}\), where one should check that
  \(\pi_{i,j}\paren*{\bigsqcup {\alpha_j}} = \bigsqcup {\alpha_i}\) holds
  whenever \(i \sqsubseteq j\). \lipicsEnd
\end{construction}

\begin{remark}
  We allow for general universe levels here, which is why \(D_\infty\) lives
  in the relatively complicated universe \(\U \sqcup \V \sqcup \W\). In concrete
  examples, such as in \cref{Scott's-example-using-self-exponentiation}, the
  situation simplifies to \(\V = \W = \U_0\) and \(\U = \T = \U_1\).
\end{remark}

\begin{construction}[\(\pi_{i,\infty}\)]\label{pi-infty}
  For every \(i : I\), we have a continuous function
  \(\pi_{i,\infty} : {D_\infty \to D_i}\), given by
  \(\sigma \mapsto \sigma_i\). \lipicsEnd
\end{construction}
\begin{construction}[\(\varepsilon_{i,\infty}\)]\label{epsilon-infty}
  For every \(i,j : I\), consider the function
  \begin{align*}
    \kappa : D_i &\to {\paren*{\sum_{k : I} i
    \sqsubseteq k \times j \sqsubseteq k} \to D_j} \\
    \kappa_x(k,l_i,l_j) &= \pi_{i,j}\paren*{\varepsilon_{i,k}(x)}.
  \end{align*}
  Using directedness of \((I,\sqsubseteq)\), we can show that for every
  \(x : D_i\) the map \(\kappa_x\) is weakly constant (i.e.\
  all its values are equal). Therefore, we can apply
  \cite[Theorem~5.4]{KrausEtAl2017} and factor \(\kappa_x\) through
  \(\exists_{k : I} \paren*{i \sqsubseteq k \times j \sqsubseteq k}\). But
  \((I,\sqsubseteq)\) is directed, so
  \(\exists_{k : I} \paren*{i \sqsubseteq k \times j \sqsubseteq k}\) is a
  singleton. Thus, we obtain a function
  \(\rho_{i,j} : D_i \to D_j\) such that: if we are given
  \(k : I\) with \(\paren*{l_i,l_j} : i \sqsubseteq k \times j \sqsubseteq k\),
  then \(\rho_{i,j}(x) = \kappa_x(k,l_i,l_j)\).

  Finally, this allows us to construct for every \(i : I\), a continuous
  function \(\varepsilon_{i,\infty} : {D_i \to D_\infty}\) by
  mapping \(x : D_i\) to the function
  \(\lambda (j : I) . \rho_{i,j}(x)\). \lipicsEnd
\end{construction}
\begin{theorem}
  For every \(i : I\), the pair
  \(\paren*{\varepsilon_{i,\infty},\pi_{i,\infty}}\) is an embedding-projection
  pair.
\end{theorem}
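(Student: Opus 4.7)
The plan is to verify the two conditions from the definition of embedding-projection pair: (i) that $\pi_{i,\infty}$ is a retraction of $\varepsilon_{i,\infty}$, and (ii) that $\varepsilon_{i,\infty} \circ \pi_{i,\infty}$ is a deflation on $D_\infty$. Both reduce, via the pointwise order and the pointwise definition of $\pi_{i,\infty}$ and $\varepsilon_{i,\infty}$, to componentwise statements about $\rho_{i,j}$ in each $D_j$, which is where the work lies.

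For (i), fix $x : D_i$. Unfolding, $\pi_{i,\infty}(\varepsilon_{i,\infty}(x)) = \rho_{i,i}(x)$. Using the characterizing equation for $\rho_{i,i}$ from \cref{epsilon-infty} with the witness $k \colonequiv i$ and the reflexivity proofs of $i \sqsubseteq i$, we get $\rho_{i,i}(x) = \pi_{i,i}(\varepsilon_{i,i}(x)) = x$, since by assumption $\varepsilon_{i,i} = \pi_{i,i} = \id$. Antisymmetry (or just pointwise equality via function extensionality) then gives $\pi_{i,\infty} \circ \varepsilon_{i,\infty} = \id_{D_i}$.

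For (ii), let $\sigma : D_\infty$. By the pointwise order on $D_\infty$, it suffices to show $\rho_{i,j}(\sigma_i) \sqsubseteq \sigma_j$ for every $j : I$. I would use directedness of $(I, \sqsubseteq)$ to obtain an upper bound $k : I$ with $i \sqsubseteq k$ and $j \sqsubseteq k$, so that by the defining property of $\rho_{i,j}$ we have $\rho_{i,j}(\sigma_i) = \pi_{j,k}(\varepsilon_{i,k}(\sigma_i))$. Rewriting $\sigma_i = \pi_{i,k}(\sigma_k)$ using the compatibility condition on $\sigma$, this becomes $\pi_{j,k}(\varepsilon_{i,k}(\pi_{i,k}(\sigma_k)))$. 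Since $(\varepsilon_{i,k}, \pi_{i,k})$ is an embedding-projection pair, $\varepsilon_{i,k} \circ \pi_{i,k}$ is a deflation on $D_k$, so $\varepsilon_{i,k}(\pi_{i,k}(\sigma_k)) \sqsubseteq \sigma_k$. Applying $\pi_{j,k}$, which is monotone by \cref{continuous-implies-monotone}, and then using compatibility again in the form $\pi_{j,k}(\sigma_k) = \sigma_j$, yields $\rho_{i,j}(\sigma_i) \sqsubseteq \sigma_j$.

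There is no serious obstacle: once one commits to the right choice of $k$ at each step (namely $k \colonequiv i$ in part (i), and an upper bound supplied by directedness in part (ii)), everything follows mechanically from the embedding-projection laws at each finite level $k$ together with the compatibility built into elements of $D_\infty$. The only mildly subtle point is a conceptual one: the value $\rho_{i,j}(x)$ is by construction independent of the chosen witness $k$ (this is precisely what the factorization through the propositional truncation in \cref{epsilon-infty} ensures), so one is free to pick whichever $k$ makes the algebra transparent.
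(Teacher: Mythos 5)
Your proof is correct and is the standard direct verification of the two embedding-projection conditions; the paper states this theorem without a written proof, so there is no argument to compare against, and yours is the one a reader would reconstruct. Two small remarks are worth flagging. First, you silently repair what appears to be a typo in \cref{epsilon-infty}: the displayed formula there reads \(\kappa_x(k,l_i,l_j) = \pi_{i,j}\paren*{\varepsilon_{i,k}(x)}\), which does not typecheck since \(\varepsilon_{i,k}(x) : D_k\) while \(\pi_{i,j} : D_j \to D_i\); your use of \(\pi_{j,k}\) in its place is the intended reading. Second, in part~(i) the mention of antisymmetry is misplaced: you have proved the pointwise identity \(\pi_{i,\infty}(\varepsilon_{i,\infty}(x)) = x\) outright, so only function extensionality (which your parenthetical already supplies) is needed to conclude \(\pi_{i,\infty} \circ \varepsilon_{i,\infty} = \id\), not antisymmetry. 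You also correctly note the one genuinely subtle point, namely that one may pick a concrete witness \(k\) from the directedness of \((I,\order)\) because the goal \(\rho_{i,j}(\sigma_i) \order \sigma_j\) is a proposition, and \(\rho_{i,j}\) is independent of the chosen witness by construction.
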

\begin{lemma}
  Let \(i,j : I\) such that \(i \sqsubseteq j\). Then
  \(\pi_{i,j} \circ \pi_{j,\infty} \sim \pi_i\), and
  \(\varepsilon_{j,\infty} \circ \varepsilon_{i,j} \sim
  \varepsilon_{i,\infty}\).
\end{lemma}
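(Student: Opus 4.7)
The first equation (which I read as $\pi_{i,j} \circ \pi_{j,\infty} \sim \pi_{i,\infty}$) is essentially by unfolding. Given $\sigma : D_\infty$, \cref{pi-infty} tells us $\pi_{j,\infty}(\sigma) = \sigma_j$ and $\pi_{i,\infty}(\sigma) = \sigma_i$. Since $\sigma$ comes equipped (by the definition of the carrier of $D_\infty$ in \cref{D-infty}) with a proof that $\pi_{i,j}(\sigma_j) = \sigma_i$ whenever $i \sqsubseteq j$, the result is immediate. So the first bullet is really just a matter of citing the coherence component built into elements of $D_\infty$.

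For the second equation, I would unfold both sides using \cref{epsilon-infty}: both $\varepsilon_{i,\infty}(x)$ and $\varepsilon_{j,\infty}(\varepsilon_{i,j}(x))$ are elements of $D_\infty$, so it suffices to prove pointwise equality, i.e.\ for every $k : I$ that
\[
  \rho_{j,k}\paren*{\varepsilon_{i,j}(x)} = \rho_{i,k}(x),
\]
where $\rho_{-,-}$ is the auxiliary family from \cref{epsilon-infty}. The plan is to exploit the fact that, up to truncation, $\rho$ is defined by picking any witness of the relevant joint upper bound in $I$: concretely, if $l : I$ satisfies $i \sqsubseteq l$, $j \sqsubseteq l$ and $k \sqsubseteq l$, then by the defining property of $\rho$ we have $\rho_{i,k}(x) = \pi_{k,l}\paren*{\varepsilon_{i,l}(x)}$ and $\rho_{j,k}(\varepsilon_{i,j}(x)) = \pi_{k,l}\paren*{\varepsilon_{j,l}(\varepsilon_{i,j}(x))}$.

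To produce such an $l$, I use directedness of $(I,\sqsubseteq)$: pick any upper bound of $j$ and $k$; the hypothesis $i \sqsubseteq j$ then supplies $i \sqsubseteq l$ by transitivity. Once we have evaluated both $\rho$'s at this common $l$, the assumed compatibility $\varepsilon_{j,l} \circ \varepsilon_{i,j} \sim \varepsilon_{i,l}$ from the standing hypotheses on the diagram collapses the two right-hand sides, giving the pointwise equality.

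The main obstacle is purely bookkeeping: $\rho$ is obtained from a weakly constant map by factoring through a propositional truncation via \cite[Theorem~5.4]{KrausEtAl2017}, so one has to be careful to invoke its defining equation with matching witnesses for $i \sqsubseteq l$, $j \sqsubseteq l$ and $k \sqsubseteq l$ on both sides. Once the same $l$ is used uniformly, the compatibility of the embeddings reduces everything to a one-line calculation.
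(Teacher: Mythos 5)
Your proof is correct, and since the paper states this lemma without giving a proof, you are supplying exactly the argument the paper leaves implicit. Both of your reading choices are right: the statement's \(\pi_i\) is a typo for \(\pi_{i,\infty}\), and the \(\pi_{i,j}\) in the definition of \(\kappa_x\) in \cref{epsilon-infty} must be read as \(\pi_{j,k}\) for the types to match. The first half is indeed immediate from the second component of an element of \(D_\infty\). For the second half, the reduction to pointwise equality of the underlying functions works because each \(D_k\) is a set, so the coherence component lives in a proposition; and since the goal is then a proposition (equality in a set), you are entitled to eliminate the truncation coming from directedness of \((I,\sqsubseteq)\) to obtain a concrete \(l\). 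Evaluating both sides of \(\rho\) at that common \(l\) and applying the standing compatibility \(\varepsilon_{j,l} \circ \varepsilon_{i,j} \sim \varepsilon_{i,l}\) finishes it. This is the natural and essentially unique route.
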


\begin{theorem}\label{limit}
  The dcpo \(D_\infty\) with the maps
  \(\paren*{\pi_{i,\infty}}_{i : I}\) is the limit of 
  \(\paren*{\paren*{D_i}_{i : I} , \paren*{\pi_{i,j}}_{i \sqsubseteq
      j}}\). 
  That~is, given
  \begin{enumerate}[(i)]
  \item a \(\V \)-dcpo
    \(E : \DCPO{V}{U'}{T'}\),
  \item continuous functions \(f_i : E \to D_i\) for every
    \(i : I\),
  \end{enumerate}
  such~that \(\pi_{i,j} \circ f_j \sim f_i\) whenever \(i \sqsubseteq j\), we
  have a continuous function \(f_\infty : E \to D_\infty\)
  such~that \(\pi_{i,\infty} \circ f_\infty \sim f_i\) for every \(i :
  I\). Moreover, \(f_\infty\) is the unique such continuous function.

  The function \(f_\infty\) is given by mapping \(y : E\) to the
  function \(\lambda (i : I) . f_i(y)\).
\end{theorem}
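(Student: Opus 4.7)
The plan is to verify that the explicit formula $f_\infty(y) \colonequiv \lambda (i:I). f_i(y)$ given in the statement defines a function into $D_\infty$, show it is continuous, verify the triangle, and then prove uniqueness.

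First I would check that $f_\infty(y)$ really lies in $D_\infty$, i.e.\ that the coherence condition $\pi_{i,j}(f_j(y)) = f_i(y)$ holds for all $i \sqsubseteq j$. But this is exactly the assumed compatibility $\pi_{i,j} \circ f_j \sim f_i$ applied at $y$, so there is nothing to do beyond unfolding.

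Next I would establish continuity of $f_\infty$. Since directed suprema in $D_\infty$ are computed pointwise in each coordinate $i : I$ (by \cref{D-infty}), it suffices to show that for every directed family $\beta : A \to E$ and every $i : I$, the equation $(f_\infty(\bigsqcup \beta))_i = \bigsqcup_{a : A} (f_\infty(\beta(a)))_i$ holds in $D_i$. Unfolding the definition, the left-hand side is $f_i(\bigsqcup \beta)$ and the right-hand side is $\bigsqcup_{a : A} f_i(\beta(a))$, so continuity of each $f_i$ finishes this step. The commutation $\pi_{i,\infty} \circ f_\infty \sim f_i$ is then immediate, since $\pi_{i,\infty}(\sigma) = \sigma_i$ by \cref{pi-infty}.

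The only subtle step is uniqueness, and even this is straightforward. Suppose $g : E \to D_\infty$ is continuous with $\pi_{i,\infty} \circ g \sim f_i$ for every $i : I$. Fix $y : E$; then for each $i : I$ we have $(g(y))_i = \pi_{i,\infty}(g(y)) = f_i(y) = (f_\infty(y))_i$, so by function extensionality the underlying dependent functions $\prod_{i:I} D_i$ agree. Since each $D_i$ is a set, the coherence condition $\prod_{j:I, i \sqsubseteq j}\pi_{i,j}(\sigma_j) = \sigma_i$ is a proposition, and hence equality in the $\Sigma$-type defining the carrier of $D_\infty$ is determined by equality of first projections. Therefore $g(y) = f_\infty(y)$ in $D_\infty$, and a second application of function extensionality gives $g = f_\infty$. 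I do not expect any real obstacle; the main thing to get right is consistent bookkeeping of the $\Sigma$-type structure of $D_\infty$ and the pointwise description of its suprema.
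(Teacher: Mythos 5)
Your proof is correct and takes the same route the paper implicitly relies on: the paper states the explicit formula $f_\infty(y) = \lambda (i:I).\,f_i(y)$ and omits the verification as routine, which is exactly what you carry out. Your uniqueness argument correctly uses the fact that the coherence condition is a proposition (each $D_i$ being a set), so equality in the carrier $\Sigma$-type reduces to equality of first projections; nothing is missing.
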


\begin{theorem}\label{colimit}
  The dcpo \(D_\infty\) with the maps
  \(\paren*{\varepsilon_{i,\infty}}_{i : I}\) is the colimit of 
  \(\paren*{\paren*{D_i}_{i : I} , \paren*{\varepsilon_{i,j}}_{i
      \sqsubseteq j}}\). 
  That is, given
  \begin{enumerate}[(i)]
  \item a \(\V \)-dcpo
    \(E : \DCPO{V}{U'}{T'}\),
  \item continuous functions \(g_i : D_i \to E\) for every
    \(i : I\),
  \end{enumerate}
  such that \(g_j \circ \varepsilon_{i,j} \sim g_i\) whenever
  \(i \sqsubseteq j\), we have a continuous function
  \(g_\infty : D_\infty \to E\) such~that
  \(g_\infty \circ \varepsilon_{i,\infty} \sim g_i\) for every \(i :
  I\). Moreover, \(g_\infty\) is the unique such continuous function.

  The function \(g_\infty\) is given by
  \(\sigma \mapsto \bigsqcup_{i : I}g_i\paren*{\sigma_i}\), where one should
  check that the family \(i \mapsto g_i(\sigma_i)\) is indeed directed.
\end{theorem}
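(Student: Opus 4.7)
The plan is to take $g_\infty(\sigma) \colonequiv \bigsqcup_{i : I} g_i(\sigma_i)$, verify that it is well-defined and continuous, prove the triangle identity $g_\infty \circ \varepsilon_{i,\infty} \sim g_i$, and then obtain uniqueness from a pointwise ``expansion lemma'' for elements of $D_\infty$. First I would check that the family $i \mapsto g_i(\sigma_i)$ is directed in $E$. Given $i, j : I$, pick $k$ with $i,j \sqsubseteq k$ by directedness of $(I,\sqsubseteq)$. Using the coherence $\sigma_i = \pi_{i,k}(\sigma_k)$ that comes from $\sigma \in D_\infty$, the hypothesis $g_k \circ \varepsilon_{i,k} \sim g_i$, monotonicity of $g_k$ (\cref{continuous-implies-monotone}), and the deflation property $\varepsilon_{i,k}(\pi_{i,k}(\sigma_k)) \sqsubseteq \sigma_k$, we obtain
\[
  g_i(\sigma_i) \;=\; g_k(\varepsilon_{i,k}(\pi_{i,k}(\sigma_k))) \;\sqsubseteq\; g_k(\sigma_k),
\]
and symmetrically for $j$. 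Continuity of $g_\infty$ then follows from the pointwise description of directed suprema in $D_\infty$ (\cref{D-infty}), continuity of each $g_i$, and commuting two directed suprema in $E$.

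For the triangle identity, unfold $g_\infty(\varepsilon_{i,\infty}(x)) = \bigsqcup_{j : I} g_j(\rho_{i,j}(x))$ using \cref{epsilon-infty}. For any $j$, choose $k$ with $i,j \sqsubseteq k$; then $\rho_{i,j}(x) = \pi_{j,k}(\varepsilon_{i,k}(x))$, so
\[
  g_j(\rho_{i,j}(x)) \;=\; g_k(\varepsilon_{j,k}(\pi_{j,k}(\varepsilon_{i,k}(x)))) \;\sqsubseteq\; g_k(\varepsilon_{i,k}(x)) \;=\; g_i(x),
\]
using the compatibility for $g_j$ (written via $g_j \sim g_k \circ \varepsilon_{j,k}$), monotonicity of $g_k$, and deflation for $\varepsilon_{j,k} \circ \pi_{j,k}$. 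Conversely, taking $j = i$ with $k = i$ gives $\rho_{i,i}(x) = x$ since $\varepsilon_{i,i} = \pi_{i,i} = \id$, so $g_i(x)$ appears as a member of the family and is dominated by the supremum. Equality follows.

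For uniqueness, the crucial ingredient is the expansion lemma: for every $\sigma : D_\infty$, the family $i \mapsto \varepsilon_{i,\infty}(\sigma_i)$ is directed in $D_\infty$ with supremum~$\sigma$. Granted this, any continuous $h : D_\infty \to E$ with $h \circ \varepsilon_{i,\infty} \sim g_i$ must satisfy $h(\sigma) = h(\bigsqcup_{i} \varepsilon_{i,\infty}(\sigma_i)) = \bigsqcup_{i} h(\varepsilon_{i,\infty}(\sigma_i)) = \bigsqcup_{i} g_i(\sigma_i) = g_\infty(\sigma)$. The main obstacle is the lemma itself. Directedness of the family reduces, via directedness of $I$ and monotonicity, to the observations already used above. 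For the supremum claim, by the pointwise description of suprema in $D_\infty$ one is reduced to showing $\bigsqcup_{i : I} \rho_{i,j}(\sigma_i) = \sigma_j$ for each $j : I$: the bound $\rho_{i,j}(\sigma_i) \sqsubseteq \sigma_j$ comes from picking $k \sqsupseteq i,j$ and computing $\rho_{i,j}(\sigma_i) = \pi_{j,k}(\varepsilon_{i,k}(\pi_{i,k}(\sigma_k))) \sqsubseteq \pi_{j,k}(\sigma_k) = \sigma_j$ via deflation, monotonicity of $\pi_{j,k}$, and $\sigma \in D_\infty$, while the reverse inequality is witnessed by the $i = j$ summand $\rho_{j,j}(\sigma_j) = \sigma_j$.
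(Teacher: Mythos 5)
Your proposal is correct and follows exactly the route the paper has in mind: define $g_\infty$ by the displayed formula, verify directedness, continuity and the triangle identities directly, and derive uniqueness from the expansion $\sigma = \bigsqcup_{i:I}\varepsilon_{i,\infty}(\sigma_i)$. The paper's own proof consists only of naming that expansion lemma and stating ``the rest can be checked directly''; your write-up supplies precisely those checks, with the deflation/section properties and the coherences $\pi_{i,k}(\sigma_k)=\sigma_i$, $g_k\circ\varepsilon_{i,k}\sim g_i$ applied in the expected places.
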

\begin{proof}
  For uniqueness, it is useful to know that an element \(\sigma : D_\infty\) can
  be expressed as the directed supremum
  \(\bigsqcup_{i : I} \varepsilon_{i,\infty}\paren*{\sigma_i}\). The rest can be
  checked directly.
\end{proof}
It should be noted that in both universal properties \(E\) can have its
  carrier in any universe \(\U'\) and its order taking values in any universe
  \(\T'\), even though we required all \(D_i\) to have their carriers and orders
  in two fixed universes \(\U \) and \(\T \), respectively.

\subsection{Scott's Example Using Self-exponentiation}
\label{Scott's-example-using-self-exponentiation}
We now show that we can construct Scott's
\(D_\infty\) \cite{Scott1972} predicatively. Formulated precisely, we construct a
pointed
\(D_\infty : \DCPOnum{0}{1}{1}\)
such that \(D_\infty\) is isomorphic to its
self-exponential~\(D_\infty^{D_\infty}\).

We employ the machinery from \cref{limits-and-colimits}. Following~\cite[pp.\
126--127]{Scott1972}, we inductively define pointed dcpos
\(D_n :\DCPOnum{0}{1}{1}\) for every natural number \(n\):
\begin{enumerate}[(i)]
\item \(D_0 \colonequiv \lifting_{\U_0}\paren*{\unitt_{\U_0}}; \)
\item \(D_{n+1} \colonequiv D_n^{D_n}\).
\end{enumerate}
Next, we inductively define embedding-projection pairs
\(\paren*{\varepsilon_n , \pi_n}\) from \(D_n\) to
\(D_{n+1}\):
\begin{enumerate}[(i)]
\item \(\varepsilon_0 : D_0 \to D_1\) is given by mapping
  \(x : D_0\) to the continuous function that is constantly~\(x\);
  \(\pi_0 : D_1 \to D_0\) is given by evaluating a continuous
  function \(f : D_0 \to D_0\) at \(\bot\);
\item \(\varepsilon_{n+1} : D_{n+1} \to D_{n+2}\) takes a
  continuous function \(f : D_n \to D_n\) to the continuous
  composite
  \(
    D_{n+1} \xrightarrow{\pi_n} D_n \xrightarrow{f} D_n
    \xrightarrow{\varepsilon_n} D_{n+1};
    \)

  \(\pi_{n+1} : D_{n+2} \to D_{n+1}\) takes a continuous
  function \(f : D_{n+1} \to D_{n+1}\) to the continuous
  composite
  \(
    D_n \xrightarrow{\varepsilon_n} D_{n+1} \xrightarrow{f}
    D_{n+1} \xrightarrow{\pi_n} D_n.
  \)
\end{enumerate}

In order to apply the machinery from \cref{limits-and-colimits}, we will need
embedding-projection pairs \(\paren*{\varepsilon_{n,m},\pi_{n,m}}\) from
\(D_n\) to \(D_m\) whenever \(n \leq m\). Let \(n\) and \(m\)
be natural numbers with \(n \leq m\) and let \(k\) be the natural number
\(m - n\). We define the pairs by induction on \(k\):
\begin{enumerate}[(i)]
\item if \(k = 0\), then we set \(\varepsilon_{n,n} = \pi_{n,n} = \id\);
\item if \(k = l + 1\), then
  \(\varepsilon_{n,m} = \varepsilon_{n+l} \circ \varepsilon_{n,n+l}\) and
  \(\pi_{n,m} = \pi_{n,n+l} \circ \pi_{n+l}\).
\end{enumerate}

So, \Cref{D-infty,pi-infty,epsilon-infty} give us
\(D_\infty : \DCPOnum{0}{1}{1}\) with embedding-projection pairs
\(\paren*{\varepsilon_{n,\infty},\pi_{n,\infty}}\) from \(D_n\) to
\(D_\infty\) for every natural number \(n\).

\begin{lemma}\label{pi-is-strict}
  Let \(n\) be a natural number. The function
  \(\pi_n : D_{n+1} \to D_n\) is strict. Hence, so is
  \(\pi_{n,m}\) whenever \(n \leq m\).
\end{lemma}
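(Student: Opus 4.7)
The plan is to prove strictness of $\pi_n$ by induction on $n$, and then deduce strictness of $\pi_{n,m}$ by a secondary induction on $k = m - n$ using that compositions of strict maps are strict.

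For the base case $n = 0$, recall that $D_1 \equiv D_0^{D_0}$ is a pointed exponential, so its least element $\bot_{D_1}$ is the constant function $x \mapsto \bot_{D_0}$, as described in the construction of $E^D$. Since $\pi_0$ is defined by evaluation at $\bot_{D_0}$, we get $\pi_0(\bot_{D_1}) = \bot_{D_1}(\bot_{D_0}) = \bot_{D_0}$ directly. For the inductive step, assume $\pi_n$ is strict. By definition $\pi_{n+1}(f) = \pi_n \circ f \circ \varepsilon_n$ for any $f : D_{n+1} \to D_{n+1}$. The bottom $\bot_{D_{n+2}}$ of $D_{n+2} \equiv D_{n+1}^{D_{n+1}}$ is the function constantly equal to $\bot_{D_{n+1}}$, so $\pi_{n+1}(\bot_{D_{n+2}})$ evaluated at any $x : D_n$ gives $\pi_n(\bot_{D_{n+1}}\paren*{\varepsilon_n(x)}) = \pi_n(\bot_{D_{n+1}})$, which equals $\bot_{D_n}$ by the inductive hypothesis. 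Hence $\pi_{n+1}(\bot_{D_{n+2}})$ is pointwise equal to the constant $\bot_{D_n}$ function, i.e.\ equal (by function extensionality, noting continuity is a proposition) to $\bot_{D_{n+1}}$.

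For the second claim, induct on $k \colonequiv m - n$. If $k = 0$, then $\pi_{n,n} = \id$, which is trivially strict. If $k = l+1$, then by the defining clause $\pi_{n,m} = \pi_{n,n+l} \circ \pi_{n+l}$; the first factor is strict by the inductive hypothesis on $k$, the second is strict by the first part of the lemma, and a composition of strict continuous maps is strict, so $\pi_{n,m}(\bot_{D_m}) = \pi_{n,n+l}(\pi_{n+l}(\bot_{D_m})) = \pi_{n,n+l}(\bot_{D_{n+l}}) = \bot_{D_n}$.

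There is essentially no obstacle here: the argument is a routine double induction. The only point demanding a bit of care is recognising that $\bot$ in each $D_{n+1}$ is the constant-$\bot$ function (so the composite $\pi_n \circ \bot \circ \varepsilon_n$ collapses to a constant), and appealing to function extensionality plus propositionality of continuity to turn pointwise equality of underlying maps into equality of elements of the exponential dcpo.
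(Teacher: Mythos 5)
Your proof is correct and follows exactly the same strategy as the paper, which simply states that the first claim is proved by induction on $n$ and the second by induction on $k \colonequiv m - n$; you have merely supplied the routine details that the paper leaves implicit.
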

  \begin{proof}
  The first statement is proved by induction on \(n\). The second by induction
  on \(k\) with \(k \colonequiv m - n\).
\end{proof}

\begin{theorem}\label{isomorphic-to-self-exponential}
  The dcpo \(D_\infty\) is pointed and isomorphic to \(D_{\infty}^{D_{\infty}}\).
\end{theorem}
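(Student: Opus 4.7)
First, $D_\infty$ is pointed. The candidate least element is the family $\sigma_\bot$ with $(\sigma_\bot)_n \colonequiv \bot_{D_n}$; it lies in $D_\infty$ because \cref{pi-is-strict} gives $\pi_{n,m}(\bot_{D_m}) = \bot_{D_n}$ whenever $n \leq m$, and it is the least element because the order on $D_\infty$ is pointwise.

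For the isomorphism, the strategy is to use the universal properties in \cref{limit,colimit} to produce maps in both directions, exploiting that $D_{n+1} \equiv D_n^{D_n}$. Each element $f : D_{n+1}$ yields a continuous endofunction $\iota_n(f) \colonequiv \varepsilon_{n,\infty} \circ f \circ \pi_{n,\infty} : D_\infty \to D_\infty$, and each continuous $g : D_\infty \to D_\infty$ yields an element $\kappa_n(g) \colonequiv \pi_{n,\infty} \circ g \circ \varepsilon_{n,\infty}$ of $D_{n+1}$. Regarding $(D_{n+1})_{n : \natt}$ as a shifted (but cofinal) subdiagram of the original sequence, $(\iota_n)$ is a cocone and $(\kappa_n)$ is a cone on $D_\infty^{D_\infty}$. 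Applying \cref{colimit} to $(\iota_n)$ produces a continuous $\Phi : D_\infty \to D_\infty^{D_\infty}$, and applying \cref{limit} to $(\kappa_n)$ produces a continuous $\Psi : D_\infty^{D_\infty} \to D_\infty$. Checking the compatibility of the families $(\iota_n)$ and $(\kappa_n)$ with the maps in the shifted diagram is routine diagram chasing using the identities $\pi_{n,n+1} \circ \varepsilon_{n,n+1} = \id$ and the interplay between $\varepsilon,\pi$ and $\varepsilon_{\cdot,\infty},\pi_{\cdot,\infty}$ from earlier in the section.

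The main obstacle is verifying that $\Phi$ and $\Psi$ are mutually inverse. The identity $\Psi \circ \Phi \sim \id_{D_\infty}$ is the easier direction and reduces essentially to $\pi_{n,\infty} \circ \varepsilon_{n,\infty} = \id_{D_n}$. The reverse identity $\Phi \circ \Psi \sim \id$ requires the approximation formula
\[
g \;=\; \bigsqcup_{n : \natt}\, \varepsilon_{n,\infty} \circ \pi_{n,\infty} \circ g \circ \varepsilon_{n,\infty} \circ \pi_{n,\infty}
\]
for every continuous $g : D_\infty \to D_\infty$. By continuity of $g$, this reduces to the pointwise identity $\sigma = \bigsqcup_n \varepsilon_{n,\infty}(\pi_{n,\infty}(\sigma))$ for $\sigma : D_\infty$, which is precisely the fact recorded in the proof of \cref{colimit}. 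Additional care is needed to ensure that all families in sight are directed, so that the predicative construction of suprema applies, and to confirm that the universe levels of $D_\infty^{D_\infty}$ are compatible with the statements of \cref{limit,colimit}.
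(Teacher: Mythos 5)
Your proof follows essentially the same approach as the paper's: pointedness via \cref{pi-is-strict} and the pointwise order, the maps $\Phi$ and $\Psi$ built from the universal properties of \cref{limit,colimit} by exploiting $D_{n+1} \equiv D_n^{D_n}$ (your $\iota_n$ and $\kappa_n$ are exactly the paper's $\varepsilon'_{n+1}$ and $\pi'_{n+1}$), and the inverse verification reducing to the identity $\sigma = \bigsqcup_n \varepsilon_{n,\infty}(\sigma_n)$ recorded in the proof of \cref{colimit}. The one small wrinkle is that \cref{colimit} and \cref{limit} as stated require a (co)cone indexed over \emph{all} of $\natt$, so appealing to a ``shifted cofinal subdiagram'' needs an unstated cofinality lemma; the paper instead just closes the $n=0$ gap directly by setting $\varepsilon'_0 \colonequiv \varepsilon'_1 \circ \varepsilon_0$ and $\pi'_0 \colonequiv \pi_0 \circ \pi'_1$, which is the minimal fix you would also need.
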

  \begin{proof}
  Since every \(D_n\) is pointed, we can consider the function
  \(\sigma : \prod_{n : \natt} D_n\) given by
  \(\sigma(n) \colonequiv \bot_{D_n}\). Then \(\sigma\) is an element of
  \(D_{\infty}\) by \cref{pi-is-strict} and it is the least, so \(D_\infty\) is
  indeed pointed.

  We start by constructing a continuous function
  \(\varepsilon'_\infty : {D_\infty \to D_\infty^{D_\infty}}\). By
  \cref{colimit}, it suffices to define continuous functions
  \(\varepsilon'_n : D_n \to D_\infty^{D_\infty}\) for every natural number
  \(n\) such that \(\varepsilon'_m \circ \varepsilon_{n,m} \sim \varepsilon'_n\)
  whenever \(n \leq m\). We do so as follows:
  \begin{enumerate}[(i)]
  \item
    \(\varepsilon'_{n+1} : D_{n+1} \colonequiv D_n^{D_n} \to
    D_\infty^{D_\infty}\) is given by mapping a continuous function
    \(f : D_n \to D_n\) to the continuous composite
    \( D_\infty \xrightarrow{\pi_{n,\infty}} D_n \xrightarrow{f} D_n
    \xrightarrow{\varepsilon_{n,\infty}} D_\infty ; \)
  \item
    \(\varepsilon'_0 : D_0 \to D_\infty^{D_\infty}\)
    is defined as the continuous composite
    \(D_0 \xrightarrow{\varepsilon_{0}} D_1 \xrightarrow
    {\varepsilon'_{1}} D_\infty\).
  \end{enumerate}

  Next, we construct a continuous function
  \(\pi'_\infty : {D_\infty^{D_\infty}} \to D_\infty\). By \cref{limit}, it
  suffices to define continuous functions
  \(\pi'_n : D_n \to D_\infty^{D_\infty}\) for every natural number \(n\) such
  that \(\pi_{n,m} \circ \pi'_m \sim \pi'_n\) whenever \(n \leq m\). We do so as
  follows:
  \begin{enumerate}[(i)]
  \item \(\pi'_{n+1} : D_\infty^{D_\infty} \to D_{n+1} \colonequiv D_n^{D_n}\)
    is given by mapping a continuous function \(f : D_\infty \to D_\infty\) to
    the continuous composite
    \( D_n \xrightarrow{\varepsilon_{n,\infty}} D_\infty \xrightarrow{f}
    D_\infty \xrightarrow{\pi_{n,\infty}} D_n\);
  \item \(\pi'_0 : D_\infty^{D_\infty} \to D_0\) is defined as the continuous
    composite
    \(D_\infty \xrightarrow{\pi'_{1}} D_1 \xrightarrow {\pi_{0}} D_0\).
  \end{enumerate}

  It remains to prove that \(\varepsilon'_\infty\) and \(\pi'_\infty\) are
  inverses. To this end, it is convenient to have an alternative description of
  the maps \(\varepsilon'_\infty\) and \(\pi'_\infty\).
  \begin{equation}
    \text{For every \(\sigma : D_\infty\), we have
    \(\varepsilon'_\infty(\sigma) = \bigsqcup_{n : \natt}\varepsilon'_{n+1}
    \paren*{\sigma_{n+1}}\).}
  \end{equation}
  \begin{equation}
    \text{For every continuous \(f : D_\infty \to D_\infty\), we
    have
    \(\pi'_\infty(f) = \bigsqcup_{n : \natt} \epsilon_{n+1,\infty}
      \paren*{\pi'_{n+1} \paren*{f}}\).}
  \end{equation}
  Using these equations we can prove that \(\varepsilon'_\infty\) and
  \(\pi'_\infty\) are inverses exactly as in~\cite[Proof~of~Theorem~4.4]{Scott1972}.
\end{proof}

\begin{remark}
  Of course, \cref{isomorphic-to-self-exponential} is only interesting in case
  \(D_\infty \not\simeq \unitt\). Fortunately,
  \(D_\infty\) has (infinitely) many elements besides
  \(\bot_{D_\infty}\). For instance, we can consider
  \(x_0 \colonequiv \eta(\star) : D_0\) and
  \(\sigma_0 : \prod_{n : \natt} D_n\) given by
  \(\sigma_0(n) \colonequiv \varepsilon_{0,n}(x_0)\). Then, \(\sigma_0\) is an
  element of \(D_\infty\) not equal to~\(\bot_{D_\infty}\),
  because \(x_0 \neq \bot_{D_0}\).
\end{remark}
\section{Continuous and Algebraic Dcpos} \label{continuous:and:algebraic:dcpos}
We next consider dcpos generated by certain elements called compact, or more
generally generated by a certain way-below relation, giving rise to algebraic
and continuous domains.

\subsection{The Way-below Relation}
\begin{definition}[Way-below relation, \(x \ll y\)] Let \(D\) be a
  \(\V \)-dcpo and \(x,y : D\). We say that
  \(x\)~is~\emph{way below}~\(y\), denoted by \(x \ll y\), if for every
  \(I : \V \) and directed family \(\alpha : I \to D\), whenever
  we have \(y \order \bigsqcup \alpha\), then there exists some element
  \(i : I\) such that \(x \order \alpha_i\) already. Symbolically,
  \[
    x \ll y \colonequiv \prod_{I : \V }\prod_{\alpha : I \to D}
    \paren*{ \isdirected(\alpha) \to y \order {\bigsqcup} \alpha \to \exists_{i
        : I}\, x \order \alpha_i }.
  \]
\end{definition}
\begin{lemma}\label{way-below-properties}
  The way-below relation enjoys the following properties.
  \begin{enumerate}[(i)]
  \item It is proposition-valued.
  \item If \(x \ll y\), then \(x \order y\).
  \item If \(x \order y \ll v \order w\), then \(x \ll w\).
  \item It is antisymmetric.
  \item It is transitive.
  \end{enumerate}
\end{lemma}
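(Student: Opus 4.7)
My plan is to handle the five properties in the order they are stated, since each later one will rely on the earlier ones. Throughout I will make essential use of function extensionality (for items (i), and implicitly via propositional truncation elsewhere) and of the fact that $\V$-dcpos have directed suprema of $\V$-indexed families.

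For (i), I would unfold the definition: $x \ll y$ is a nested $\Pi$-type whose innermost body is a propositionally truncated $\Sigma$-type, hence a proposition. Since propositions form an exponential ideal under function extensionality, the whole expression is a proposition. For (ii), I would take the constant directed family $\alpha : \unitt_{\V} \to D$ with value $y$ (which is trivially directed and indexed by a type in $\V$). Its supremum is $y$, so $y \order \bigsqcup \alpha$, and the way-below assumption produces $i : \unitt_{\V}$ with $x \order \alpha_i = y$; since the conclusion $x \order y$ is a proposition, we may discharge the propositional truncation.

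For (iii), given $x \order y \ll v \order w$ and a directed $\V$-indexed family $\alpha$ with $w \order \bigsqcup \alpha$, I use transitivity of $\order$ to obtain $v \order \bigsqcup \alpha$. Since $y \ll v$, this gives $\exists_{i}\, y \order \alpha_i$, and composing with $x \order y$ yields $\exists_{i}\, x \order \alpha_i$ (again using that the conclusion is a proposition to peel the truncation). This establishes $x \ll w$. Part (iv) is then immediate: if $x \ll y$ and $y \ll x$, item (ii) gives $x \order y$ and $y \order x$, and antisymmetry of $\order$ in the underlying poset concludes $x = y$. For (v), given $x \ll y$ and $y \ll z$, I apply (iii) with reflexivity $x \order x$ on the left and $y \order z$ (which holds by (ii) applied to $y \ll z$) on the right, obtaining $x \ll z$.

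I do not expect any real obstacle: (i) is a routine universe/propositional check, (ii) is the standard trick of testing way-below against a constant family, and (iii)--(v) are algebraic manipulations. The only minor subtlety to flag is that in (ii) and (iii) one must be careful to pick the indexing type in the universe $\V$ (using $\unitt_{\V}$ rather than some arbitrary $\unitt$), so that the way-below hypothesis is applicable; and in (ii) one must also remember that the supremum of the constant family indexed by $\unitt_{\V}$ is indeed $y$, which follows from antisymmetry since $y$ is clearly an upper bound and any upper bound dominates $y = \alpha(\star)$.
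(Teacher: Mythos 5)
Your proof is correct and follows the standard, expected argument for each item: (i) is a universe/propositional check using function extensionality and the closure of propositions under $\Pi$ and truncation; (ii) tests way-below against the constant family on $\unitt_{\V}$; (iii) is a direct unfolding with transitivity of $\order$; and (iv), (v) follow from (ii) and (iii). The paper omits the proof, but this is precisely what is intended, and your remark about choosing $\unitt_{\V}$ (rather than a unit type in some other universe) so that the way-below hypothesis applies is the right subtlety to flag.
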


\begin{lemma}\label{order-in-terms-of-way-below}
  Let \(D\) be a dcpo. Then \(x \order y\) implies
  \(\prod_{z : D} \paren*{z \ll x \to z \ll y}\).
\end{lemma}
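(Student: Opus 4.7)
The plan is to derive this immediately from property (iii) of \cref{way-below-properties}, which states that $x \order y \ll v \order w$ implies $x \ll w$. That property is exactly the ``two-sided composition'' of the way-below relation with the order relation on both sides, so by taking reflexivity on one side it specializes to the statement we want.

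Concretely, suppose $x \order y$ and let $z : D$ with $z \ll x$. To conclude $z \ll y$, I instantiate property (iii) with the chain $z \order z \ll x \order y$: the first $\order$ is reflexivity of the partial order on $D$, the middle $\ll$ is the hypothesis $z \ll x$, and the last $\order$ is the hypothesis $x \order y$. The conclusion of (iii) then gives $z \ll y$, as desired.

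There is no genuine obstacle here; the lemma is essentially a one-line corollary of the preceding \cref{way-below-properties}(iii). The only thing to note is that the statement is phrased as a $\Pi$-type over $z : D$, so formally one would introduce $z$ and its hypothesis $z \ll x$ before invoking (iii). No directedness arguments, no explicit manipulation of suprema, and no appeal to the definition of $\ll$ itself are needed beyond what is already packaged into \cref{way-below-properties}.
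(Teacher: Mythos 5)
Your proof is correct and matches the paper exactly: the paper's proof is simply ``By \cref{way-below-properties}(iii)'', and your instantiation $z \order z \ll x \order y$ (with reflexivity on the left) is the intended reading.
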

\begin{proof}
  By \cref{way-below-properties}(iii).
\end{proof}
\begin{definition}[Compact]
  Let \(D\) be a dcpo. An element \(x : D\) is called
  \emph{compact} if \(x \ll x\).
\end{definition}
\begin{example}
  The least element of a pointed dcpo is always compact.
\end{example}
\begin{example}[Compact elements in \(\lifting_{\V }(X)\)]
  \label{compact-elements-in-lifting}
  Let \(X : \U \) be a set. An element
  \((P,\varphi) : \lifting_{\V }(X)\) is compact if and only if \(P\) is
  decidable.
    \begin{claimproof}
    Suppose that \((P,\varphi)\) is compact. We must show that \(P\) is
    decidable. Consider the family
    \(\alpha : \paren*{\unitt_{\U } + P} \to \lifting_{\V }(X)\)
    given by \(\inl(\star) \mapsto \bot\) and \(\inr(p) \mapsto
    (P,\varphi)\). This is directed, so \(\alpha\) has a supremum in
    \(\lifting_{\V }(X)\). Observe that
    \((P,\varphi) \order \bigsqcup \alpha\) holds. Hence, by assumption that
    \((P,\varphi)\) is compact, we have
    \(\exists_{i : {\unitt_{\U } + P}} \paren*{(P,\varphi) \order
      \alpha_i}\). Since decidability of \(P\) is a proposition, we obtain
    \(i : {\unitt_{\U } + P}\) such that \((P,\varphi) \order
    \alpha_i\). There are two cases: \(i = \inl(\star)\) or \(i = \inr(p)\). In
    the first case, \((P,\varphi) \order \bot\), so \(\lnot P\). In the second
    case, we have \(P\). Hence, \(P\) is decidable.

    Conversely, suppose that we have \((P,\varphi) : \lifting_{\V }(X)\)
    with \(P\) decidable. There are two cases: either \(\lnot P\) or \(P\). If
    \(\lnot P\), then \((P,\varphi)\) is the least element of
    \(\lifting_{\V }(X)\), so it is compact. If \(P\), then let
    \(\alpha : I \to \lifting_{\V }(X)\) be a directed family with
    \((P,\varphi) \order \bigsqcup \alpha\).  Since \(P\) holds, we get the
    equality \((P,\varphi) = \bigsqcup \alpha\) and
    \(\paren*{\bigsqcup\alpha}\isdefined\). Recalling
    \cref{construction-lifting}, this means that we have
    \(\exists_{i : I} \paren*{\alpha_i} \isdefined\). Hence,
    \(\exists_{i : I} \paren*{(P,\varphi) \order \alpha_i}\) holds as well,
    finishing the proof.
  \end{claimproof}
\end{example}
\begin{definition}[Kuratowski finite]
  A type \(X\) is \emph{Kuratowski finite} if there exists some natural number
  \(n : \natt\) and a surjection \(e : \Fin(n) \twoheadrightarrow X\).
\end{definition}
That is, \(X\) is \emph{Kuratowski finite} if its elements can be finitely
enumerated, possibly with repetitions.

\begin{example}[Compact elements in \(\powerset_{\U }(X)\)]
  \label{compact-elements-in-powerset}
  Let \(X : \U \) be a set. An element \(A : \powerset_{\U }(X)\)
  is compact if and only if its total type \(\mathbb T A\) is Kuratowski finite.
  \begin{claimproof}
    Write \(\iota : \List(X) \to \powerset_{\U}(X)\) for the map that regards a
    list on \(X\) as a subset of \(X\). The inductively generated type
    \(\List(X)\) of lists on \(X\) lives in the same universe \(\U\) as \(X\).

    Suppose that \(A\) is compact. We must show that \(\mathbb T(A)\) is
    Kuratowski finite. Consider the map
    \(\alpha : \List(\mathbb T(A)) \to \powerset_{\U (X)}\) which takes a list
    \([\paren*{x_0,p_0},\dots,\paren*{x_{n-1},p_{n-1}}]\) to
    \(\iota\paren*{[x_0,\dots,x_{n-1}]}\). Since the empty list is an element of
    \(\List(\mathbb T(A))\) and because we can concatenate lists, \(\alpha\) is
    directed. Moreover, \(\List(\mathbb T(A)) : \U \) and
    \(A = \bigsqcup \alpha\) holds. Hence, by compactness, there exists some
    \(l \colonequiv [\paren*{x_0,p_0},\dots,\paren*{x_{n-1},p_{n-1}}] :
    \List(\mathbb T(A))\) such that \(A \subseteq \alpha(l)\) already. Hence,
    the map \(m : \Fin(n) \mapsto (x_m,p_m) : \mathbb T(A)\) is a surjection, so
    \(\mathbb T(A)\) is Kuratowski finite.

    Conversely, suppose that \(A\) is a subset such that \(\mathbb T(A)\) is
    Kuratowski finite. We must prove that it is compact. Let
    \(B_{(-)} : I \to \powerset_{\U }(X)\) be directed such that
    \(A \subseteq \bigsqcup_{i : I} B_i\). Since
    \(\exists_{i : I} A \subseteq B_i\) is a proposition, we can use Kuratowski
    finiteness of \(\mathbb T(A)\) to obtain a natural number \(n\) and a
    surjection \(e : \Fin(n) \twoheadrightarrow \mathbb T(A)\). For each
    \(m : \Fin(n)\), find \(i_m\) such that \(e_m \in B_{i_m}\). By directedness
    of \(I\), there exists \(k : I\) such that \(e_m \in B_k\) for every
    \(m : \Fin(n)\). Hence, \(\exists_{k : I}\,A \subseteq B_k\), as desired.
  \end{claimproof}
\end{example}

\subsection{Continuous Dcpos}
Classically, a continuous dcpo is a dcpo where every element is the directed
join of the set of elements way below it \cite{AmadioCurien1998}. Predicatively,
we must be careful, because if \(x\) is an element of a dcpo \(D\), then
\(\sum_{y : D} y \ll x\) is typically large, so its directed join need not exist
for size reasons. Our solution is to use a predicative version of bases
\cite{AbramskyJung1994} that accounts for size issues. For the special case of
algebraic dcpos, our situation is the poset analogue of accessible categories
\cite{AdamekRosicky1994}. Indeed, in category theory requiring smallness is
common, even in impredicative settings, see for instance
\cite{JohnstoneJoyal1982}, where continuous dcpos are generalized to continuous
categories.

\begin{definition}[Basis, approximating family]
  A \emph{basis} for \(\V \)-dcpo \(D\) is a function
  \(\beta : B \to D\) with \(B : \V \) such that for every
  \(x : D\) there exists some \(\alpha : I \to B\) with
  \(I : \V \) such that
  \begin{enumerate}[(i)]
  \item \(\beta \circ \alpha\) is directed and its supremum is \(x\);
  \item \(\beta(\alpha_i) \ll x\) for every \(i : I\).
  \end{enumerate}
  We summarise these requirements by saying that \(\alpha\) is an
  \emph{approximating family} for \(x\).

  Moreover, we require that \(\ll\) is small when restricted to the basis. That
  is, we have \({\ll^B} : {B \to B \to \V }\) such that
  \(\paren*{\beta(b) \ll \beta(b')} \simeq \paren*{b \ll^B b'}\) for every
  \(b,b' : B\).
\end{definition}

\begin{definition}[Continuous dcpo]
  A dcpo \(D\) is \emph{continuous} if there exists some basis for it.
\end{definition}

We postpone giving examples of continuous dcpos until we have developed the
theory further, but the interested reader may look ahead to
\Cref{lifting-is-algebraic,powerset-is-algebraic,ideal-completion-of-dyadics}.

A useful property of bases is that it allows us to express the order fully in
terms of the way-below relation, giving a converse to
\cref{order-in-terms-of-way-below}.
\begin{lemma}\label{order-in-terms-of-way-below'}
  Let \(D\) be a dcpo with basis \(\beta : B \to D\). Then
  \(x \order y\) holds if and only if
  \(\prod_{b : B} \paren*{\beta(b) \ll x \to \beta(b) \ll y}\).
\end{lemma}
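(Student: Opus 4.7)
The plan is to prove the two directions separately, with the forward direction being essentially a restatement of \cref{order-in-terms-of-way-below} and the backward direction requiring us to unpack the basis structure.

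For the forward direction, suppose $x \order y$ and fix $b : B$ with $\beta(b) \ll x$. Then $\beta(b) \ll x \order y$, so \cref{way-below-properties}(iii) (or equivalently \cref{order-in-terms-of-way-below} applied pointwise) yields $\beta(b) \ll y$. This requires no information about the basis.

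For the backward direction, assume $\prod_{b : B}(\beta(b) \ll x \to \beta(b) \ll y)$; our goal is $x \order y$. Because $x \order y$ is a proposition, I may eliminate the propositional truncation hidden in the ``there exists'' of the definition of basis: pick an approximating family $\alpha : I \to B$ for $x$, so that $\beta \circ \alpha : I \to D$ is directed, $\bigsqcup \beta\circ\alpha = x$, and $\beta(\alpha_i) \ll x$ for every $i : I$. Applying the hypothesis fibrewise gives $\beta(\alpha_i) \ll y$ for every $i$, and hence $\beta(\alpha_i) \order y$ by \cref{way-below-properties}(ii). Thus $y$ is an upper bound of the directed family $\beta\circ\alpha$, and since $x$ is its least upper bound, we conclude $x \order y$.

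There is no real obstacle here. The only subtlety worth flagging is the size/truncation bookkeeping: the basis axiom asserts the existence of an approximating family only up to propositional truncation, so we need the target $x \order y$ to be a proposition in order to untruncate, which holds since $\order$ is proposition-valued by definition of a poset. Once this is noted, the argument is a single application of the universal property of the supremum.
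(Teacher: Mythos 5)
Your proof is correct and matches the paper's argument essentially verbatim: the forward direction is \cref{order-in-terms-of-way-below}, and the backward direction untruncates an approximating family for $x$ (justified because $x \order y$ is a proposition), applies the hypothesis and \cref{way-below-properties}(ii) fibrewise, and invokes the least-upper-bound property of the supremum. Nothing to add.
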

  \begin{proof}
  The left-to-right implication holds by \cref{order-in-terms-of-way-below}. For
  the converse, suppose that we have \(x,y : D\) such that for every
  \(\prod_{b : B}\paren*{\beta(b) \ll x \to \beta(b) \ll y}\). Since
  \(x\order y\) is a proposition, we can obtain \(\alpha : I \to B\) such that
  \(\beta \circ \alpha\) is directed and \(\bigsqcup \beta \circ \alpha = x\)
  and \(\beta(\alpha_i) \ll x\) for every \(i : I\). It then suffices to show
  that \(\bigsqcup \beta \circ \alpha \order y\). Since \(\bigsqcup\) gives the
  \emph{least} upper bound, it is enough to prove that
  \(\beta(\alpha_i) \order y\) for every \(i : I\), but this holds by our
  hypothesis, our assumption that \(\beta(\alpha_i) \ll x\) for every
  \(i : I\), and \cref{way-below-properties}(ii).
\end{proof}

\begin{lemma}\label{order-small-on-basis}
  Let \(D\) be a \(\V \)-dcpo with a basis
  \(\beta : B \to D\). Then \(\order\) is small when restricted to the
  basis, i.e.\ \(\beta(b_1) \order \beta(b_2)\)~has~size~\(\V \) for
  every two elements \(b_1,b_2 : B\). Hence, we have
  \({\order^B} : {B \to B \to \V }\) such that
  \(\prod_{b_1,b_2 : B} \paren*{b_1\order^B b_2} \simeq \paren*{\beta(b_1)
    \order \beta(b_2)}\).
\end{lemma}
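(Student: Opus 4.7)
The plan is to reduce the order $\sqsubseteq$ on basis elements to the way-below relation $\ll$ on basis elements, and then invoke the smallness of $\ll^B$ that comes packaged with the basis.

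Concretely, I would first apply \cref{order-in-terms-of-way-below'} to the elements $\beta(b_1), \beta(b_2) : D$, obtaining the logical equivalence
\[
  \beta(b_1) \order \beta(b_2) \iff \prod_{b : B}\paren*{\beta(b) \ll \beta(b_1) \to \beta(b) \ll \beta(b_2)}.
\]
Both sides are propositions (the left by definition of a poset; the right by function extensionality, since implications between propositions are propositions), so this logical equivalence upgrades to an equivalence of types. Next, I would transport through the basis-supplied equivalence $\paren*{\beta(b) \ll \beta(b')} \simeq \paren*{b \ll^B b'}$ in both the hypothesis and conclusion of the implication, yielding an equivalence
\[
  \paren*{\beta(b_1) \order \beta(b_2)} \simeq \prod_{b : B}\paren*{b \ll^B b_1 \to b \ll^B b_2}.
\]

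Define the candidate small order by
\[
  b_1 \order^B b_2 \colonequiv \prod_{b : B}\paren*{b \ll^B b_1 \to b \ll^B b_2}.
\]
Since $B : \V$ and $\ll^B$ takes values in $\V$, the implication $b \ll^B b_1 \to b \ll^B b_2$ lies in $\V$, and hence so does the $\Pi$-type over $B$. This exhibits $\beta(b_1) \order \beta(b_2)$ as having size $\V$, witnessed by $\order^B$, which is exactly what is required.

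No step here looks genuinely hard: the real content is already in \cref{order-in-terms-of-way-below'}. The only thing to be careful about is that the basis gives us smallness of $\ll$ pointwise (an equivalence for each pair), so the transport inside the $\Pi$-type is a routine application of the fact that $\Pi$ preserves fibrewise equivalences. One should also remember to invoke function extensionality to know the right-hand side is a proposition before claiming the logical equivalence is an equivalence of types.
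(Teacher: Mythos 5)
Your proposal is essentially the paper's own proof: both go through \cref{order-in-terms-of-way-below'} to rewrite $\beta(b_1) \order \beta(b_2)$ as $\prod_{b : B}\paren*{\beta(b) \ll \beta(b_1) \to \beta(b) \ll \beta(b_2)}$, then substitute the small relation $\ll^B$ using the equivalence supplied by the definition of a basis, and observe the result lies in $\V$. You spell out the proposition-to-equivalence upgrade and the fibrewise transport under $\Pi$ a bit more explicitly than the paper does, but the argument is the same.
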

  \begin{proof}
  Let \(b_1,b_2 : B\) and note that we have the following equivalences:
  \begin{align*}
    \paren*{\beta(b_1) \order \beta(b_2)} & \simeq
    \prod_{b : B}\paren*{\beta(b) \ll \beta(b_1) \to \beta(b) \ll \beta(b_2)}
    && \text{(by \cref{order-in-terms-of-way-below'})} \\
    & \simeq \prod_{b : B}\paren*{b \ll^B b_1 \to b \ll^B b_2}
    && \text{(by definition of a basis)},
  \end{align*}
  but the latter is a type in \(\V \).
\end{proof}
The most significant properties of a basis are the interpolation properties. We
consider nullary, unary and binary versions here. The binary interpolation
property actually follows fairy easily from the unary one, but we still record
it, because we wish to show that bases are examples of the abstract bases that
we define later (cf.\ \cref{bases-are-abstract-bases}). Our proof of unary
interpolation is a predicative version of \cite{Escardo2000}.

\begin{lemma}[Nullary interpolation]
  \label{nullary-interpolation}
  Let \(D\) be a dcpo with a basis \(\beta : B \to D\). For
  every \(x : D\), there exists some \(b : B\) such that
  \(\beta(b) \ll x\).
\end{lemma}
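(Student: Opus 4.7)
The statement is a propositional (existentially quantified) claim, so I am free to eliminate propositional truncations along the way. The plan is simply to unpack the definition of a basis and use the inhabitedness built into the notion of a directed family.

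Fix $x : D$. By the definition of a basis applied to $x$, we obtain (after eliminating the outer truncation, which is legal since the goal $\exists_{b : B}\, \beta(b) \ll x$ is itself a proposition) an approximating family $\alpha : I \to B$ with $I : \V$ such that $\beta \circ \alpha$ is directed with supremum $x$, and with $\beta(\alpha_i) \ll x$ for every $i : I$. Since $\beta \circ \alpha$ is directed, by \cref{directed-family} the type $I$ is inhabited, i.e.\ $\squash*{I}$ holds.

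Again since the goal is a proposition, I may extract an actual element $i : I$ from $\squash*{I}$. Then $b \colonequiv \alpha_i : B$ witnesses $\beta(b) \ll x$ by the second clause of the approximating-family condition, so $\tosquash{(b, \beta(b) \ll x)} : \exists_{b : B}\, \beta(b) \ll x$.

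There is essentially no obstacle: the proof is a direct unfolding of definitions, and the only subtle point is that twice we need to eliminate a propositional truncation into a propositional goal, which is permitted by the universal property of $\squash*{-}$. No properties of the way-below relation (beyond what is already packaged into the definition of ``approximating family'') are required.
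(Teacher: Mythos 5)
Your proof is correct and is exactly what the paper means by ``Immediate from the definitions of a basis and a directed family'': unpack the truncated existence of an approximating family, then use the inhabitedness clause of directedness to extract an index, eliminating propositional truncations into the propositional goal both times. No comparison needed beyond noting that the paper simply left these unfoldings implicit.
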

\begin{proof}
  Immediate from the definitions of a basis and a directed family.
\end{proof}
\begin{lemma}[Unary interpolation]
  \label{unary-interpolation}
  Let \(D\) be a \(\V \)-dcpo with basis
  \(\beta : B \to D\) and let \(x,y : {D}\). If \(x \ll y\),
  then there exists some \(b : B\) such that \(x \ll \beta(b) \ll y\).
\end{lemma}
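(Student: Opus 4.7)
The plan is to use the basis approximation \emph{twice}: first express $y$ as the directed supremum of elements $\beta(b)$ with $\beta(b) \ll y$, then expand each such $\beta(b)$ as the directed supremum of elements way below it. The resulting doubly-indexed family still sups to $y$, and feeding it to the hypothesis $x \ll y$ will pick out an interpolating basis element.

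Since we are proving the propositional statement $\exists_{b : B}\paren*{x \ll \beta(b) \times \beta(b) \ll y}$, we may extract witnesses from the propositional truncations supplied by the basis property. Invoke the basis at $y$ to obtain an approximating family $\alpha : I \to B$ with $\bigsqcup_i \beta(\alpha_i) = y$ and $\beta(\alpha_i) \ll y$ for every $i$; and for each $i : I$, invoke the basis at $\beta(\alpha_i)$ to obtain an approximating family $\alpha^i : J_i \to B$ with $\bigsqcup_j \beta(\alpha^i_j) = \beta(\alpha_i)$ and $\beta(\alpha^i_j) \ll \beta(\alpha_i)$ for every $j$. Define
\[
F : \paren*{\sum_{i : I} J_i} \to D, \quad F(i,j) \colonequiv \beta(\alpha^i_j).
\]
The index type lives in $\V$ because $I, J_i : \V$, which is exactly what will let us apply the hypothesis $x \ll y$ to $F$.

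The key step is to check that $F$ is directed with supremum $y$. For directedness, given $(i_1, j_1)$ and $(i_2, j_2)$, use directedness of $\beta \circ \alpha$ to find $i_3 : I$ with $\beta(\alpha_{i_1}), \beta(\alpha_{i_2}) \order \beta(\alpha_{i_3}) = \bigsqcup_j \beta(\alpha^{i_3}_j)$. Since $\beta(\alpha^{i_1}_{j_1}) \ll \beta(\alpha_{i_1}) \order \bigsqcup_j \beta(\alpha^{i_3}_j)$, the defining property of $\ll$ applied to the directed family $\beta \circ \alpha^{i_3}$ yields some $j'_1 : J_{i_3}$ with $\beta(\alpha^{i_1}_{j_1}) \order \beta(\alpha^{i_3}_{j'_1})$; analogously produce $j'_2 : J_{i_3}$ for the second pair; and directedness of $\beta \circ \alpha^{i_3}$ supplies $j_3 : J_{i_3}$ dominating both, so $(i_3, j_3)$ dominates $(i_1, j_1)$ and $(i_2, j_2)$ under $F$. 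For the supremum, each $F(i,j) \order \beta(\alpha_i) \order y$ shows $y$ is an upper bound, and any upper bound $z$ of $F$ satisfies $\beta(\alpha_i) = \bigsqcup_j \beta(\alpha^i_j) \order z$ for each $i$, hence $y = \bigsqcup_i \beta(\alpha_i) \order z$.

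Finally, applying $x \ll y = \bigsqcup F$ to the directed family $F$ yields $(i, j)$ with $x \order \beta(\alpha^i_j)$. Combining with $\beta(\alpha^i_j) \ll \beta(\alpha_i)$ and \cref{way-below-properties}(iii) gives $x \ll \beta(\alpha_i)$, and we already have $\beta(\alpha_i) \ll y$, so $b \colonequiv \alpha_i$ is the desired interpolant. The main obstacle is the directedness verification: one must apply the way-below hypothesis $\beta(\alpha^{i_1}_{j_1}) \ll \beta(\alpha_{i_1})$ against the \emph{different} directed family $\beta \circ \alpha^{i_3}$, and this cross-transfer is precisely what forces the doubly-iterated basis expansion rather than a single one, and what requires both $I$ and each $J_i$ to lie in $\V$.
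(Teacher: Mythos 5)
Your overall plan---expand $y$ twice through the basis and feed the resulting family to $x \ll y$---is the same idea as the paper's, but the way you realize it introduces a use of choice that is not available in this setting. The basis property gives, for each element, a \emph{truncated} existence of an approximating family. Extracting a single such family is fine when proving a proposition (as you correctly note for $\alpha : I \to B$). But your construction of $F : \paren*{\sum_{i : I} J_i} \to D$ requires, for \emph{every} $i : I$ simultaneously, a specific choice of $(J_i, \alpha^i)$. This is a choice function $i \mapsto (J_i, \alpha^i)$ extracted from a family of truncated existentials, and the data being chosen (a type together with a function into $B$) is not a proposition, so the universal property of truncation does not apply. Without such a choice function, the index type $\sum_{i:I} J_i$ and the family $F$ are not even well-defined, let alone shown to lie in $\V$.

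The paper sidesteps this by indexing the auxiliary family over
\[
K \colonequiv \sum_{b : B}\sum_{i : I} b \ll^B \alpha_i
\]
and mapping $K \xrightarrow{\fst} B \xrightarrow{\beta} D$. This captures ``all basis elements way below some $\beta(\alpha_i)$'' without ever having to \emph{select} a second-level approximating family; the extra invocations of the basis (to show $K$ is directed, and that $y \order \bigsqcup$) happen inside proofs of propositions, where the truncated existentials may be freely opened. The smallness assumption $\ll^B$ is then exactly what makes $K : \V$. To repair your proof, replace $\sum_{i:I}J_i$ with this choice-free $K$ and adapt the directedness and supremum arguments accordingly; your cross-transfer argument for directedness (applying a way-below against a different directed family) then survives, but applied locally inside the propositional goals rather than globally.
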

\begin{proof}
  Let \(x,y : D\) with
  \(x \ll y\). Since \(\beta\) is a basis, there exists an approximating family
  \(\alpha : I \to B\) for \(y\). Consider the family
  \begin{equation*}\label{auxiliary-family}
    \paren*{K \colonequiv {\sum_{b : B} \sum_{i : I} b \ll^B \alpha_i} : \V }
    \xrightarrow{\fst} B \xrightarrow{\beta} D.
    \tag{\(\dagger\)}
  \end{equation*}
  \begin{claim*}
    The family \eqref{auxiliary-family} is directed.
  \end{claim*}
  \begin{claimproof}
    By directedness of \(\alpha\) and nullary interpolation, the type \(K\) is
    inhabited.

    Now suppose that we have \(b_1,b_2 : B\) and \(i_1,i_2 : I\) with
    \(b_1 \ll^B \alpha_{i_1}\) and \(b_2 \ll^B \alpha_{i_2}\). By directedness
    of \(\alpha\), there exists \(k : I\) with
    \(\alpha_{i_1},\alpha_{i_2} \order^B \alpha_{k}\). Since \(\beta\) is a
    basis for \(D\), there exists an approximating family \(\gamma : J \to B\)
    for \(\beta\paren*{\alpha_k}\). From \(b_1 \ll^B \alpha_{i_1}\) we obtain
    \(b_1 \ll^B \alpha_k\) and similarly, \(b_2 \ll^B \alpha_k\). Hence, there
    exist \(j_1,j_2 : J\) such that \(b_1 \order^B \gamma_{j_1}\) and
    \(b_2 \order^B \gamma_{j_2}\). By directedness of \(J\), there exists
    \(m : J\) with \(\gamma_{j_1},\gamma_{j_2} \order^B \gamma_{m}\). Thus,
    putting this all together, we see that:
    \( b_1,b_2 \order^B \gamma_{j_m} \ll^B \alpha_k.  \) Hence,
    \eqref{auxiliary-family} is directed.
  \end{claimproof}

  Thus, \eqref{auxiliary-family} has a supremum \(s\) in \(D\).

  \begin{claim*}
    We have \(y \order s\).
  \end{claim*}
  \begin{claimproof}
    Since \(y = \bigsqcup \beta \circ \alpha\), it suffices to prove that
    \(\beta\paren*{\alpha_i} \order s\) for every \(i : I\). Let \(i : I\) be
    arbitrary and let \(\gamma_j : J \to B\) be some approximating family for
    \(\beta\paren*{\alpha_i}\). Then it is enough to establish
    \(\beta\paren*{\gamma_j} \order s\) for every \(j : J\). But we know that
    \(\gamma_j \ll^B \alpha_i\), so \(\beta_{\gamma_j} \order s\) by definition
    of \eqref{auxiliary-family} and the fact that \(s\) is the supremum of
    \eqref{auxiliary-family}.
  \end{claimproof}

  Finally, from \(y \order s\) and \(x \ll y\), it follows that there must exist
  \(b : B\) and \(i : I\) such that:
  \(x \order \beta(b) \ll \beta(\alpha_i) \ll y\), which finishes the proof.
\end{proof}
\begin{lemma}[Binary interpolation]
  \label{binary-interpolation}
  Let \(D\) be a \(\V \)-dcpo with basis
  \(\beta : B \to D\) and let \(x,y,z : {D}\). If
  \({x,y} \ll z\), then there exists some \(b : B\) such that
  \({x,y} \ll \beta(b) \ll z\).
\end{lemma}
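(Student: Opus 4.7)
The plan is to reduce binary interpolation to unary interpolation together with the directedness of the approximating family for $z$. First, I would apply \cref{unary-interpolation} twice: to $x \ll z$ to obtain some $b_1 : B$ with $x \ll \beta(b_1) \ll z$, and to $y \ll z$ to obtain some $b_2 : B$ with $y \ll \beta(b_2) \ll z$. Since the conclusion we are after is existentially quantified (hence a proposition), we are free to extract honest witnesses from these truncated existentials throughout the proof.

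Next, I would merge $b_1$ and $b_2$ into a single basis element using an approximating family for $z$. Let $\alpha : I \to B$ be such a family, so $\beta \circ \alpha$ is directed with supremum $z$ and $\beta(\alpha_i) \ll z$ for each $i : I$. Since $\beta(b_1) \ll z = \bigsqcup \beta \circ \alpha$, the definition of the way-below relation yields some $i_1 : I$ with $\beta(b_1) \order \beta(\alpha_{i_1})$, and analogously some $i_2 : I$ with $\beta(b_2) \order \beta(\alpha_{i_2})$. Directedness of $\beta \circ \alpha$ then provides $k : I$ with $\beta(\alpha_{i_1}), \beta(\alpha_{i_2}) \order \beta(\alpha_k)$.

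To conclude, I would take $b \colonequiv \alpha_k$. Then $\beta(b) = \beta(\alpha_k) \ll z$ because $\alpha$ is an approximating family, while $x \ll \beta(b_1) \order \beta(\alpha_k) = \beta(b)$ gives $x \ll \beta(b)$ by \cref{way-below-properties}(iii), and similarly $y \ll \beta(b)$. There is no real obstacle: the substantive content lives entirely in unary interpolation, and the second half is a routine application of directedness of the approximating family combined with the transitivity property of $\ll$.
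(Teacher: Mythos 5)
Your proof is correct and follows essentially the same route as the paper's: apply unary interpolation twice, then use an approximating family for \(z\) together with the way-below relation and directedness to find a common basis element, and close with \cref{way-below-properties}(iii).
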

\begin{proof}
  Let \(x,y,z : D\) such that \(x,y \ll z\). By unary interpolation,
  there are \(b_x, b_y : B\) such that \(x \ll \beta(b_x) \ll z\) and
  \(y \ll \beta(b_y) \ll z\). Since \(\beta\) is a basis, there exists a family
  \(\alpha : I \to B\) such that \(\beta(\alpha_i) \ll z\) for every \(i : I\),
  and \(\beta \circ \alpha\) is directed and has supremum \(z\). Since
  \(\beta(b_x) \ll z\), there must exists \(i_x : I\) with
  \(\beta(b_x) \order \beta\paren*{\alpha_{i_x}}\). Similarly, there exists
  \(i_y : I\) such that \(\beta(b_y) \order \beta\paren*{\alpha_{i_y}}\). By
  directedness of \(\beta \circ \alpha\), there exists \(k : I\) with
  \(\beta\paren*{\alpha_{i_x}},\beta\paren*{\alpha_{i_y}} \order
  \beta\paren*{\alpha_k}\). Hence,
  \[
    x \ll {\beta\paren*{b_x}} \order {\beta\paren*{\alpha_{i_x}}} \order
    {\beta\paren*{\alpha_k}} \ll z \qquad\text{and}\qquad y \ll
    {\beta\paren*{b_y}} \order {\beta\paren*{\alpha_{i_y}}} \order
    {\beta\paren*{\alpha_k}} \ll z,
  \]
  so that \(x,y\ll \beta\paren*{\alpha_k} \ll z\), as wished.
\end{proof}

\subsection{Algebraic Dcpos}
We now turn to a particular class of continuous dcpos, called algebraic dcpos.
\begin{definition}[Algebraic dcpo]
  A dcpo \(D\) is \emph{algebraic} if there exists some basis
  \(\beta : B \to D\) for it such that \(\beta(b)\) is compact for
  every \(b : B\).
\end{definition}

\begin{lemma}\label{algebraic-criterion}
  Let \(D\) be a \(\V \)-dcpo. Then \(D\) is algebraic
  if and only if there exists \(\beta : B \to D\) with
  \(B : \V \) such that
  \begin{enumerate}[(i)]
  \item every element \(\beta(b)\) is compact;
  \item for every \(x : D\), there exists \(\alpha : I \to B\) with \(I : \V \)
    such that \(\beta \circ \alpha\) is directed and
    \(x = \bigsqcup \beta \circ \alpha\).
  \end{enumerate}
\end{lemma}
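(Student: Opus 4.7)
The forward direction is immediate from the definitions. If $D$ is algebraic, then by definition there is a basis $\beta : B \to D$ whose every image is compact, and such a basis automatically satisfies (i) and (ii) — condition (ii) is just the approximating family clause of \cref{basis} with the way-below requirement dropped.

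For the backward direction, given $\beta : B \to D$ satisfying (i) and (ii), I plan to show that $\beta$ is itself a basis of compact elements, from which algebraicity follows at once. Condition (i) supplies compactness, so it remains to verify the two clauses of the basis definition. For the approximating family clause: given $x : D$, (ii) supplies $\alpha : I \to B$ with $I : \V$, $\beta \circ \alpha$ directed, and supremum $x$; I then need $\beta(\alpha_i) \ll x$ for every $i : I$. This is the key use of compactness: by (i), $\beta(\alpha_i) \ll \beta(\alpha_i)$, and $\beta(\alpha_i) \order x$ since $x$ is the supremum of $\beta \circ \alpha$, so \cref{way-below-properties}(iii) yields $\beta(\alpha_i) \ll x$. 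Hence the very same $\alpha$ furnished by (ii) is a full-fledged approximating family.

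The remaining basis requirement is that $\ll$ be $\V$-small on $B$. Using compactness again, $\beta(b) \ll \beta(b') \iff \beta(b) \order \beta(b')$: the forward direction is \cref{way-below-properties}(ii), while the converse follows from $\beta(b) \order \beta(b') $ and $\beta(b') \ll \beta(b')$ via \cref{way-below-properties}(iii). So the task reduces to exhibiting a $\V$-sized equivalent of $\order$ between basis elements. My plan is an augmentation trick: if $\beta(b) \ll \beta(b')$, then adjoining $b$ at a fresh index to any approximating family $\alpha : I \to B$ for $\beta(b')$ — forming $\alpha' : I + \unitt \to B$ with $\inl(i) \mapsto \alpha_i$ and $\inr(\star) \mapsto b$ — yields another approximating family for $\beta(b')$. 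Directedness of $\beta \circ \alpha'$ uses that compactness of $\beta(b)$, combined with $\beta(b) \order \bigsqcup \beta \circ \alpha$, furnishes some $i : I$ with $\beta(b) \order \beta(\alpha_i)$, so any pair involving $b$ can be dominated within the original family. The supremum is unchanged since $\beta(b) \order \beta(b')$. This lets one express $\beta(b) \ll \beta(b')$ as ``$b$ literally occurs in some approximating family for $\beta(b')$,'' reducing the condition to an equality in the $\V$-typed set $B$.

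The main obstacle is this last step: packaging the augmented-family-membership data as a genuinely $\V$-small proposition, since the directedness and supremum conditions on an approximating family nominally involve $\order_D$ at universe $\T$. I expect the cleanest route is to define $\ll^B$ by propositional truncation of the augmented data, then verify equivalence with $\ll$ on $\beta(B)$ by using the two directions established above, so that neither direction of the equivalence needs to expose the $\T$-valued internals.
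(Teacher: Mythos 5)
Your treatment of the forward direction and of the approximating\hyp family clause in the backward direction is exactly the paper's argument: compactness gives \(\beta(\alpha_i) \ll \beta(\alpha_i) \order \bigsqcup \beta \circ \alpha = x\), and \cref{way-below-properties}(iii) then yields \(\beta(\alpha_i) \ll x\), so the \(\alpha\) supplied by (ii) already serves as an approximating family. The paper's own proof in fact stops there, presenting this as the only thing left to check.

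You go further by noting that the definition of a basis also asks for a \(\V\)-small \(\ll^B\), and your reduction of \(\ll^B\) to \(\order\) on basis elements (via compactness together with \cref{way-below-properties}(ii) and (iii)) is correct. But the augmentation trick does not produce the required \(\V\)-smallness. Spelled out, your proposed \(b \ll^B b'\) truncates \emph{``there exist \(I : \V\), \(\alpha : I \to B\) and \(i : I\) such that \(\alpha\) is an approximating family for \(\beta(b')\) and \(\alpha_i = b\)''}; quantifying over \(I : \V\) already places this type in \(\V^+\), and propositional truncation does not lower universe levels, while the directedness and supremum conditions hidden inside ``approximating family'' still refer to \(\order_D\) at universe \(\T\). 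So the smallness half remains genuinely open in your argument, as you yourself flag. To be fair, the paper's prose is equally silent on this point, so you have at least identified a step the paper glosses over rather than misread the intended argument.
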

  \begin{proof}
  We just need to show that having \(\beta : B \to D\) and
  \(\alpha : I \to B\) such that every element \(\beta(b)\) is compact and
  \(x = \bigsqcup \beta \circ \alpha\), already implies that
  \(\beta\paren*{\alpha_i} \ll x\) for every \(i : I\). But if \(i : I\), then
  \(\beta\paren*{\alpha_i} \ll \beta\paren*{\alpha_i} \order \bigsqcup \beta
  \circ \alpha = x\) by compactness of \(\beta\paren*{\alpha_i}\), so
  \cref{way-below-properties}(iii) now finishes the proof.
\end{proof}
\begin{example}[\(\lifting_{\U }(X)\) is algebraic]
  \label{lifting-is-algebraic}
  Let \(X : \U \) be a set and consider
  \(\lifting_{\U }(X) : \DCPO{U}{U^+}{U^+}\). The basis
  \({[\bot , \eta_X ]} : {\paren*{\unitt_{\U } + X} \to
    \lifting_{\U }(X)}\) exhibits \(\lifting_{\U }(X)\) as an
  algebraic dpco.

  \begin{claimproof}
    By \cref{compact-elements-in-lifting}, the elements \(\bot\) and
    \(\eta_X(x)\) (with \(x : X\)) are all compact, so it remains to show that
    \(\unitt_{\U } + X\) is indeed a basis. Recalling
    \Cref{pointed-dcpos-have-subsingleton-joins,lifting-element-as-sup}, we can
    write any element \((P,\varphi) : \lifting_{\V}(X)\) as the directed join
    \(\bigsqcup \paren*{[\bot,\eta_X] \circ \alpha}\) with
    \(\alpha \colonequiv [\id,\varphi] : \paren*{\unitt_{\U } + P} \to
    \paren*{\unitt_{\U } + X}\). By \cref{algebraic-criterion} the proof is
    finished.
  \end{claimproof}
\end{example}

\begin{example}[\(\powerset_{\U }(X)\) is algebraic]
  \label{powerset-is-algebraic}
  Let \(X : \U \) be a set and consider
  \(\powerset_{\U }(X) : \DCPO{U}{U^+}{U}\). The basis
  \({\iota} : {\List(X) \to \powerset_{\U }(X)}\) that maps a finite list to a
  Kuratowski finite subset exhibits \(\powerset_{\U }(X)\) as an algebraic dpco.
  \begin{claimproof}
    By \cref{compact-elements-in-powerset}, the element \(\iota(l)\) is compact
    for every list \(l : \List(X)\), so it remains to show that \(\List(X)\) is
    indeed a basis.  In the proof of \cref{compact-elements-in-powerset}, we saw
    that every \(\U \)-subset~\(A\) of \(X\) can be expressed as the directed
    supremum \(\bigsqcup \iota \circ \alpha\) where
    \(\alpha : \List(\mathbb T(A)) \to \List(X)\) takes a list
    \([\paren*{x_0,p_0},\dots,\paren*{x_{n-1},p_{n-1}}]\) to the list
    \([x_0,\dots,x_{n-1}]\). Another application of \cref{algebraic-criterion}
    now finishes the proof.
  \end{claimproof}
\end{example}

\begin{example}[Scott's \(D_\infty\) is algebraic]
  \label{D-infty-is-algebraic}
  The pointed dcpo \(D_\infty : \DCPOnum{0}{1}{1}\) with
  \(D_\infty \cong D_\infty^{D_\infty}\) from
  \cref{Scott's-example-using-self-exponentiation} is algebraic.
    We postpone the proof until \cref{proof-of-D-infty-is-algebraic}, since we
    will need some additional results on locally small dcpos.
\end{example}

\subsection{Ideal Completion}
Finally, we consider how to build dcpos from posets, or more generally from
abstract bases, using the rounded ideal completion
\cite[Section~2.2.6]{AbramskyJung1994}.
Given our definition of the notion of dcpo, the reader might expect us to define
ideals using families rather than subsets. However, we use subsets for
extensionality reasons. Two subsets \(A\) and \(B\) of some \(X\) are equal
exactly when \({x\in A \iff x\in B}\) for every \(x : X\). However, given two
(directed) families \(\alpha : I \to X\) and \(\beta : J \to X\), it is of
course not the case (it does not even typecheck) that \(\alpha = \beta\) when
\({\Pi_{i : I}\exists_{j : J}\alpha_i = \beta_j}\) and
\(\Pi_{j : J}\exists_{i : I}\beta_j = \alpha_i\) hold. We could try to construct
the ideal completion by quotienting the families, but then it seems impossible
to define directed suprema in the ideal completion without resorting to choice.

\begin{definition}[Abstract basis]
  A pair \((B,\prec)\) with \(B : \V \) and \(\prec\) taking values in
  \(\V \) is called a \(\V \)-\emph{abstract basis} if:
  \begin{enumerate}[(i)]
  \item \(\prec\) is proposition-valued;
  \item \(\prec\) is transitive;
  \item \(\prec\) satisfies nullary interpolation, i.e.\ for every \(x : B\),
    there exists some \(y : B\) with \(y \prec x\);
  \item \(\prec\) satisfies binary interpolation, i.e.\ for every \(x,y : B\)
    with \(x \prec y\), there exists some \(z : B\) with \(x \prec z \prec y\).
  \end{enumerate}
\end{definition}
\begin{example}\label{bases-are-abstract-bases}
  Let \(D\) be a \(\V \)-dcpo with a basis \(\beta : B \to D\), By
  \Cref{way-below-properties,nullary-interpolation,binary-interpolation},
  the~pair \(\paren*{B,\ll^B}\) is an example of a \(\V \)-abstract basis.
\end{example}
\begin{example}\label{preorder-is-abstract-basis}
  Any preorder \((P,\order)\) with \(P : \V \) and \(\order\) taking
  values in \(\V \) is a \(\V \)-abstract basis, since reflexivity
  implies both interpolation properties.
\end{example}

For the remainder of this section, fix some arbitrary \(\V \)-abstract
basis \((B,\prec)\).

\begin{definition}[Directed subset]
  Let \(A\) be a \(\V \)-subset of \(B\). Then \(A\) is \emph{directed} if \(A\)
  is inhabited (i.e.\ \(\exists_{x : B}\, x \in A\) holds) and for every
  \(x,y \in A\), there exists some \(z \in A\) such that \(x,y\order z\).
\end{definition}

\begin{definition}[Ideal, lower set]
  Let \(A\) be a \(\V \)-subset of \(B\). Then \(A\) is an \emph{ideal} if \(A\)
  is a directed subset of \(B\) and \(A\) is a \emph{lower set}, i.e.\ if
  \(x \prec y\) and \(y \in A\), then \(x \in A\) as well.
\end{definition}

\begin{construction}[Rounded ideal completion \(\Idl(B,\prec)\)]
  We construct a \(\V\)-dcpo, known as the \emph{(rounded) ideal completion}
  \({\Idl(B,\prec)} : {\DCPO{V}{V^+}{V}}\) of \((B,\prec)\). The carrier is
  given by the type \( \sum_{I : B \to \Omega_{\V} }\isideal(I) \) of ideals on
  \((B,\prec)\).
  The order is given by subset inclusion~\(\subseteq\). If~we have a directed
  family \(\alpha : A \to \Idl(B,\prec)\) of ideals (with \(A : \V \)), then the
  subset given by \(\lambda x . \exists_{a : A} x \in \alpha_a\) is again an
  ideal and the supremum of \(\alpha\) in \(\Idl(B,\prec)\). \lipicsEnd
\end{construction}

\begin{lemma}[Rounded ideals]
  \label{ideals-are-rounded}
  The ideals of \(\Idl(B,\prec)\) are \emph{rounded}. That is, if
  \(I : \Idl(B,\prec)\) and \(x \in I\), then there exists some \(y \in I\) with
  \(x \prec y\).
\end{lemma}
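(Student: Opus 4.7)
The key observation is that roundedness falls out of the directedness clause in the definition of an ideal, applied to the pair $(x,x)$. The plan is as follows. Fix an ideal $I : \Idl(B,\prec)$ and an element $x \in I$. By definition, $I$ is in particular a directed subset of $B$, which means that for every pair of elements $a, b \in I$ there exists some $z \in I$ with $a \prec z$ and $b \prec z$. Instantiating this with $a = b = x$, we immediately obtain $y \in I$ with $x \prec y$, which is exactly the desired witness of roundedness.

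Formally, since the conclusion $\exists_{y : B}\,(y \in I \times x \prec y)$ is a proposition, and since directedness of $I$ yields $\exists_{z : B}\,(z \in I \times x \prec z \times x \prec z)$, we may discharge the truncation, take such a $z$, discard the redundant second copy of $x \prec z$, and conclude. No use of the lower-set condition or of the interpolation properties of $\prec$ is needed for this particular lemma.

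There is essentially no obstacle here: the proof is a one-line consequence of unpacking the definition of ``directed''. The only subtle point worth flagging is the standard univalent-foundations manoeuvre of noting that the goal is a propositional truncation, so that we are free to eliminate the truncation in the directedness hypothesis in order to produce our witness $y$.
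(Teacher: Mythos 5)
Your proof is correct and takes exactly the same route as the paper, which simply states that the lemma is immediate from the fact that ideals are directed subsets. Your write-up spells out what the paper leaves implicit — instantiating the directedness clause at the pair \((x,x)\) and noting that the goal is a proposition so the existential hypothesis may be untruncated — and correctly observes that neither the lower-set condition nor interpolation is needed.
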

  \begin{proof}
  Immediate from the fact that ideals are directed sets.
\end{proof}
\begin{definition}[Principal ideal \(\principalideal x\)]
  We write \(\principalideal(-) : B \to \Idl(B,\prec)\) for the map that takes
  \(x : B\) to the \emph{principal ideal} \(\lambda y . y \prec x\).
\end{definition}

\begin{lemma}\label{ideal-as-sup-of-principal-ideals}
  Let \(I : \Idl(B,\prec)\) be an ideal. Then \(I\) may be expressed as the
  supremum of the directed family
  \((x,p) : {\mathbb T(I)} \mapsto {\principalideal{x}} : {\Idl(B,\prec)}\),
  which we will denote by \(I = \bigsqcup_{x \in I} \principalideal{x}\).
\end{lemma}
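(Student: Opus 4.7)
The plan is to prove two things in sequence: first that the displayed family is indeed directed (so the supremum in the construction applies), and second that the resulting supremum coincides with \(I\) itself.

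For directedness of the family \((x,p) \in \mathbb{T}(I) \mapsto \principalideal{x}\), I would note that \(\mathbb{T}(I)\) is inhabited because \(I\), being an ideal, is a directed subset of \(B\) and hence inhabited. For upper bounds, given \((x_1,p_1),(x_2,p_2) : \mathbb{T}(I)\), directedness of \(I\) as a subset yields some \(z \in I\) with \(x_1 \prec z\) and \(x_2 \prec z\); then transitivity of \(\prec\) gives \(\principalideal{x_1} \subseteq \principalideal{z}\) and \(\principalideal{x_2} \subseteq \principalideal{z}\), with witness \((z,p_z) \in \mathbb{T}(I)\).

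Next, by the construction of suprema in \(\Idl(B,\prec)\), the supremum of this family is the subset
\[
S \colonequiv \lambda y.\, \exists_{(x,p) : \mathbb{T}(I)}\, y \in \principalideal{x} \;=\; \lambda y.\, \exists_{x \in I}\, y \prec x.
\]
I would then verify \(I = S\) by proving the two inclusions and invoking function and propositional extensionality. For \(I \subseteq S\): given \(y \in I\), roundedness (\cref{ideals-are-rounded}) yields some \(x \in I\) with \(y \prec x\), so \(y \in S\). For \(S \subseteq I\): if \(y \prec x\) and \(x \in I\), then \(y \in I\) because \(I\) is a lower set.

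Neither step presents a real obstacle; the only mild subtlety is simply recognizing that directedness of the family of principal ideals is exactly witnessed by directedness of the subset \(I\), and that the left-to-right inclusion for equality is the content of \cref{ideals-are-rounded} rather than anything requiring fresh argument.
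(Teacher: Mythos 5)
Your proposal is correct and follows essentially the same approach as the paper's proof: directedness of the family from directedness of \(I\) as a subset, the inclusion \(\bigsqcup_{x \in I} \principalideal{x} \subseteq I\) from \(I\) being a lower set, and the reverse inclusion from \cref{ideals-are-rounded}. You simply spell out the element-wise reasoning a bit more explicitly than the paper does.
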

  \begin{proof}
  Directedness of the family follows from the fact that \(I\) is a directed
  subset. Since \(I\) is a lower set, \(\principalideal{x} \subseteq I\) holds
  for every \(x \in I\), establishing
  \(\bigsqcup_{x \in I} \principalideal{x} \subseteq I\). The reverse inclusion
  follows from \cref{ideals-are-rounded}.
\end{proof}
We wish to prove that \(\Idl(B,\prec)\) is continuous with basis
\(\downarrow(-) : B \to \Idl(B,\prec)\). To this end, it is useful to
express \(\ll_{\Idl(B,\prec)}\) in more elementary terms.

\begin{lemma}\label{ideals-way-below-in-terms-of-containment}
  Let \(I , J : \Idl(B,\prec)\) be two ideals. Then \(I \ll J\) holds if and
  only there exists \(x \in J\) such that \(I \subseteq \principalideal{x}\).
\end{lemma}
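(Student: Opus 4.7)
The plan is to prove both implications using \cref{ideal-as-sup-of-principal-ideals}, which expresses any ideal as the directed supremum of its principal subideals. The backward direction uses only the roundedness/lower-set structure of ideals, while the forward direction is essentially an immediate application of \(I \ll J\) to the canonical directed family.

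For \((\Leftarrow)\): Suppose we have \(x \in J\) with \(I \subseteq \principalideal{x}\). Since \(I \ll J\) is a proposition, we may work with an actual such \(x\). Let \(\alpha : A \to \Idl(B,\prec)\) be a directed family with \(J \subseteq \bigsqcup \alpha\). Because \(x \in J\), we get \(x \in \bigsqcup \alpha\), i.e.\ there exists \(a : A\) with \(x \in \alpha_a\). For any \(y \in I\), the assumption \(I \subseteq \principalideal{x}\) gives \(y \prec x\), and since \(\alpha_a\) is a lower set, \(y \in \alpha_a\). Hence \(I \subseteq \alpha_a\), as required.

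For \((\Rightarrow)\): Assume \(I \ll J\). By \cref{ideal-as-sup-of-principal-ideals}, the family \((x,p) : \mathbb T(J) \mapsto \principalideal{x} : \Idl(B,\prec)\) is directed and its supremum is \(J\). In particular \(J \subseteq \bigsqcup_{x \in J} \principalideal{x}\), so \(I \ll J\) applied to this directed family gives (in truncated form) some \((x,p) : \mathbb T(J)\), i.e.\ some \(x \in J\), with \(I \subseteq \principalideal{x}\). This is exactly the right-hand side of the biconditional.

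The only real subtlety is keeping the truncation bookkeeping straight: in the backward direction we use that \(I \ll J\) is a proposition to eliminate the truncated existential hypothesis, and in the forward direction the conclusion is itself a propositionally truncated existential, matching the one produced by the definition of \(\ll\). Apart from this, both halves are formal manipulations of the definitions, so no step poses a genuine obstacle.
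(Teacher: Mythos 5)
Your proof is correct and takes essentially the same approach as the paper: the forward direction applies \(I \ll J\) to the canonical directed family of principal ideals from \cref{ideal-as-sup-of-principal-ideals}, and the backward direction uses that \(I \ll J\) is a proposition to untruncate the hypothesis and then exploits that each \(\alpha_a\) is a lower set. Your write-up just fills in a few steps the paper leaves implicit.
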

  \begin{proof}
  The left-to-right implication follows immediately from
  \cref{ideal-as-sup-of-principal-ideals}.

  For the converse, note that \(I \ll J\) is a proposition, so we may assume
  that we have \(x \in J\) with \(I \subseteq \principalideal{x}\). Now let
  \(\alpha : A \to \Idl(B,\prec)\) be a directed family such that
  \(J \subseteq \bigsqcup \alpha\). Then there must exist some \(a : A\) for
  which \(x \in \alpha_a\). But \(I \subseteq \principalideal{x}\) and
  \(\alpha_a\) is a lower set, so \(I \subseteq \alpha_a\).
\end{proof}
\begin{theorem}\label{Idl-is-continuous}
  The map \({\downarrow(-)} : {B \to \Idl(B,\prec)}\) is a basis for
  \(\Idl(B,\prec)\). Thus, \(\Idl(B,\prec)\) is a continuous \(\V \)-dcpo.
\end{theorem}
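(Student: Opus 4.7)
The plan is to verify both clauses in the definition of basis for ${\downarrow}(-) : B \to \Idl(B,\prec)$. Given an ideal $I : \Idl(B,\prec)$, I would take the approximating family $\alpha_I : \mathbb T(I) \to B$ defined by $(x,p) \mapsto x$; its index type lies in $\V$ since $B : \V$ and $I$ is a $\V$-subset. The family ${\downarrow} \circ \alpha_I$ is directed in $\Idl(B,\prec)$ by transitivity of $\prec$ together with $I$ being directed as a subset: inhabitedness of $\mathbb T(I)$ follows from inhabitedness of $I$, and for $(x_1,p_1),(x_2,p_2) \in \mathbb T(I)$ a witness $z \in I$ with $x_1,x_2 \prec z$ yields $\downarrow x_1, \downarrow x_2 \subseteq \downarrow z$ by transitivity. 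The supremum of ${\downarrow} \circ \alpha_I$ equals $I$ by \cref{ideal-as-sup-of-principal-ideals}.

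To establish $\downarrow x \ll I$ for every $(x,p) \in \mathbb T(I)$, I would invoke \cref{ideals-way-below-in-terms-of-containment}: it suffices to produce some $y \in I$ with $\downarrow x \subseteq \downarrow y$. Roundedness of $I$ (\cref{ideals-are-rounded}) supplies such a $y$ with $x \prec y$, after which $\downarrow x \subseteq \downarrow y$ is immediate from transitivity of $\prec$.

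For the smallness condition, I would define
\[
  b \ll^B b' \colonequiv \exists_{x : B}\paren*{ (x \prec b') \times \prod_{y : B}(y \prec b \to y \prec x) },
\]
which is a $\V$-proposition since $B : \V$ and $\prec$ has $\V$-small truth values. Its equivalence with $\downarrow b \ll \downarrow b'$ is once more \cref{ideals-way-below-in-terms-of-containment}, after unfolding the subset inclusion $\downarrow b \subseteq \downarrow x$ pointwise. No serious obstacle arises here; the only predicativity-sensitive point is precisely this smallness requirement, where \cref{ideals-way-below-in-terms-of-containment} is exactly what keeps the relation inside $\V$ instead of forcing it into $\V^+$.
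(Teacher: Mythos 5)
Your proof is correct and follows essentially the same route as the paper's: express $I$ as $\bigsqcup_{x \in I} \downarrow x$ via \cref{ideal-as-sup-of-principal-ideals}, establish $\downarrow x \ll I$ from \cref{ideals-way-below-in-terms-of-containment} and \cref{ideals-are-rounded}, and derive the $\V$-smallness of $\ll^B$ from \cref{ideals-way-below-in-terms-of-containment}. The paper's proof is terser and leaves the directedness and smallness checks implicit, while you spell them out; the cited lemmas and the overall structure are the same.
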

  \begin{proof}
  Let \(I : \Idl(B,\prec)\) be arbitrary. By
  \cref{ideal-as-sup-of-principal-ideals} we can express \(I\) as the supremum
  \(\bigsqcup_{x \in I} \principalideal{x}\), so it is enough to prove that
  \(\principalideal{x} \ll I\) for every \(x \in I\). But this follows from
  \Cref{ideals-way-below-in-terms-of-containment,ideals-are-rounded}.
\end{proof}
  \begin{lemma}\label{reflexive-implies-algebraic}
  If \(\prec\) is reflexive, then the compact elements of \(\Idl(B,\prec)\) are
  exactly the principal ideals and \(\Idl(B,\prec)\) is algebraic.
\end{lemma}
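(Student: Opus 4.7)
The plan is to use \cref{ideals-way-below-in-terms-of-containment} as the main tool, which reduces questions about $\ll$ in $\Idl(B,\prec)$ to simple containment statements involving principal ideals.

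First, I would verify that every principal ideal $\principalideal x$ is compact. By \cref{ideals-way-below-in-terms-of-containment}, it suffices to exhibit some $y \in \principalideal x$ with $\principalideal x \subseteq \principalideal y$. Taking $y \colonequiv x$ works: reflexivity of $\prec$ gives $x \prec x$, hence $x \in \principalideal x$, and the required inclusion is trivial. (Note that this is essentially the only place reflexivity is used.)

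Next, I would show the converse: any compact $I : \Idl(B,\prec)$ is a principal ideal. From $I \ll I$ and \cref{ideals-way-below-in-terms-of-containment}, there merely exists $x \in I$ with $I \subseteq \principalideal x$. Since ``$I$ is a principal ideal'' unfolds to the proposition $\exists_{x : B}\,I = \principalideal x$, we may proceed assuming an actual witness $x \in I$ with $I \subseteq \principalideal x$. The reverse inclusion $\principalideal x \subseteq I$ follows from $I$ being a lower set: for any $y \prec x$, the membership $x \in I$ forces $y \in I$. Function and propositional extensionality then yield $I = \principalideal x$.

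Algebraicity follows immediately by combining the two directions with \cref{Idl-is-continuous}: the latter already provides $\downarrow(-) : B \to \Idl(B,\prec)$ as a basis, and the first step shows every basis element is compact, which is exactly the definition of algebraic. The only mild subtlety to be careful with is the truncation handling in the second step, but since the goal $\exists_{x : B}\,I = \principalideal x$ is itself propositional, eliminating the propositional truncation is harmless and antisymmetry of $\prec$ is never needed.
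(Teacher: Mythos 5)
Your proof is correct and matches the paper's approach, which simply declares the result ``immediate from \cref{ideals-way-below-in-terms-of-containment}''; you have faithfully spelled out that immediacy. The observation that reflexivity is only needed to get \(x \in \principalideal{x}\), and that antisymmetry is never used, is accurate and a nice touch.
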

\begin{proof}
  Immediate from \cref{ideals-way-below-in-terms-of-containment}.
\end{proof}
\begin{theorem}\label{free-dcpo-on-a-poset}
  The ideal completion is the free dcpo on a small poset. That is, if we
  have a poset \((P,\order)\) with \(P : \V \) and \(\order\) taking
  values in \(\V \), then for every \(D : \DCPO{V}{U}{T}\) and
  monotone function \(f : P \to D\), there is a unique continuous
  function \(\overline{f} : {\Idl(P,\order) \to D}\) such that
  \[
    \begin{tikzcd}
      P \ar[dr, "\principalideal(-)"'] \ar[rr, "f"] & & D \\
      & \Idl(P,\order) \ar[ur,"\overline{f}"',dashed]
    \end{tikzcd}
  \]
  commutes.
\end{theorem}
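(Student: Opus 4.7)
The plan is to define $\overline{f}$ in the way forced by \cref{ideal-as-sup-of-principal-ideals}, namely by
\[
\overline{f}(I) \colonequiv \bigsqcup_{(x,p) : \mathbb{T}(I)} f(x),
\]
and then verify in turn: well-definedness of the supremum, commutativity of the triangle, continuity of $\overline{f}$, and uniqueness. Since $P : \V$ and $I : P \to \Omega_\V$, the index $\mathbb{T}(I)$ lives in $\V$, so a directed family $\mathbb{T}(I) \to D$ does have a supremum in the $\V$-dcpo $D$. Directedness follows from the fact that $I$ is an inhabited directed subset of $P$ together with monotonicity of $f$: if $x, y \in I$, pick $z \in I$ with $x, y \leq z$, and then $f(x), f(y) \order f(z)$.

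For the triangle $\overline{f} \circ \principalideal(-) \sim f$, note that $\overline{f}(\principalideal y) = \bigsqcup_{x \leq y} f(x)$. Since $\order$ is reflexive, $y \in \principalideal y$, so $f(y)$ appears in the family; and by monotonicity of $f$ it is also an upper bound. Hence the supremum is $f(y)$. Monotonicity of $\overline{f}$ itself follows directly: if $I \subseteq J$, then the sup-family for $I$ is a subfamily of the one for $J$.

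For continuity, let $\alpha : A \to \Idl(P,\order)$ be a directed family with $A : \V$, whose supremum is the ideal $U \colonequiv \lambda x . \exists_{a : A}\, x \in \alpha_a$. I will show $\overline{f}(U) = \bigsqcup_{a : A} \overline{f}(\alpha_a)$ by antisymmetry. For~$\overline{f}(U) \order \bigsqcup_{a} \overline{f}(\alpha_a)$, take any $(x,p) \in \mathbb{T}(U)$; since $f(x) \order \bigsqcup_{a} \overline{f}(\alpha_a)$ is a proposition, we may unpack the truncation to obtain $a : A$ with $x \in \alpha_a$, yielding $f(x) \order \overline{f}(\alpha_a) \order \bigsqcup_{a} \overline{f}(\alpha_a)$. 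Conversely, for each $a : A$ and each $x \in \alpha_a$ we have $x \in U$, so $f(x) \order \overline{f}(U)$, giving $\overline{f}(\alpha_a) \order \overline{f}(U)$ for every $a$, and hence $\bigsqcup_a \overline{f}(\alpha_a) \order \overline{f}(U)$.

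Uniqueness is immediate from \cref{ideal-as-sup-of-principal-ideals}: if $g : \Idl(P,\order) \to D$ is continuous with $g \circ \principalideal(-) = f$, then for any ideal $I$,
\[
g(I) = g\!\left(\bigsqcup_{x \in I} \principalideal{x}\right) = \bigsqcup_{x \in I} g(\principalideal{x}) = \bigsqcup_{x \in I} f(x) = \overline{f}(I).
\]
The main obstacle is the antisymmetry argument in the continuity step, where one must be careful to invoke propositional truncation only when proving a proposition (as above); everything else is a routine unfolding of the definitions.
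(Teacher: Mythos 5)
Your proof is correct and follows essentially the same approach as the paper: define $\overline{f}(I)$ as the directed supremum of $f$ over $\mathbb{T}(I)$, verify the triangle via reflexivity and monotonicity, and derive uniqueness from \cref{ideal-as-sup-of-principal-ideals}. The paper leaves continuity as ``not hard to establish''; your detailed antisymmetry argument (carefully unpacking the truncation only when proving a proposition) fills in exactly the step the paper elides, and it is correct.
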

  \begin{proof}
  Given \((P,\order)\), \(D\) and \(f\) as in the theorem, we define
  \(\overline{f}\) by mapping an ideal \(I\) to the supremum of the directed
  (since \(I\) is an ideal) family
  \(\mathbb {T}(I) \xrightarrow{\fst} P \xrightarrow{f} D \).

  Commutativity of the diagram expresses that
  \(f(x) = \bigsqcup_{y \order x} f(y)\) for every \(x : P\). By~antisymmetry of
  \(\order\), it suffices to prove \(f(x) \order \bigsqcup_{y \order x}f(y)\)
  and \(\bigsqcup_{y \order x}f(y) \order f(x)\). The first holds by reflexivity
  of \(\order\) and the second holds because \(f\) is monotone.

  Uniqueness of \(\overline{f}\) follows easily using
  \cref{ideal-as-sup-of-principal-ideals}. Finally, continuity of
  \(\overline{f}\) is not hard to establish either.
\end{proof}

\begin{definition}[Continuous retract, section, retraction]
  A \(\V \)-dcpo \(D\) is a \emph{continuous retract} of another
  \(\V \)-dcpo \(E\) if we have continuous functions
  \(s : D \to E\) (the \emph{section}) and
  \(r : E \to D\) (the \emph{retraction}) such that
  \(r(s(x)) = x\) for every \(x : D\).
\end{definition}
\begin{theorem}
  If \(E\) is a dcpo with basis \(\beta : B \to D\) and
  \(D\) is a continuous retract of \(E\) with retraction
  \(r\), then \(r \circ \beta\) is a basis for \(D\).
\end{theorem}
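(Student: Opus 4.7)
The plan is to verify the two conditions defining a basis for $r \circ \beta : B \to D$: (i) every $x : D$ has an approximating family, and (ii) the way-below relation $\ll_D$ is small on the image of $r \circ \beta$. Throughout, I~interpret $\beta$ as a basis $\beta : B \to E$ (reading the statement with this minor typo corrected), and I use that $r$ and $s$ are monotone and continuous, together with $r \circ s = \id_D$.

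The key technical observation, which I~would prove first, is: for every $u : E$ and $x : D$, if $u \ll_E s(x)$ then $r(u) \ll_D x$. Suppose $\gamma : J \to D$ is directed with $x \order_D \bigsqcup \gamma$. By continuity of $s$, the family $s \circ \gamma$ is directed in $E$ with $s(x) \order_E \bigsqcup (s \circ \gamma)$, so from $u \ll_E s(x)$ there exists $j : J$ with $u \order_E s(\gamma_j)$; applying the monotone $r$ yields $r(u) \order_D r(s(\gamma_j)) = \gamma_j$. Condition~(i) then follows by taking, for a given $x : D$, an approximating family $\alpha : I \to B$ for $s(x)$ in $E$ (which exists since $\beta$ is a basis for $E$): the composite $r \circ \beta \circ \alpha$ is directed with supremum $r(s(x)) = x$ by continuity of $r$, and each $(r \circ \beta)(\alpha_i) \ll_D x$ by the observation applied to $u \colonequiv \beta(\alpha_i)$.

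For~(ii), the characterization I~would use is: $(r \circ \beta)(b) \ll_D (r \circ \beta)(b')$ if and only if there exists $c : B$ with $\beta(c) \ll_E s((r \circ \beta)(b'))$ and $(r \circ \beta)(b) \order_D (r \circ \beta)(c)$. The forward direction applies the way-below property to the approximating family of $s((r \circ \beta)(b'))$ from part~(i), while the backward direction combines the key observation with \cref{way-below-properties}(iii). The main obstacle I~expect is making this characterization $\V$-small, since neither conjunct lies in $\V$ a~priori: the element $s((r \circ \beta)(b'))$ is not in the image of $\beta$, and $\order_D$ may take values in a larger universe. I~would resolve this by propositionally quantifying over a choice of approximating family $\alpha : I \to B$ for $s((r \circ \beta)(b'))$, which reduces the condition $\beta(c) \ll_E s((r \circ \beta)(b'))$ to an existential statement using $\order^B$ from \cref{order-small-on-basis}, and by expressing $\order_D$ on the image of $r \circ \beta$ via \cref{order-in-terms-of-way-below'} applied to the basis $\beta$ in $E$ (using that the section $s$ is fully faithful because $r \circ s = \id_D$), reducing everything to the small relations $\order^B$ and $\ll^B$ on $B$.
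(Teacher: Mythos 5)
Your argument for the approximating-family condition coincides with the paper's: pick an approximating family \(\alpha : I \to B\) for \(s(x)\) in \(E\), show \(\bigsqcup r \circ \beta \circ \alpha = x\) by continuity of \(r\), and show \(r(\beta(\alpha_i)) \ll_D x\) using the observation that \(u \ll_E s(x)\) implies \(r(u) \ll_D x\); the paper proves this observation inline rather than isolating it as a lemma, but the reasoning is identical. Where you go further is that you also attack the smallness clause in the definition of basis — that \(\ll_D\) restricted to \(r \circ \beta\) must admit a \(\V\)-valued representative \(\ll^B\) — whereas the paper's written proof of this theorem does not address that clause at all, so you have correctly spotted a proof obligation the paper leaves unverified. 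Your characterization of \((r\circ\beta)(b) \ll_D (r\circ\beta)(b')\) as \(\exists c : B\) with \(\beta(c) \ll_E s(r(\beta(b')))\) and \(r(\beta(b)) \order_D r(\beta(c))\) is correct, and reducing the two conjuncts to \(\ll^B\) and \(\order^B\) via approximating families, faithfulness of \(s\), and \cref{order-in-terms-of-way-below'} is a plausible route. You should be careful, though, about the phrase ``propositionally quantifying over a choice of approximating family'': the datum you ultimately want to produce (\(\ll^B\) together with the family of equivalences) is not obviously a proposition without univalence, so eliminating the merely-existent approximating families for \(s(r(\beta(b)))\), \(s(r(\beta(c)))\) and \(s(r(\beta(b')))\) into that goal needs either a concrete construction that bypasses the truncation or a separate argument that the goal is a proposition; as you yourself flag, this is where the real work in this theorem lies.
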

  \begin{proof}
  Let \(E\) be a dcpo with basis \(\beta : B \to D\) and suppose that we have
  continuous retraction \(r : E \to D\) with continuous section \(s : D \to
  E\). Given \(x : D\), there exists some approximating family
  \(\alpha : I \to B\) for \(s(x)\). We claim that \(\alpha\) is an
  approximating family for \(x\) as well, i.e.\
  \begin{enumerate}[(i)]
  \item \(r\paren*{\beta\paren*{\alpha_i}} \ll x\) for every \(i : I\) and
  \item \(\bigsqcup r \circ \beta \circ \alpha = x\).
  \end{enumerate}
  The second follows from continuity of \(r\), since:
  \(\bigsqcup r \circ \beta \circ \alpha = r\paren*{\bigsqcup \beta \circ
    \alpha} = r\paren*{s(x)} = x\).  For (i), suppose that \(i : I\) and that
  \(\gamma : J \to D\) is a directed family satisfying
  \(x \order \bigsqcup \gamma\). We must show that there exists \(j : J\) with
  \(r\paren*{\beta\paren*{\alpha_I}} \order \gamma_j\). By continuity of \(s\),
  we get \(s(x) \order \bigsqcup s \circ \gamma\). Hence, since
  \(\beta\paren*{\alpha_i} \ll s(x)\), there must exist \(j : J\) with
  \(\beta\paren*{\alpha_i} \order s\paren*{\gamma_j}\). Thus, by monotonicity of
  \(r\), we get the desired
  \(r\paren*{\beta\paren*{\alpha_i}} \order r\paren*{s\paren*{\gamma_j}} =
  \gamma_j\).
\end{proof}
We now turn to locally small dcpos, as they allow us to find canonical
approximating families, which is used in the proof of
\cref{continuous-retract-of-algebraic}.

\begin{lemma}\label{locally-small-iff-ddarrow-small}
  Let \(D\) be a \(\V \)-dcpo with basis
  \(\beta : B \to D\). The following are equivalent:
  \begin{enumerate}[(i)]
  \item \(D\) is locally small;
  \item \(\beta(b) \ll x\) has size \(\V \) for every \(x : D\)
    and \(b : B\).
  \end{enumerate}
\end{lemma}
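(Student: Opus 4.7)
The plan is to prove each implication separately by expressing the relevant relation in terms of the basis $B$, which is small by assumption.

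For (i) $\Rightarrow$ (ii), assume $D$ is locally small, so we have a small-valued order $\order_\locsmall$. I would establish, for each $b : B$ and $x : D$, the equivalence
\[
(\beta(b) \ll x) \;\simeq\; \exists_{b' : B}\,\paren*{(b \ll^B b') \times (\beta(b') \order_\locsmall x)}.
\]
The forward direction uses unary interpolation (\cref{unary-interpolation}): from $\beta(b) \ll x$ we obtain some $b' : B$ with $\beta(b) \ll \beta(b') \ll x$, which gives $b \ll^B b'$ by the definition of a basis and $\beta(b') \order x$ by \cref{way-below-properties}(ii). The backward direction is immediate from \cref{way-below-properties}(iii): $\beta(b) \ll \beta(b') \order x$ implies $\beta(b) \ll x$. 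Since $b \ll^B b'$ is in $\V$ by the basis hypothesis, $\beta(b') \order_\locsmall x$ is in $\V$ by local smallness, and $B : \V$, the right-hand side is a proposition in $\V$, witnessing that $\beta(b) \ll x$ has size $\V$.

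For (ii) $\Rightarrow$ (i), I would apply \cref{order-in-terms-of-way-below'} to rewrite the order as
\[
(x \order y) \;\simeq\; \prod_{b : B}\paren*{\beta(b) \ll x \to \beta(b) \ll y}.
\]
Under the assumption that each $\beta(b) \ll x$ and $\beta(b) \ll y$ has size $\V$, and using that $B : \V$ together with closure of $\V$ under $\Pi$ and propositional equivalence, the right-hand side is a proposition of size~$\V$. This exhibits $\order$ as having a small-valued copy, so $D$ is locally small.

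The subtle point is the forward direction of (i) $\Rightarrow$ (ii): the obvious reformulation in terms of an approximating family $\alpha : I \to B$ for $x$ would be non-canonical, since the basis only provides such $\alpha$ up to propositional truncation. Interpolation bypasses this by letting the existential range over the basis $B$ itself rather than over a chosen approximating family, yielding a genuinely small witness without invoking choice.
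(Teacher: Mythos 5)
Your proof is correct and follows essentially the same route as the paper: (ii)~$\Rightarrow$~(i) via \cref{order-in-terms-of-way-below'}, and (i)~$\Rightarrow$~(ii) via the equivalence $(\beta(b) \ll x) \simeq \exists_{b' : B}\,\paren*{b \ll^B b' \times \beta(b') \order_{\locsmall} x}$ using unary interpolation and \cref{way-below-properties}(iii). Your closing remark about why the existential must range over $B$ itself rather than a chosen approximating family is a good observation and correctly identifies the point the paper's construction is designed to handle.
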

\begin{proof}
  Recalling \cref{order-in-terms-of-way-below'}, the type \(x \order y\) is
  equivalent to \(\prod_{b : B} \paren*{\beta(b) \ll x \to \beta(b) \ll y}\) for
  every \(x,y : D\). Thus, (ii) implies (i). Conversely, assume that \(D\) is
  locally small and let \(x : D\) and \(b : B\). We claim that
  \(\beta(b) \ll x\) is equivalent to
  \(\exists_{b' : B} \paren*{b \ll^B b' \times \beta(b') \order_{\locsmall} x} :
  \V \). The left-to-right implication is given by \cref{unary-interpolation},
  and the converse by \cref{way-below-properties}(iii).
\end{proof}

\begin{lemma}\label{canonical-approximating-family}
  Let \(D\) be a \(\mathcal V\)-dcpo with basis \(\beta : B \to D\). If \(D\) is
  locally small, then an element \(x : D\) is the supremum of the large directed
  family
  \(\paren*{\sum_{b : B} \beta(b) \ll x} \xrightarrow{\fst} B
  \xrightarrow{\beta} D\).  Moreover, if \(D\) is locally small, then this
  directed family is small.
\end{lemma}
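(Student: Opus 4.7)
The plan is to verify three separate facts: (a) the family $\gamma : \paren*{\sum_{b:B}\beta(b)\ll x}\to D$ sending $(b,p)\mapsto\beta(b)$ is directed; (b) $x$ is its supremum in $D$; and (c) under local smallness, the indexing type $\sum_{b:B}\beta(b)\ll x$ is equivalent to a type in $\V$.

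For (a), the index type is inhabited by nullary interpolation (\cref{nullary-interpolation}), which produces some $b:B$ with $\beta(b)\ll x$. For upward-directedness, given $(b_1,p_1),(b_2,p_2)$ in the index type, binary interpolation (\cref{binary-interpolation}) yields a single $b:B$ with $\beta(b_1),\beta(b_2)\ll\beta(b)\ll x$; this provides the required common upper bound inside the family, together with the witness $\beta(b)\ll x$ needed to land in the index type.

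For (b), each value $\gamma(b,p)=\beta(b)$ satisfies $\beta(b)\order x$ by \cref{way-below-properties}(ii), so $x$ is an upper bound. To show it is the least upper bound, let $y:D$ be any other upper bound of the family, and invoke the basis to obtain an approximating family $\alpha:I\to B$ for $x$: so $\beta\circ\alpha$ is directed, $\bigsqcup\beta\circ\alpha=x$, and $\beta(\alpha_i)\ll x$ for every $i:I$. Each pair $(\alpha_i,p_i)$ then lies in the index type, whence $\beta(\alpha_i)\order y$ by hypothesis on $y$, and taking suprema gives $x=\bigsqcup\beta\circ\alpha\order y$.

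For (c), \cref{locally-small-iff-ddarrow-small} tells us that local smallness implies $\beta(b)\ll x$ has size $\V$ for every $b:B$ and $x:D$. Replacing each $\beta(b)\ll x$ by its $\V$-small copy and forming the $\Sigma$-type produces a $\V$-type equivalent to the original indexing type.

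No substantial obstacle arises; the proof is a routine assembly of the interpolation lemmas, the basic implication $\ll\Rightarrow\order$, and the size-equivalence result. The one subtlety worth flagging is in step (b): verifying that $x$ is the \emph{least} upper bound cannot be done from inside the index type alone, but requires appealing to the basis's own approximating family $\alpha : I \to B$ for $x$, whose images $\beta(\alpha_i)$ sit in the index type by construction and thereby witness $x\order y$.
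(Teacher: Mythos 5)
Your proof is correct and follows essentially the same approach as the paper: directedness via the nullary and binary interpolation lemmas, the upper-bound direction via \cref{way-below-properties}(ii), the least-upper-bound direction by injecting an approximating family into the index type, and smallness via \cref{locally-small-iff-ddarrow-small}. The only difference is organizational — the paper first uses local smallness to produce a small family $\alpha : I \to D$ with a well-defined supremum $\bigsqcup\alpha$ in the $\V$-dcpo $D$ and then shows $x = \bigsqcup\alpha$, whereas you establish that $x$ satisfies the universal property of the supremum directly and treat smallness as a separate final step; your decomposition makes it slightly more visible that the supremum claim relies only on the basis, while local smallness is needed only to render the indexing type small.
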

\begin{proof}
  The family \(\fst \circ \beta\) is directed by the nullary
  (\cref{nullary-interpolation}) and binary (\cref{binary-interpolation})
  interpolation properties. Now suppose that \(D\) is locally small. By
  \cref{locally-small-iff-ddarrow-small}, we have \(I : \V\) and
  \(\alpha : I \to D\) directed such that \(\bigsqcup \alpha\) is the supremum
  of
  \(\paren*{\sum_{b : B} \beta(b) \ll x} \xrightarrow{\fst} B
  \xrightarrow{\beta} D\). Since \(\beta : {B \to D}\) is a basis of \(D\), we
  see that \(x \order \bigsqcup \alpha\). For the reverse inequality, it
  suffices to show that \(\beta(b) \order x\) for every \(b : B\) with
  \(\beta(b) \ll x\). But this follows from \cref{way-below-properties}(ii).
\end{proof}

\begin{theorem}\label{continuous-retract-of-algebraic}
  Let \(D\) be a \(\V \)-dcpo with basis
  \(\beta : B \to D\) and suppose that \(D\) is locally
  small. Then \(D\) is a continuous retract of the algebraic
  \(\V \)-dcpo \(\Idl\paren*{B,\order^B}\) (recall
  \cref{order-small-on-basis}).
\end{theorem}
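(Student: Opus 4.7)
The plan is to construct continuous maps $s : D \to \Idl(B,\order^B)$ (the section) and $r : \Idl(B,\order^B) \to D$ (the retraction) with $r \circ s = \id_D$, after first checking that the target $\Idl(B,\order^B)$ is indeed an algebraic $\V$-dcpo. By \cref{order-small-on-basis}, the relation $\order^B$ takes values in $\V$, and since it inherits reflexivity and transitivity from $\order_D$, the pair $(B,\order^B)$ is a $\V$-preorder, hence a $\V$-abstract basis by \cref{preorder-is-abstract-basis}; algebraicity of $\Idl(B,\order^B)$ then follows from \cref{reflexive-implies-algebraic}.

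For the retraction I take $r(I) \colonequiv \bigsqcup_{b \in I} \beta(b)$, well-defined because $I$ is a directed subset and $\beta$ is monotone with respect to $\order^B$. For the section I take $s(x) \colonequiv \lambda b.\,(\beta(b) \ll x)$. The crucial point here is the size constraint: for $s(x)$ to be an element of $\Idl(B,\order^B)$ it must be a $\V$-subset of $B$, and this is precisely where local smallness is used, through \cref{locally-small-iff-ddarrow-small}, which guarantees that $\beta(b) \ll x$ has size $\V$. That $s(x)$ is an ideal then follows from \Cref{nullary-interpolation,binary-interpolation} for inhabitedness and directedness, and from \cref{way-below-properties}(iii) for the lower-set property.

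To verify continuity of $s$ on a directed family $\alpha : I \to D$, I show $s(\bigsqcup\alpha) = \bigsqcup_i s(\alpha_i)$ as subsets. The forward inclusion uses \cref{unary-interpolation}: given $\beta(b) \ll \bigsqcup\alpha$, interpolate to $b'$ with $\beta(b) \ll \beta(b') \ll \bigsqcup\alpha$, extract $i$ with $\beta(b') \order \alpha_i$ from the definition of $\ll$, and conclude $\beta(b) \ll \alpha_i$ by \cref{way-below-properties}(iii). The reverse inclusion is immediate from $\alpha_i \order \bigsqcup\alpha$. Continuity of $r$ is a similar calculation using the explicit form of directed suprema in $\Idl(B,\order^B)$. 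Finally, $r(s(x)) = \bigsqcup_{\beta(b) \ll x} \beta(b) = x$ is exactly the content of \cref{canonical-approximating-family}. The main obstacle is not any combinatorial step but the size bookkeeping: without local smallness, $s(x)$ is not a $\V$-subset and hence does not lie in $\Idl(B,\order^B)$, and the canonical approximating family needed for $r \circ s = \id_D$ is only small under local smallness.
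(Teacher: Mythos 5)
Your proposal takes essentially the same route as the paper: the section is $\lambda x.\,\lambda b.\,(\beta(b) \ll x)$, the retraction is $I \mapsto \bigsqcup_{b \in I}\beta(b)$, local smallness makes the section well-typed via \cref{locally-small-iff-ddarrow-small}, continuity of the section is shown via \cref{unary-interpolation} together with \cref{way-below-properties}(iii), and $r \circ s = \id$ is exactly \cref{canonical-approximating-family}. The only minor difference is that the paper obtains $r$ and its continuity directly from the universal property \cref{free-dcpo-on-a-poset}, whereas you define $r$ explicitly and defer the continuity check.
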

\begin{proof}
  By \cref{reflexive-implies-algebraic}, \(\Idl\paren*{B,\order^B}\) is indeed
  algebraic. Let \(D\) be a \(\V \)-dcpo satisfying the hypotheses of the
  lemma. Let \({\ll_{\locsmall}} : {B \to D \to \V }\) be such that
  \(\paren*{b \ll_{\locsmall} x} \simeq \paren*{\beta(b) \ll x}\) for every
  \(x : D\) and \(b : B\).

  For every \(x : D\), we can consider the subset \(\ddarrow{x}\) given by
  \(\lambda (b : B) . b \ll_{\locsmall} x\). We show that it is an ideal.
  By \cref{canonical-approximating-family} it is a directed subset.
  And if \(b \in \ddarrow{x}\) and \(b' \order^B b\), then
  \(b' \in \ddarrow{x}\) as well by virtue of
  \cref{way-below-properties}(iii). So \(\ddarrow{x}\) is a lower set, and
  indeed an ideal.

  We claim that the map \(\ddarrow(-)\) is continuous. By
  \cref{order-in-terms-of-way-below}, it is monotone. Thus, we are left to show
  that if \(\alpha : I \to D\) is directed, then
  \(\ddarrow(\bigsqcup \alpha) \subseteq \bigsqcup_{i : I}
  \ddarrow{\alpha_i}\). Let \(b \in \ddarrow(\bigsqcup \alpha)\), i.e.\
  \(b \in B\) such that \(b \ll_\locsmall \bigsqcup \alpha\). By
  \cref{unary-interpolation}, there exists \(b' : B\) with
  \(b \ll^B b' \ll_\locsmall \bigsqcup \alpha\). Hence, there must exist
  \(i : I\) such that \(\beta(b) \ll \beta(b') \order \alpha_i\), thus,
  \(b \in \ddarrow{\alpha_i}\) and \(\ddarrow(-)\) is indeed continuous.

  Next, define \(r : \Idl\paren*{B,\order^B} \to D\) using
  \cref{free-dcpo-on-a-poset} as the unique continuous function such that
  \[
    \begin{tikzcd}
      B \ar[dr, "\principalideal(-)"'] \ar[rr, "\beta"] & & D \\
      & \Idl\paren*{B,\order^B} \ar[ur,"r"']
    \end{tikzcd}
  \]
  commutes, i.e.\ \(r\) maps an ideal \(I\) to the directed supremum
  \(\bigsqcup_{b \in I} \beta(b)\) in \(D\).

  Finally, we show that \(\ddarrow(-)\) is a section of \(r\). That is, the
  equality \(\bigsqcup_{b \ll_\locsmall x} \beta(b) = x\) holds for every
  \(x : D\). But this is exactly \cref{canonical-approximating-family}.
\end{proof}
One may wonder how restrictive the condition that \(D\) is locally
small is. We note that if \(X\) is a set, then \(\lifting_{\V }(X)\)
(by \cref{lifting-order-alt}) and \(\powerset_{\V }(X)\) are examples
of locally small \(\V \)-dcpos. A~natural question is what happens
with exponentials. In general, \(E^{D}\) may fail to be locally small
even when both \(D\) and \(E\) are. However, we do have the following
result.

\begin{lemma}\label{exponential-locally-small-criterion}
  Let \(D\) and \(E\) be \(\V \)-dcpos. Suppose that
  \(D\) is continuous and \(E\) is locally small. Then
  \(E^{D}\) is locally small.
\end{lemma}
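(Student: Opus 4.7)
The plan is to use the basis $\beta : B \to D$ to turn the universally-quantified pointwise order on $E^D$ into a quantification indexed only by $B$, which already lies in $\V$. Writing ${\order_{\locsmall}^E} : E \to E \to \V$ for the small copy of the order supplied by local smallness of $E$, I~would define the candidate small order on $E^D$ by
\[
  f \order_{\locsmall} g \;\colonequiv\; \prod_{b : B} f\paren*{\beta(b)} \order_{\locsmall}^E g\paren*{\beta(b)}.
\]
Since $B : \V$ and $\order_{\locsmall}^E$ takes values in $\V$, this type is in $\V$. The remaining task is to prove $\paren*{f \order_{\locsmall} g} \simeq \paren*{f \order_{E^D} g}$; as both sides are propositions, it suffices to give a logical equivalence.

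The forward direction will be immediate: the pointwise order on $E^D$ specialised to the elements $\beta(b)$ gives, after transporting across the equivalence with $\order_{\locsmall}^E$, exactly the definition of $f \order_{\locsmall} g$. For the converse, assume $f\paren*{\beta(b)} \order_E g\paren*{\beta(b)}$ for every $b : B$ and fix an arbitrary $x : D$; I want to conclude $f(x) \order_E g(x)$. The crucial point is that this goal is a proposition, so the existential obtained from the fact that $\beta$ is a basis may be eliminated: pick an approximating family $\alpha : I \to B$ with $I : \V$ such that $\beta \circ \alpha$ is directed and $\bigsqcup \beta \circ \alpha = x$. Continuity of $f$ and $g$ then yields $f(x) = \bigsqcup_{i : I} f\paren*{\beta(\alpha_i)}$ and $g(x) = \bigsqcup_{i : I} g\paren*{\beta(\alpha_i)}$, and for each $i$ the hypothesis combined with monotonicity of $g$ (\cref{continuous-implies-monotone}) gives $f\paren*{\beta(\alpha_i)} \order_E g\paren*{\beta(\alpha_i)} \order_E g(x)$. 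Hence $g(x)$ is an upper bound of the directed family $i \mapsto f\paren*{\beta(\alpha_i)}$, whence $f(x) \order_E g(x)$ as desired.

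The only real subtlety is in the converse direction, where one has to be disciplined about the propositional truncation: the approximating family is only given up to $\squash{-}$, and it is essential that the goal $f(x) \order_E g(x)$ is a proposition in order to use it. Everything else is routine: continuity of $D$ enters only through existence of approximating families, continuity of $f$ and $g$ is used to propagate the basis-level comparison to arbitrary elements, and local smallness of $E$ is what makes the resulting universal quantification fit into the universe $\V$.
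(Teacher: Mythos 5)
Your proposal is correct and follows essentially the same route as the paper's proof: fix a basis $\beta : B \to D$, define the candidate small order on $E^D$ by pointwise comparison at basis elements, observe that this lands in $\V$, show the forward implication is immediate, and derive the converse by expressing $x$ as the supremum of an approximating family $\beta \circ \alpha$ and using continuity of $f$ and $g$ together with the pointwise hypothesis. Your phrasing of the last step ("$g(x)$ is an upper bound of the directed family $i \mapsto f(\beta(\alpha_i))$") is a cosmetic variant of the paper's direct chain of (in)equalities. One small point you glide over: the definition of continuous dcpo only guarantees that a basis \emph{exists} (under propositional truncation), so before fixing $\beta$ one should note that the goal "$E^D$ is locally small" is itself a proposition (this is where univalence enters, as in the remark after the definition of \emph{has size}); the paper flags this explicitly, whereas you only discuss the analogous truncation issue for the approximating family.
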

  \begin{proof}
  Since being locally small is a proposition, we may assume that we are given a
  basis \(\beta : B \to D\) of \(D\). We claim that for every
  two continuous functions \(f,g : D \to E\) we have an
  equivalence
  \[
    \paren*{\prod_{x : D}f(x) \order_{E} g(x)} \simeq \paren*{\prod_{b :
        B}f(\beta(b)) \order_{\locsmall} g(\beta(b))}.
  \]
  Since \(B : \V \) and \(\order_{\locsmall}\) takes values in \(\V \), the
  second type is also in \(\V \). For the equivalence, note that the
  left-to-right implication is trivial. For the converse, assume the right-hand
  side and let \(x : D\). By continuity of \(D\), there exists some
  approximating family \(\alpha : I \to B\) for \(x\). We use it as follows:
  \begin{align*}
    f(x) &= f\paren*{\bigsqcup \beta \circ \alpha} \\
         &= \bigsqcup_{i : I} f\paren*{\beta\paren*{\alpha_i}}
         &&\text{(by continuity of \(f\))} \\
         &\order \bigsqcup_{i : I} g\paren*{\beta\paren*{\alpha_i}}
         &&\text{(by assumption)} \\
         &= g\paren*{\bigsqcup \beta \circ \alpha}
         &&\text{(by continuity of \(g\))} \\
         &= g(x),
  \end{align*}
  which finishes the proof.
\end{proof}
Moreover, the (co)limit of locally small dcpos is locally small.
\begin{lemma}\label{D-infty-locally-small-criterion}
  Given a system \((D_i,\varepsilon_{i,j},\pi_{i,j})\) as in
  \cref{limits-and-colimits}, if every \(D_i\) is locally small, then so is
  \(D_\infty\).
\end{lemma}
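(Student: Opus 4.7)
The plan is to construct a small binary relation on $D_\infty$ that is pointwise equivalent to the given order, by lifting the small orders on each $D_i$. For each $i : I$, local smallness of $D_i$ supplies a relation $\sqsubseteq_{\locsmall,i} : D_i \to D_i \to \V$ together with a fibrewise equivalence $(x \sqsubseteq_{\locsmall,i} y) \simeq (x \sqsubseteq_{D_i} y)$ for all $x,y : D_i$. On elements $\sigma,\tau : D_\infty$ define
\[
\sigma \sqsubseteq_{\locsmall} \tau \colonequiv \prod_{i : I}\, \sigma_i \sqsubseteq_{\locsmall,i} \tau_i.
\]
Because $I : \V$ and each $\sqsubseteq_{\locsmall,i}$ takes values in $\V$, this dependent product again lives in $\V$ (using that $\V \sqcup \V \equiv \V$). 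This is our candidate for the small order.

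Next I would verify that $\sqsubseteq_{\locsmall}$ is equivalent to $\sqsubseteq_{D_\infty}$. By \cref{D-infty}, the order on $D_\infty$ is defined pointwise as $\sigma \sqsubseteq_{D_\infty} \tau \colonequiv \prod_{i : I} \sigma_i \sqsubseteq_{D_i} \tau_i$. Pointwise the two orders agree up to equivalence by hypothesis, and dependent products preserve fibrewise equivalences, so we obtain the required equivalence
\[
\Bigl(\prod_{i : I}\, \sigma_i \sqsubseteq_{\locsmall,i} \tau_i\Bigr) \simeq \Bigl(\prod_{i : I}\, \sigma_i \sqsubseteq_{D_i} \tau_i\Bigr).
\]
This witnesses that $D_\infty$ is locally small in the sense of the definition in \cref{basic:domain:theory}.

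There is essentially no obstacle here beyond careful universe bookkeeping: the only thing to confirm is that $\prod_{i : I}$ of a $\V$-valued family remains in $\V$, which uses $I : \V$ together with the absorption law $\V \sqcup \V \equiv \V$ postulated in \cref{foundations}. One minor point worth noting is that the hypothesis ``every $D_i$ is locally small'' must be read as providing, for each $i$, the locally small structure $(\sqsubseteq_{\locsmall,i}, e_i)$; the construction above uses this data uniformly in $i$ to produce the desired structure on $D_\infty$.
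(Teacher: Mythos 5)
Your proof is correct, and it is the natural, direct argument. The paper actually states \cref{D-infty-locally-small-criterion} without giving a proof, so there is nothing to compare against; but the approach you take — define the candidate small order on $D_\infty$ as the pointwise $\Pi$ over $I : \V$ of the given small orders $\sqsubseteq_{\locsmall,i}$, note the universe level works out because $I : \V$ and each fibre is in $\V$, and conclude via the fact that $\Pi$ preserves fibrewise equivalences — is exactly what is needed and matches how the order on $D_\infty$ is defined in \cref{D-infty}. Your side remark that ``locally small'' must be read as data (a $\Sigma$-type) so that one can use the small orders uniformly in $i$ is also correct; this is how the definition is phrased and how it is used elsewhere (for instance in \cref{exponential-locally-small-criterion}).
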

Finally, the requirement that \(D\) is locally small is necessary, in the
following sense.
\begin{lemma}
  Let \(D\) be a \(\V \)-dcpo with basis \(\beta : B \to D\). Suppose that \(D\)
  is a continuous retract of \(\Idl\paren*{B,\order^B}\). Then \(D\) is locally
  small.
\end{lemma}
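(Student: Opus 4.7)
The plan is to exploit the fact that the ideal completion $\Idl(B, \order^B)$ has its order in the small universe $\V$, and transfer this smallness to $D$ via the retraction.

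First I would observe that, by its construction, we have $\Idl(B, \order^B) : \DCPO{V}{V^+}{V}$, so that for any two ideals $I, J$ the subset inclusion $I \subseteq J$ is the type $\prod_{x : B}(x \in I \to x \in J)$, which lives in $\V$ since $B : \V$ and membership is $\V$-valued.

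Next, let $s : D \to \Idl(B,\order^B)$ be the continuous section and $r : \Idl(B,\order^B) \to D$ the continuous retraction with $r \circ s = \id_D$. Using \cref{continuous-implies-monotone}, both $s$ and $r$ are monotone. I would then show that for any $x,y : D$,
\[
  x \order_D y \iff s(x) \subseteq s(y).
\]
The left-to-right direction is just monotonicity of $s$. For the converse, applying monotonicity of $r$ to $s(x) \subseteq s(y)$ yields $r(s(x)) \order_D r(s(y))$, which by the retract equation is $x \order_D y$.

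Finally, I would define ${\order_\locsmall} : D \to D \to \V$ by $x \order_\locsmall y \colonequiv s(x) \subseteq s(y)$, which is in $\V$ as noted above. The logical equivalence above, together with the fact that both $x \order_D y$ and $s(x) \subseteq s(y)$ are propositions, yields the required equivalence $(x \order_\locsmall y) \simeq (x \order_D y)$, exhibiting $D$ as locally small. There is essentially no obstacle here; the whole argument is a direct use of the retract together with the known universe levels of $\Idl(B,\order^B)$.
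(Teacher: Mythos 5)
Your proof is correct and takes essentially the same route as the paper: both arguments transfer local smallness from \(\Idl(B,\order^B)\) to \(D\) via the equivalence \(x \order_D y \iff s(x) \subseteq s(y)\), established using monotonicity of the continuous section and retraction. You have merely spelled out the details more explicitly than the paper's terse version.
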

\begin{proof}
  Let \(s : D \to \Idl\paren*{B,\order^B}\) be a section of a map
  \(r : \Idl\paren*{B,\order^B} \to D\), with both maps
  continuous. Then \(x \order_{D} y\) holds if and only if
  \(s(x) \order_{\Idl\paren*{B,\order^B}} s(y)\). Since
  \(\Idl\paren*{B,\order^B}\) is locally small, so must \(D\).
\end{proof}
We have now developed the theory sufficiently to give a proof of
\cref{D-infty-is-algebraic}.
\begin{claimproof}[Proof of \cref{D-infty-is-algebraic}
  (Scott's \(D_\infty\) is algebraic)]
  \label{proof-of-D-infty-is-algebraic}
    Firstly, notice that \(D_0\) is not just a \(\U_0\)-dcpo, but in fact a
    \(\U_0\)-sup lattice, i.e.\ it has joins for all families indexed by types
    in \(\U_0\). Moreover, since joins in exponentials are given pointwise,
    every \(D_n\) is in fact a \(\U_0\)-sup lattice. In~particular, every
    \(D_n\) has all finite joins. Hence, if we have \(\alpha : I \to D_n\) with
    \(I : \U_0\), then we can consider the directed family
    \(\overline{\alpha} : \overline{I} \to D_n\) with
    \(\overline{I} \colonequiv \sum_{k : \natt}\paren*{\Fin k \to D_n}\) and
    \(\overline{\alpha}\) mapping a pair \((k,f)\) to the finite join
    \(\bigvee_{0 \leq i < k} f(i)\). Moreover, if every \(\alpha_i\) is compact,
    then so is every \(\overline{\alpha}_{\overline{i}}\), since finite joins of
    compact elements are compact again. We show this explicitly for binary joins
    from which the general case follows by induction. If \(a,b : D_n\) are
    compact and \({a,b} \order {a \vee b} \order \bigsqcup \gamma\) for some
    directed family \(\gamma : {J \to D_n}\), then by compactness of \(a\) and
    \(b\), there exist \({j_a,j_b} : J\) such that \(a \order \gamma_{j_a}\) and
    \(b \order \gamma_{j_b}\). By directedness of \(\gamma\), there exists
    \(k : J\) with \({a,b} \order \gamma_k\). Hence,
    \(a \lor b \order \gamma_k\), as desired.
  \begin{claim*}
    Every \(D_n\) is locally small and has a basis \(\beta_n : B_n \to D_n\) of
    compact elements.
  \end{claim*}
  \begin{claimproof}
    We prove this by induction. For \(n = 0\), this follows from
    \Cref{lifting-order-alt,lifting-is-algebraic}. Now suppose that \(B_m\) is
    locally small and has a basis \(\beta_m : {B_m \to D_m}\). By
    \cref{exponential-locally-small-criterion}, the dcpo
    \(B_{m+1} \equiv B_m^{B_m}\) is locally small. If we have \({a,b} : B_m\),
    then we define the continuous \emph{step~function}
    \(\paren*{a \Rightarrow b} : {D_m \to D_m}\) by
    \(x \mapsto \bigvee_{{\beta_m\paren*{a}} \order {x}} \beta_m(b)\), which is
    well-defined since \(D_m\) is locally small. We are going to show that
    \(a \Rightarrow b\) is compact for every \(a,b : B_m\) and that every
    \(f : D_{m+1}\) is the join of certain step functions.
    To this end, we first observe that
    \begin{equation*}
      \tag{\(\dagger\)}
      \label{step-function-below}
      \paren*{{a \Rightarrow b} \order f} \iff
      \paren*{\beta_m(b) \order f\paren*{\beta_m(a)}},
    \end{equation*}
    which follows from the fact that continuous functions are monotone.

    For compactness, suppose that
    \(a \Rightarrow b \order \bigsqcup_{i : I} f_i\). By
    \eqref{step-function-below} we have
    \(\beta_m(b) \order \bigvee_{i :I} \paren*{f_i\paren*{\beta_m(a)}}\). By
    compactness of \(\beta_m(b)\), there exists \(i : I\) such that
    \(\beta_m(b) \order f_i\paren*{\beta_m(a)}\) already. Using
    \eqref{step-function-below} once more, we get the desired
    \(a \Rightarrow b \order f_i\).

    Now let \(f : D_m \to D_m\) be continuous. We claim that \(f\) is the join
    of the step-functions below it, i.e.\
    \( f = \bigvee_{{{a , b} : B_m , {a \Rightarrow b} \order f}} a \Rightarrow
    b \), which is well-defined, since \(D_{m+1}\) is locally small. One
    inequality clearly holds as we are only considering step-functions below
    \(f\). For the reverse inequality, let \(x : D_m\) be arbitrary.
    By
    \cref{canonical-approximating-family}, we have:
    \begin{equation*}
      \tag{\(\ddagger\)}
      \label{approximating-families}
      x = \bigsqcup_{\substack{a' : B_m \\ \beta_m(a') \ll x}}\beta_m(a') \quad
      \text{and}\quad
      f(x) = \bigsqcup_{\substack{b' : B_m \\ \beta_m(b') \ll f(x)}} \beta_m(b').
    \end{equation*}
    Hence, it suffices to show that
    \(\beta_m(b') \order \bigvee_{a,b : B_m , {a \Rightarrow b} \order f}
    \paren*{a \Rightarrow b}(x)\) whenever \(b' : B_m\) is such that
    \(\beta_m(b') \ll f(x)\). By \eqref{step-function-below} and the definition
    of a step-function it is enough to find \(a' : B_m\) such that
    \(\beta_m(b') \order f\paren*{\beta_m(a')}\) and \(\beta_m(a') \order
    x\). Using \eqref{approximating-families}, our assumption
    \(\beta_m(b') \ll f(x)\) and continuity of \(f\), we get that there exists
    \(a' : B_m\) with \(\beta_m(a') \ll x\) (and thus \(\beta_m(a') \order x\))
    and \(b_m(b') \order f\paren*{\beta_m(a')}\), as desired.

    Thus, by the paragraph preceding the claim, \(D_{m+1}\) has a basis of
    compact elements:
    \(\beta_{m+1} : {\paren*{\sum_{k : \natt}\paren*{\Fin(k) \to \paren*{D_m \times
          D_m}}} \to D_{m+1}}\) with
    \(\beta_{m+1}\paren*{k,\lambda i . \paren*{a_i,b_i}} \colonequiv \bigvee_{0
      \leq i < k} a_i \Rightarrow b_i\), finishing the proof of the claim.
  \end{claimproof}

  Finally, we show that a basis of compact elements for \(D_\infty\) is
  \(\beta_\infty : {\paren*{B_\infty \colonequiv \sum_{n : \natt} B_n} \to
    D_\infty}\) where
  \(\beta_\infty (n , b) \colonequiv
  \varepsilon_{n,\infty}\paren*{\beta_n(b)}\). We first check compactness by
  showing that if \(x : D_n\) is compact, then so is
  \(\varepsilon_{n,\infty}(x)\). This follows easily from the fact that
  \(\paren*{\varepsilon_{n,\infty},\pi_{n,\infty}}\) is an
  embedding-projection. For if \(\alpha : I \to D_\infty\) is directed and
  \(\varepsilon_{n,\infty}(x) \order \bigsqcup \alpha\), then
  \( {x = \pi_{n,\infty}\paren*{\varepsilon_{n,\infty}\paren*{x}} \order
  \pi_{n,\infty}\paren*{\bigsqcup \alpha} = \bigsqcup \pi_{n,\infty} \circ
  \alpha}, \) by~continuity of~\(\pi_{n,\infty}\). Thus, by compactness of \(x\),
  there exist \(i : I\) such that \(x \order \pi_{n,\infty}\paren*{\alpha_i}\)
  already. Hence,
  \(\varepsilon_{n,\infty}(x) \order
  \varepsilon_{n,\infty}\paren*{\pi_{n,\infty}\paren*{\alpha_i}} \order
  \alpha_i\), so \(\varepsilon_{n,\infty}(x)\) is indeed compact. Now let
  \(\sigma : D_\infty\) be arbitrary. As mentioned in the proof of
  \cref{colimit}, we have
  \(\sigma = \bigsqcup_{n : \natt} \varepsilon_{n,\infty}(\sigma_n)\). By
  \cref{canonical-approximating-family} and the claim, we can express every
  \(\sigma_n : D_n\) as
  \(\bigsqcup_{b : B_n , \beta_n(b) \ll \sigma_n} \beta_n(b)\).  Hence,
  \[
    \sigma = \bigsqcup_{n : \natt} \varepsilon_{n,\infty}
    \paren*{\bigsqcup_{\substack{b : B_n \\ \beta_n(b) \ll \sigma_n}} \beta_n(b)}
    = \bigsqcup_{n : \natt}
    \bigsqcup_{\substack{b : B_n \\ \beta_n(b) \ll \sigma_n}}
    \varepsilon_{n,\infty}\paren*{\beta_n(b)}
  \]
  by continuity of \(\varepsilon_{n,\infty}\). Thus, \(\sigma\) may be expressed
  as the supremum of the directed family
  \(\paren*{\sum_{n : \natt}\sum_{b : B_n}\beta_n(b) \ll \sigma_n}
  \to B_\infty \xrightarrow{\beta_\infty} D_\infty\).  (And in
  light of \cref{locally-small-iff-ddarrow-small} and the claim, the type
  \(\sum_{n : \natt}\sum_{b : B_n}\beta_n(b) \ll \sigma_n\) can be replaced by a
  type in \(\U_0\).) Finally, using \cref{algebraic-criterion}, we see that
  \(D_\infty\) is indeed algebraic.
\end{claimproof}
We end this section by describing an example of a continuous dcpo, built using
the ideal completion, that is not algebraic. In fact, this dcpo has no compact
elements at all.
\begin{example}[A continuous dcpo that is not algebraic]
  \label{ideal-completion-of-dyadics}

  We inductively define a type and an order representing dyadic rationals
    \(m / 2^n\) in the interval \((-1,1)\) for integers \(m,n\). The~intuition
    for the upcoming definitions is the following. Start with the point \(0\) in
    the middle of the interval (represented by \(\dyadiccenter\) below). Then
    consider the two functions (respectively represented by \(\dyadicleft\) and
    \(\dyadicright\) below)
    \begin{align*}
      l,r &: (-1,1) \to (-1,1) \\
      l(x) &= (x-1)/2 \\
      r(x) &= (x+1)/2
    \end{align*}
    that generate the dyadic rationals. Observe that \(l(x) < 0 < r(x)\) for
    every \(x : (-1,1)\). Accordingly, we inductively define the following
    types.

  \begin{definition}[{Dyadics} \(\mathbb D\)]
    The type of \emph{dyadics} \(\mathbb D : \U_0\) is the inductive type with
    three constructors:
    \[
      \dyadiccenter : \mathbb D \quad \dyadicleft : {\mathbb D \to \mathbb D}
      \quad \dyadicright : {\mathbb D \to \mathbb D}.
    \]
  \end{definition}
  \begin{definition}[Order \(\prec\) on \(\mathbb D\)]
    Let \({\prec} : {\mathbb D \to \mathbb D \to \U_0}\) be inductively defined
    as:
    \begin{alignat*}{6}
      \dyadiccenter & \prec \dyadiccenter && \colonequiv \emptyt &\qquad\qquad
      {\dyadicleft x} & \prec {\dyadiccenter} && \colonequiv \unitt
      &\qquad\qquad
      {\dyadicright x} & \prec {\dyadiccenter} && \colonequiv \emptyt \\
      \dyadiccenter & \prec {\dyadicleft y} && \colonequiv \emptyt &
      {\dyadicleft x} & \prec {\dyadicleft y} && \colonequiv {x \prec y} &
      {\dyadicright x} & \prec {\dyadicleft y} && \colonequiv \emptyt \\
      \dyadiccenter & \prec {\dyadicright y} && \colonequiv \unitt &
      {\dyadicleft x} & \prec {\dyadicright y} && \colonequiv \unitt &
      {\dyadicright x} & \prec {\dyadicright y} && \colonequiv {x \prec y}.
    \end{alignat*}
  \end{definition}
  One then shows that \(\prec\) is proposition-valued, transitive, irreflexive,
  trichotomous, dense and that it has no endpoints. \emph{Trichotomy} means that
  exactly one of \(x \prec y\), \(x = y\), \(y \prec x\) holds. \emph{Density}
  says that for every \(x,y : \mathbb D\), there exists some \(z : \mathbb D\)
  such that \(x \prec z \prec y\). Finally, \emph{having no endpoints} means
  that for every \(x : \mathbb D\), there exist some \(y,z : \mathbb D\) with
  \(y \prec x \prec z\).
  \Omit{
  We can inductively define a type \(\mathbb D\) and an order
  \(\prec\) representing dyadic rationals \(m / 2^n\)
  in the interval \((-1,1)\) for integers \(m,n\). Then we prove that \(\prec\) is
  proposition-valued, transitive, irreflexive, trichotomous, dense and
  that it has no endpoints.
  }
  Using these properties, we can show that
  \((\mathbb D,\prec)\) is a \(\U_0\)-abstract basis. Thus, taking the
  rounded ideal completion, we get
  \(\Idl(\mathbb D,\prec) : \DCPOnum{0}{1}{0}\), which is continuous
  with basis
  \(\principalideal(-) : \mathbb D \to \Idl(\mathbb D,\prec)\) by
  \cref{Idl-is-continuous}.  But \(\Idl(\mathbb D,\prec)\) cannot be
  algebraic, since none of its elements are compact. Indeed suppose
  that we had an ideal \(I\) with \(I \ll I\). By
  \cref{ideals-way-below-in-terms-of-containment}, there would exist
  \(x \in I\) with \(I \order \principalideal{x}\). But this implies
  \(x \prec x\), but \(\prec\) is irreflexive, so this is
  impossible. 
\end{example}

\section{Conclusion and Future Work} \label{conclusion}

We have developed domain theory constructively and predicatively in univalent
foundations, including Scott's \(D_\infty\) model of the untyped
\(\lambda\)-calculus, as well as notions of continuous and algebraic dcpos. We
avoid size issues in our predicative setting by having large dcpos with joins of
small directed families. Often we find it convenient to work with locally small
dcpos, whose orders have small truth values.

In future work, we wish to give a predicative account of the theory of algebraic
and continuous exponentials, which is a rich and challenging topic even
classically. We also intend to develop applications to topology and locale
theory. First steps on formal topology and frames in cubical type
theory~\cite{CohenEtAl2018,CoquandHuberMortberg2018} are developed
in Tosun's thesis~\cite{Tosun2020}, and our notion of continuous dcpo should be
applicable to tackle local compactness and exponentiability.

It is also important to understand when
classical theorems do not have constructive and predicative
counterparts. For instance, Zorn's Lemma doesn't imply excluded middle
but it implies propositional resizing \cite{deJong2020} and we are
working on additional examples.

We have formalized the following in Agda~\cite{TypeTopologyFork}, in addition to
the Scott model of PCF and its computational
adequacy~\cite{deJong2019,Hart2020}:
\begin{enumerate}
\item dcpos,
\item limits and colimits of dcpos, Scott's \(D_\infty\),
\item lifting and exponential constructions,
\item pointed dcpos have subsingleton joins (in the right universe),
\item way-below relation, continuous, algebraic dcpos, interpolation properties,
\item abstract bases and rounded ideal completions (including its universal property),
\item continuous dcpos are continuous retract of their ideal completion, and hence of algebraic dcpos,
\item ideal completion of dyadics, giving an example of a non-algebraic, continuous dcpo.
\end{enumerate}
In the near future we intend to complete our formalization to also include
\Cref{lifting-is-free,lifting-is-free2,lifting-is-free3,powerset-is-algebraic,D-infty-is-algebraic,exponential-locally-small-criterion}.

\bibliography{csl}





\end{document}